\documentclass[12pt]{article}

\usepackage{color}
\usepackage[pdftex]{graphicx}
\usepackage{amsmath,amssymb,amsthm}
\usepackage{wrapfig}
\usepackage{ifpdf}

\newtheorem{thm}{Theorem}[section]
\newtheorem{cor}[thm]{Corollary}
\newtheorem{lemma}[thm]{Lemma}
\newtheorem{prop}[thm]{Proposition}

\newtheorem{remark}{Remark}[section]

\newtheorem{theorem}[thm]{Theorem}

\theoremstyle{definition}

\theoremstyle{plain}

\title {Borsuk--Ulam-type theorem for Stiefel manifolds and orthogonal mass partitions} 

\author {Oleg R. Musin}

\begin{document}

\date{}
\maketitle

\begin{abstract}
We prove a Borsuk--Ulam-type zero theorem for the Stiefel manifold
$V_{n,k}$ equipped with a free action of the hyperoctahedral group
$B_k=(\mathbb{Z}/2)^k\rtimes S_k$.
The existence of a zero reduces to a nonvanishing condition for explicit
polynomials in the truncated ring
$\mathcal{P}_{n,k}=\mathbb{F}_2[a_1,\ldots,a_k]/(a_1^n,a_2^{n-1},\ldots,a_k^{n-k+1})$,
converting a topological problem into finite algebra over~$\mathbb{F}_2$.

As an application we study equipartitions by mutually orthogonal hyperplanes.
We prove that if $(P_{k,n})^m\ne0$ in the partition ring
$\mathcal{R}_{d,k}=\mathbb{F}_2[a_1,\ldots,a_k]/(a_1^{d+1},a_2^{d},\ldots,a_k^{d-k+2})$,
then for any $m$ finite Borel measures in $\mathbb{R}^d$ there exist $k$ mutually
orthogonal hyperplanes such that every $n$-element subfamily partitions each
measure into $2^n$ equal parts.
Let $\Delta^*(m,k,n)$ denote the smallest such dimension~$d$.
We prove lower bounds on $\Delta^*(m,k,n)$ for all $m$, $k$, $n$
via a Sard-theoretic dimension argument, and the nonvanishing condition above
provides algebraic upper bounds.
We establish these bounds in several cases and derive exact values,
including $\Delta^*(2^j-1,k,2)=2^{j-1}(k+1)-1$ for all $j\ge1$, $k\ge2$.
In particular, the MVZ upper bound on $\Delta(m,k)$ \cite{MSZ} is achieved by
mutually orthogonal hyperplanes: orthogonality comes for free.
\end{abstract}


\medskip

\noindent{\bf Keywords:} Borsuk--Ulam theorem, hyperoctahedral group,
Stiefel manifold, ham-sandwich theorem, mass partition,
Walsh functions, Sard's theorem.

\section{Introduction}

\subsection{Overview}

This paper has two main contributions.

\textbf{First}, we prove a Borsuk--Ulam-type zero theorem (Theorem~\ref{thm:main-zero})
for the Stiefel manifold $V_{n,k}$ equipped with the free action of the
hyperoctahedral group $B_k=(\mathbb{Z}/2)^k\rtimes S_k$.
The algebraic criterion takes the form: if
$P=Q_1^s(Q_2\cdots Q_r)^m\ne0$ in the Stiefel truncated ring
$\mathcal{P}_{n,k}=\mathbb{F}_2[a_1,\ldots,a_k]/(a_1^n,\ldots,a_k^{n-k+1})$,
then every continuous $B_k$-equivariant map
$f:V_{n,k}\to(W_1^+)^{\oplus s}\oplus\bigoplus_{j=2}^r(W_j^+)^{\oplus m}$
has a zero.  This is proved cohomologically via a Gysin identity
(Proposition~\ref{prop:universal-gysin}) and a Schur--Vandermonde criterion
(Lemma~\ref{lem:schur-vandermonde}).  Earlier Borsuk--Ulam theorems for manifolds, in particular for
products of spheres, are considered using $G$-index methods in  \cite{BFHZ16,BFHZ18,BK12,CCFH20,FH88,LS25,MSZ,Mus12,MusS,MusVo,MV16,ST23,Z17}.

\textbf{Second}, we apply Theorem~\ref{thm:main-zero} via Theorem~\ref{Pknm}
to study orthogonal mass partitions.
The key idea: Theorem~\ref{Pknm} reduces the geometric problem to algebra,
and the polynomial nonvanishing yields explicit upper bounds on $\Delta^*(m,k,n)$.

Section~2 develops the cohomological proof of Theorem~\ref{thm:main-zero}.
Section~3 introduces the $B_k$-equivariant functions $g_{\mu,I}$ encoding
the partition conditions.
Section~4 proves the bounds on $\Delta^*(m,k,n)$.

\subsection{Borsuk--Ulam theorem for Stiefel manifolds}

The {\em hyperoctahedral group} is
\[
 B_k=A_k\rtimes S_k, \quad A_k=(\mathbb{Z}/2)^k,
\]
where $S_k$ acts by permuting the $k$ sign generators $\lambda_1,\ldots,\lambda_k$.
The one-dimensional real representations of $A_k$ are the characters
$\lambda_I=\sum_{i\in I}\lambda_i$ for $I\subseteq[k]$.
Denote by $\mathbb{R}_{\lambda_I}$ the corresponding sign representation,
and set
\[
 W_j^+=\bigoplus_{|I|=j}\mathbb{R}_{\lambda_I}.
\]
Define the polynomials $Q_j=\prod_{|I|=j}(\sum_{i\in I}a_i)\in\mathbb{F}_2[a_1,\ldots,a_k]$.

The Stiefel manifold $V_{n,k}=\{(u_1,\ldots,u_k)\in(\mathbb{S}^{n-1})^k:\langle u_i,u_j\rangle=\delta_{ij}\}$
carries a free $B_k$-action: $A_k$ changes signs of vectors and $S_k$ permutes them.

The truncated polynomial ring encoding the topology of $V_{n,k}/B_k$ is
\[
 \mathcal{P}_{n,k}=\mathbb{F}_2[a_1,\ldots,a_k]/(a_1^n,a_2^{n-1},\ldots,a_k^{n-k+1}).
\]

\begin{theorem}\label{thm:main-zero}
Let \(n\ge k\), \(s\ge0\), \(2\le r\le k\), and \(m\ge1\).
Assume that
\begin{equation*}
 P=Q_1^s(Q_2Q_3\cdots Q_r)^m\ne0
 \quad\text{in}\quad \mathcal{P}_{n,k}.
\end{equation*}
Then every continuous \(B_k\)-equivariant map
\[
 f:V_{n,k}\longrightarrow (W_1^+)^{\oplus s}
 \oplus\bigoplus_{j=2}^r(W_j^+)^{\oplus m}
\]
has a zero.
\end{theorem}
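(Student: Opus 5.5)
We argue by contradiction via the Euler class of the vector bundle determined by $f$. Suppose $f$ has no zero. Write $E=(W_1^+)^{\oplus s}\oplus\bigoplus_{j=2}^r(W_j^+)^{\oplus m}$. This space is a real representation of $B_k$, since $S_k$ permutes the characters $\lambda_I$ with $|I|=j$ and hence preserves each $W_j^+$. Because $B_k$ acts freely on $V_{n,k}$, we get a vector bundle $\xi_E=V_{n,k}\times_{B_k}E$ over $X=V_{n,k}/B_k$, and the equivariant map $f$ becomes a nowhere-zero section of $\xi_E$. Hence the mod~2 Euler class $w_{\mathrm{top}}(\xi_E)\in H^*(X;\mathbb{F}_2)$ vanishes. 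The goal is to show that $w_{\mathrm{top}}(\xi_E)\ne0$ whenever $P\ne0$ in $\mathcal{P}_{n,k}$.

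To compute this class we first pass to the intermediate cover $Y=V_{n,k}/A_k$, the flag-type manifold of $k$ ordered orthogonal lines in $\mathbb{R}^n$. Let $\pi:Y\to X$ be the $k!$-fold covering. Over $Y$ the bundle $\pi^*\xi_E$ splits into line bundles $L_I=V_{n,k}\times_{A_k}\mathbb{R}_{\lambda_I}$, and $w_1(L_I)=\sum_{i\in I}a_i$, where $a_i=w_1$ of the $i$-th tautological line. Hence $\pi^*w_{\mathrm{top}}(\xi_E)=Q_1^s(Q_2\cdots Q_r)^m=P$.

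Next I will use the standard computation $H^*(Y;\mathbb{F}_2)\cong\mathbb{F}_2[a_1,\dots,a_k]/J$. Iterating projective bundles, $Y$ fibres over $\mathbb{R}P^{n-1}$ with fibre of the same type in $\mathbb{R}^{n-1}$, and so on. This gives an additive basis of monomials $a_1^{e_1}\cdots a_k^{e_k}$ with $e_i\le n-i$. The relations in $J$, however, are complete homogeneous symmetric relations rather than pure powers $a_i^{n-i+1}$. This is exactly where the paper's tools enter. Proposition~\ref{prop:universal-gysin} (the Gysin identity) identifies the relevant quotient, and Lemma~\ref{lem:schur-vandermonde} shows that a polynomial that is \emph{symmetric} in $a_1,\dots,a_k$ is nonzero in $H^*(Y)$ if and only if it is nonzero in the truncated ring $\mathcal{P}_{n,k}$. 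Since $P$ is symmetric, the hypothesis $P\neq0$ in $\mathcal{P}_{n,k}$ gives $\pi^*w_{\mathrm{top}}(\xi_E)\ne0$, and therefore $w_{\mathrm{top}}(\xi_E)\neq0$ by naturality. This contradicts the existence of the nonvanishing section.

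The main obstacle is this last transfer of nonvanishing from $\mathcal{P}_{n,k}$ to $H^*(Y)$. Reducing a symmetric polynomial modulo the pure powers need not agree with reducing it modulo the true flag relations. I would handle it through a top-degree pairing. Multiply $P$ by a complementary monomial $a^{\delta}$ so that the product lands in top degree. Then evaluate via the iterated Gysin/pushforward $\pi_*$ down to a point. This pushforward is given by a divided-difference (Schur--Vandermonde) formula, and it should agree with extracting the coefficient of $a_1^{n-1}\cdots a_k^{n-k}$ in $\mathcal{P}_{n,k}$. Nonvanishing of $P$ in $\mathcal{P}_{n,k}$ supplies some monomial $a^\delta$ with nonzero pairing, and hence $P\neq0$ in $H^*(Y)$. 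I take Proposition~\ref{prop:universal-gysin} and Lemma~\ref{lem:schur-vandermonde} to provide precisely this identification; the rest of the argument is the routine Euler class bookkeeping above.
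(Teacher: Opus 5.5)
You correctly reduce the problem to a nowhere-zero section of $\xi_E$, and $\pi^*w_{\mathrm{top}}(\xi_E)=P$ in $H^*(Y)$ is also correct. The step that carries all the weight is false, however: a symmetric polynomial can be nonzero in $\mathcal{P}_{n,k}$ and yet zero in $H^*(Y)$.

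Every symmetric polynomial in $a_1,\dots,a_k$ is, in $H^*(Y)$, the pullback of the corresponding class under $Y\to\mathrm{Gr}_k(\mathbb{R}^n)$; this pullback is injective by Leray--Hirsch. That group vanishes above degree $k(n-k)$, while $\deg P$ can be as large as $\dim X=k(n-k)+\binom{k}{2}$.

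The smallest counterexample is $n=k=2$, $P=Q_2=a_1+a_2$ (Corollary~\ref{corVkk}). Here $Y=O(2)/A_2\cong S^1$ with $H^*(Y)=\mathbb{F}_2[a_1,a_2]/(a_1^2,\,a_1+a_2)$, so $P=0$ in $H^*(Y)$. Yet $P=a_1\ne0$ in $\mathcal{P}_{2,2}=\mathbb{F}_2[a_1,a_2]/(a_1^2,a_2)$, and the conclusion of the theorem does hold.

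The underlying reason is that $Y\to X$ is a covering with $k!$ sheets, an even number for $k\ge2$. So mod-$2$ pullback to $Y$ need not be injective (the transfer only gives $\mathrm{tr}\circ\pi^*=k!=0$), and here it kills $w_{\mathrm{top}}(\xi_E)$ exactly in the cases of interest. Your proposed top-degree pairing repair fails for the same reason. Integration over $Y$ is not extraction of the coefficient of $a_1^{n-1}\cdots a_k^{n-k}$: already for $n=k=2$ one has $\int_Y a_2=1$, while $a_2=0$ in $\mathcal{P}_{2,2}$.

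You also misread the two results you cite. They relate $Q_2R$ in $\mathcal{P}_{n,k}$ to $R(\gamma_k)$, so the factor $Q_2$ is shifted, not preserved. Combined with Leray--Hirsch, Lemma~\ref{lem:schur-vandermonde} says that a symmetric $F$ is nonzero in $H^*(Y)$ if and only if $Q_2F\ne0$ in $\mathcal{P}_{n,k}$. For your argument you would therefore need $Q_2P\ne0$, a strictly stronger condition that fails, for instance, whenever $\deg P>k(n-k)$.

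The missing idea is to stay on $X=V_{n,k}/B_k$ and push forward along $X\to\mathrm{Gr}_k(\mathbb{R}^n)$, whose fiber is $O(k)/B_k$ rather than the flag manifold $O(k)/A_k$. Write $P=Q_2R$. The factor $\bar e_2$ is then absorbed by fiber integration, $\pi_!(\bar e_2\bar e_R)=R(\gamma_k)$ (Proposition~\ref{prop:universal-gysin}, proved by $A_k$-localization, where $x_0$ is the only fixed point at which $e_2$ does not vanish). Lemma~\ref{lem:schur-vandermonde} then turns $Q_2R\ne0$ in $\mathcal{P}_{n,k}$ into $R(\gamma_k)\ne0$, and hence $w_{\mathrm{top}}(\xi_E)\ne0$ on $X$ itself.
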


\medskip

In characteristic two, \(Q_2\) is the Vandermonde determinant:
\[
 Q_2=\det(a_i^{j-1})_{i,j=1}^k
 =\sum_{\sigma\in S_k}\prod_{i=1}^k a_i^{\sigma(i)-1}.
\]
Modulo the defining ideal of $\mathcal P_{k,k}$, the only surviving
term is $a_1^{k-1}a_2^{k-2}\cdots a_{k-1}^1$ (the monomial $a_k^0$ contributes trivially). Hence
$Q_2\ne0$ in $\mathcal P_{k,k}$, and Theorem~\ref{thm:main-zero} gives:

\begin{cor} \label{corVkk} Every continuous \(B_k\)-equivariant map
\[
 f:V_{k,k}\longrightarrow W_2^+
 =\bigoplus_{1\le i<j\le k}\mathbb{R}_{\lambda_i+\lambda_j}
\]
has a zero.
\end{cor}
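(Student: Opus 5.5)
We obtain Corollary~\ref{corVkk} as the special case of Theorem~\ref{thm:main-zero} with $n=k$, $s=0$, $r=2$ and $m=1$. The hypotheses $n\ge k$, $s\ge0$, $2\le r\le k$, $m\ge1$ then hold as soon as $k\ge2$. For $k=1$ the target $W_2^+$ is the zero space, so every map trivially has a zero and there is nothing to prove. With these parameters the target of Theorem~\ref{thm:main-zero} is $(W_1^+)^{\oplus0}\oplus W_2^+=W_2^+=\bigoplus_{i<j}\mathbb{R}_{\lambda_i+\lambda_j}$. The polynomial in the hypothesis is $P=Q_2$, so the only thing to verify is that $Q_2\ne0$ in $\mathcal P_{k,k}=\mathbb{F}_2[a_1,\ldots,a_k]/(a_1^k,a_2^{k-1},\ldots,a_k^{1})$.

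For this, I would first identify $Q_2=\prod_{i<j}(a_i+a_j)$ with the Vandermonde determinant $\prod_{i<j}(a_j-a_i)$, which is legitimate in characteristic two. Expanding,
\[
 Q_2=\sum_{\sigma\in S_k}\prod_{i=1}^k a_i^{\sigma(i)-1}.
\]
Every monomial here is distinct, and since the monomials not lying in a monomial ideal form a basis of the quotient, it suffices to find a single term that survives the truncation. A term $\prod_i a_i^{\sigma(i)-1}$ survives in $\mathcal P_{k,k}$ exactly when $\sigma(i)-1\le k-i$ for every $i$, that is, $\sigma(i)\le k+1-i$.

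I would then show by downward induction on $i$ that the only such permutation is the reversal. For $i=k$ the constraint forces $\sigma(k)=1$. For $i=k-1$ it gives $\sigma(k-1)\le2$, and since $1$ is already used, $\sigma(k-1)=2$. Continuing in the same way gives $\sigma(i)=k+1-i$ for all $i$.

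Hence the image of $Q_2$ in $\mathcal P_{k,k}$ is the single basis monomial $a_1^{k-1}a_2^{k-2}\cdots a_{k-1}$, which is nonzero. Theorem~\ref{thm:main-zero} then gives the zero. None of these steps is a real obstacle; the only care needed is the bookkeeping of the exponent bounds $a_i^{k-i+1}=0$ in $\mathcal P_{k,k}$, in particular that $a_k$ must appear with exponent $0$.
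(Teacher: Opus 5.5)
Your proposal is correct and follows the paper's own derivation. You specialize Theorem~\ref{thm:main-zero} to $n=k$, $s=0$, $r=2$, $m=1$, then verify $Q_2\ne0$ in $\mathcal P_{k,k}$ by expanding the Vandermonde determinant mod~2: only the reversal permutation, giving $a_1^{k-1}a_2^{k-2}\cdots a_{k-1}$, survives the caps. Your downward induction and your separate treatment of $k=1$ (where the hypothesis $r=2\le k$ fails but $W_2^+=0$) just make explicit details the paper leaves implicit.
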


A direct proof of this corollary is given in~\cite{4eqpartn}.


\subsection{Hyperplane mass partitions.}

The well-known {\em ham--sandwich theorem} states:

\medskip 

\noindent {\em For every three--dimensional sandwich made from three
ingredients, there is a planar cut that simultaneously divides each
ingredient in half; thus the sandwich can be fairly divided between
two guests using a single straight cut.}

\medskip 

This theorem was proposed by Steinhaus and proved by Banach, for details see \cite{BZ}.  Stone \& Tukey \cite{ST} proved the $d$--dimensional version of the theorem in a more general setting involving measures.

Mass partition theorems are usually stated in one of two settings: discrete or continuous. The continuous versions deal with measures in $\mathbb R^d$. In this paper, unless explicitly stated otherwise, all measures are finite Borel measures such that every hyperplane has measure zero; in particular, one may take finite measures that are absolutely continuous with respect to Lebesgue measure. See \cite[Sec. 3.1]{Mat}.
(A measure $\mu$ on $\mathbb R^d$ is called {finite} if  $0<\mu(\mathbb R^d)<\infty$.) 

We say that a hyperplane $H$ {\em bisects} $\mu$ (or {\em divides $\mu$ in half}\,) if 
$$
\mu(H^+)=\frac{1}{2}\,\mu(\mathbb R^d),
$$
where $H^+$ denotes one of the half--spaces defined by $H$. 

\medskip

\noindent {\bf  Ham Sandwich Theorem. }  {\em Let $\mu_1,...,\mu_d$ be measures on  $\mathbb R^d$. Then there exists a  hyperplane that simultaneously bisects all $d$ measures.}

\medskip

A discrete version of this theorem is the following; see \cite[Theorem 3.1.2]{Mat}. A hyperplane bisects a finite set if each of its two open half--spaces contains at most half of the points of the set.

\medskip 

\noindent {\bf Discrete Ham Sandwich Theorem.}  
{\em Let   $X_1,...,X_d$ be finite sets in $\mathbb R^d$. Then there exists a hyperplane that simultaneously bisects $X_1,...,X_d$.}

\medskip

Let $\Delta(m,k)$ denote the minimal dimension $d$ of Euclidean space such that for any set of $m$ measures as above in $\mathbb R^d$ there exist $k$ hyperplanes that divide each measure into $2^k$ parts of equal size. There are lower and upper bounds for this number:
\begin{equation}\tag{1.1}
\left\lceil{\left(\frac{2^k-1}{k}\right)m}\right\rceil \le \Delta(m,k) \le m+(2^{k-1}-1)2^{\lfloor \log_2 m \rfloor}.
\end{equation}
The lower bound was proved in 1996 by Ramos \cite{Ramos}, and this upper bound was obtained in 2006 by  Mani-Levitska, Vre\' cica, and \v Zivaljevi\' c \cite{MSZ}. The only instance in which lower and upper bounds coincide is in the case when
$k=1$ or $k = 2$ and $m = 2^j-1$, $j=1,2,...$

\medskip 

\subsection{Orthogonal mass partitions}

Let 
$$
P_{k,n}:=Q_1 Q_2\cdots Q_n\in \mathbb F_2[a_1,...,a_k], \quad 1\le n\le k. 
$$

The following theorem is a corollary of Theorem \ref{thm:main-zero}. 

\begin{thm} \label{Pknm} Let $1\le n\le k\le d$ and $m\ge 1$.  Suppose
$$
(P_{k,n})^m \ne 0
\quad\text{in}\quad
\mathcal{R}_{d,k}
:= \mathbb{F}_2[a_1,\ldots,a_k]\big/(a_1^{d+1}, a_2^{d},
\ldots, a_k^{d-k+2}).
$$
\textup{(}Note: $\mathcal{R}_{d,k}$ is distinct from the Stiefel ring
$\mathcal{P}_{n,k}$; the caps in $\mathcal{R}_{d,k}$ start at $d+1$,
one higher than in $\mathcal{P}_{d,k}$ would suggest.\textup{)}
Then for any $m$ measures in $\mathbb R^d$ there exist $k$ mutually
orthogonal hyperplanes with every $n$-element subset dividing each
measure into $2^n$ equal parts.
\end{thm}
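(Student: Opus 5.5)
We derive Theorem~\ref{Pknm} from Theorem~\ref{thm:main-zero} with the Stiefel manifold $V_{d+1,k}$, i.e.\ with $n$ replaced by $d+1$. Then $\mathcal{P}_{d+1,k}=\mathbb{F}_2[a_1,\ldots,a_k]/(a_1^{d+1},a_2^{d},\ldots,a_k^{d-k+2})$, which is exactly $\mathcal{R}_{d,k}$; this explains the shifted caps in the remark. The nonvanishing hypothesis $(P_{k,n})^m=(Q_1Q_2\cdots Q_n)^m\ne0$ is the hypothesis of Theorem~\ref{thm:main-zero} with $s=m$ and $r=n$. The case $n=1$ (where $r=1$ is formally excluded) is handled by the same cohomological argument with only $Q_1^m$, or directly by a ham-sandwich-type argument. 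It therefore suffices to build a continuous $B_k$-equivariant map
\[
 f:V_{d+1,k}\to\bigoplus_{j=1}^n (W_j^+)^{\oplus m}
\]
whose zeros give the required orthogonal hyperplanes.

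\textbf{Parametrization.} A point $u_i=(v_i,t_i)\in\mathbb{S}^{d}\subset\mathbb{R}^d\times\mathbb{R}$ with $v_i\neq0$ determines the oriented affine hyperplane $H_i=\{x:\langle v_i,x\rangle=t_i\}$ with positive side $H_i^+=\{\langle v_i,x\rangle>t_i\}$. Changing $u_i\mapsto-u_i$ swaps $H_i^+$ and $H_i^-$. However, orthogonality of $u_1,\ldots,u_k$ in $\mathbb{R}^{d+1}$ means $\langle v_i,v_j\rangle+t_it_j=0$, which is not orthogonality of the normals $v_i$.

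To fix this I would use a modified embedding. Take $u_i=(v_i,t_i)$ with $v_i\in\mathbb{R}^d$, and let the hyperplanes have normals obtained from the $v_i$ by a $B_k$-equivariant, $O(d)$-compatible Gram--Schmidt step. This is well defined when the $v_i$ are linearly independent, and a limiting argument (hyperplanes at infinity) handles the degenerate points. The alternative I would try first is to restrict to the sub-Stiefel manifold $V_{d,k}\times$ (offsets). Then I would argue via the Gysin/degree computation that the relevant class in $\mathcal{R}_{d,k}$ is still the obstruction, because the extra $a_i$-cap by one reflects the $k$ free offset parameters, each $t_i$ transforming by the sign $\lambda_i$. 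I expect this step to be the main obstacle: reconciling the geometry (orthogonal normals plus free offsets) with a free $B_k$-space whose truncated ring is $\mathcal{R}_{d,k}$. The cleanest route would be to note that $V_{d,k}\times\mathbb{R}^k$, with $\mathbb{R}^k=W_1^+$, behaves cohomologically like $V_{d+1,k}$ for the top-class computation. Then I would rerun the proof of Theorem~\ref{thm:main-zero} with the extra $Q_1$-type Euler factor coming from the offsets.

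\textbf{Test functions.} For a measure $\mu$ and a nonempty $I\subseteq[k]$, set
\[
 g_{\mu,I}=\int\prod_{i\in I}\operatorname{sgn}\bigl(\langle v_i,x\rangle-t_i\bigr)\,d\mu(x),
\]
the Walsh-function integral. Flipping $u_i$ multiplies $g_{\mu,I}$ by $(-1)^{[i\in I]}$, and permutations permute the indices $I$. Hence $(g_{\mu,I})_{|I|=j}$ is an equivariant map to $W_j^+$. Collecting these over $\mu_1,\ldots,\mu_m$ and $1\le|I|\le n$ gives $f$. Continuity follows because hyperplanes have measure zero. At points where some $v_i=0$, the integrand is constant sign $\pm1$; these must be shown not to be zeros of $f$, or handled by the compactification.

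\textbf{Concluding the partition.} At a zero of $f$, fix an $n$-element subset $J\subseteq[k]$. The vanishing of $g_{\mu,I}$ for all nonempty $I\subseteq J$ means every nontrivial Walsh function of the $2^n$ orthants cut out by $\{H_i\}_{i\in J}$ integrates to zero. By Fourier inversion on $(\mathbb{Z}/2)^n$, all $2^n$ cells then have measure $\mu(\mathbb{R}^d)/2^n$. This completes the proof, modulo the identification of the parameter space discussed above.
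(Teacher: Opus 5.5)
\textbf{There is a genuine gap, and it is the step you flag yourself.} You never produce a compact free $B_k$-space, with an equivariant map to which Theorem~\ref{thm:main-zero} applies, whose zeros are configurations with mutually orthogonal normals.

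Applying Theorem~\ref{thm:main-zero} on $V_{d+1,k}$ with $s=m$, $r=n$ proves a different statement: its zeros give normals with $\langle v_i,v_j\rangle=-t_it_j$. So the identity $\mathcal P_{d+1,k}=\mathcal R_{d,k}$ is a coincidence here, not the mechanism.

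Your repairs do not close this.
\begin{itemize}
\item Ordinary Gram--Schmidt is not $S_k$-equivariant.
\item The symmetric version $V\mapsto V(I-tt^{T})^{-1/2}$ does identify the open set $\{|t|<1\}\subset V_{d+1,k}$ equivariantly with $V_{d,k}\times W_1^+$, which supports your intuition. But the Walsh map built this way has no continuous extension to frames whose span contains $e_{d+1}$: there some offsets blow up while other normals or offsets oscillate.
\item Hence Theorem~\ref{thm:main-zero} on the compact manifold cannot be invoked. A nonzero Euler class on $V_{d+1,k}/B_k$ does not by itself force a zero inside the open part.
\item The claim that $V_{d,k}\times\mathbb R^k$ ``behaves cohomologically like $V_{d+1,k}$'' is likewise unproved for a noncompact space.
\end{itemize}
As written, your argument ends ``modulo'' its main step.

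\textbf{The missing idea is to eliminate the offsets rather than add a dimension.}
\begin{itemize}
\item For measures with smooth positive densities, the median offset $h_1(u)$ of $\mu_1$ in direction $u$ is unique, continuous and odd.
\item Setting $t_i=h_1(u_i)$ is $B_k$-equivariant and solves the $k$ singleton bisection equations of $\mu_1$ identically. This absorbs one copy of $W_1^+$ into the domain.
\item What remains is an equivariant map on the compact $V_{d,k}$ into $(W_1^+)^{\oplus(m-1)}\oplus\bigoplus_{j=2}^n(W_j^+)^{\oplus m}$, with Euler polynomial $R=Q_1^{m-1}(Q_2\cdots Q_n)^m$.
\item Since $(P_{k,n})^m=Q_1R$ and multiplying by $Q_1=a_1\cdots a_k$ raises every exponent by exactly one, $(P_{k,n})^m\ne0$ in $\mathcal R_{d,k}$ is equivalent to $R\ne0$ in $\mathcal P_{d,k}$. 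This is the real source of the shifted caps.
\item Theorem~\ref{thm:main-zero}, with $(n,s,r)$ replaced by $(d,m-1,n)$, then gives the zero. Your Walsh-inversion paragraph finishes the argument.
\item General admissible measures need an approximation step, because the median map need not be continuous otherwise. Convolve with small Gaussians, note that the offsets stay uniformly bounded, and pass to a limit using that the limit hyperplanes are null.
\end{itemize}
Your $V_{d,k}\times W_1^+$ idea could also be made rigorous by a Thom-isomorphism argument, using that $g_{\mu_1,\{i\}}$ has sign $-\operatorname{sgn}t_i$ for large $|t_i|$. It would land on the same criterion, $R\ne0$ in $\mathcal P_{d,k}$.

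Finally, for $n=1$ Theorem~\ref{thm:main-zero} does not apply: it requires $r\ge2$, and its proof splits off a factor $Q_2$. So cite Proposition~\ref{prop:n1} there rather than ``the same cohomological argument.''
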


Let $\Delta^*(m,k)$ denote the minimal dimension $d$ such that for any
$m$ measures as above in $\mathbb R^d$ there exist $k$
\emph{mutually orthogonal} hyperplanes dividing each measure into
$2^k$ equal parts.  Since orthogonality is an additional constraint,
$\Delta^*(m,k)\ge\Delta(m,k)$.  Our next result shows that the MVZ upper bound is achieved by orthogonal hyperplanes as well. It follows from Theorem \ref{Pknm}. 

\begin{thm}\label{th13}
$$
\left\lceil\frac{(2^k-1)\,m}{k}+\frac{k-1}{2}\right\rceil
\;\le\; \Delta^*(m,k) \;\le\; m+(2^{k-1}-1)\,2^{\lfloor\log_2 m\rfloor}.
$$
The lower and upper bounds coincide in two cases:
\begin{align*}
&(i)\quad k=2,\; m=2^j-1:\quad
 \Delta^*(2^j-1,2)=\Delta(2^j-1,2)=3\cdot2^{j-1}-1;\\
&(ii)\quad k=2,\; m=2^j-2:\quad
 \Delta^*(2^j-2,2)=3\cdot2^{j-1}-2.
\end{align*}
\end{thm}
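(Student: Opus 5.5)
Theorem~\ref{th13} has three parts: an upper bound, a lower bound, and two cases where they coincide. The plan is to get the upper bound algebraically from Theorem~\ref{Pknm}, the lower bound by a dimension count, and then to check the two coincidences.

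\textbf{Upper bound.} We apply Theorem~\ref{Pknm} with $n=k$. Then $P_{k,k}=Q_1Q_2\cdots Q_k$ is the product of all nonzero linear forms $\sum_{i\in I}a_i$. This is the Dickson-type polynomial, which equals the top Dickson invariant $\det(a_i^{2^{j-1}})$.

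Put $d=m+(2^{k-1}-1)2^{q}$ with $q=\lfloor\log_2 m\rfloor$. We must show $(P_{k,k})^m\ne0$ in $\mathcal R_{d,k}$. Write $m=2^q+r$ with $0\le r<2^q$. In characteristic two, $(P_{k,k})^{2^q}$ is the Dickson determinant with every exponent multiplied by $2^q$. Its monomials have the form $\prod_i a_i^{2^q\cdot 2^{\sigma(i)-1}}$, and each appears with coefficient $1$.

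Following the MVZ computation, we choose a distinguished monomial and show it survives in $(P_{k,k})^{2^q}(P_{k,k})^{r}$ modulo the caps $a_i^{d+2-i}$. For $a_1$ we take the largest Dickson exponent $2^q\cdot 2^{k-1}$ together with a suitable share of the degree $r(2^k-1)$. For the remaining variables we descend the exponents, checking that the total degree $m(2^k-1)$ fits under the caps $\sum_i(d+1-i)$.

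The main obstacle is the odd coefficient count for this monomial. We would attempt it by Lucas' theorem: write $(P_{k,k})^m=(P_{k,k})^{2^q}\cdot(P_{k,k})^r$ and note that the binary digits of the two factors do not interact. Compared with the MVZ ring, the caps here decrease by one per variable. The hope is that the unique extremal monomial uses $a_i$-exponents at most $d+1-i$, which is exactly why "orthogonality comes for free." If that fails, we use the reduction $(P_{k,k})^m\ne0$ in $\mathcal R_{d,k}$ $\Leftarrow$ a leading-term argument in lex order. The lex-leading term of each Dickson factor is $a_1^{2^{k-1}}a_2^{2^{k-2}}\cdots a_k$, and the leading term of a product is the product of the leading terms. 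It then remains to check, or to adjust via the staircase, that these exponents sit below the caps.

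\textbf{Lower bound.} We use a Sard/dimension count. The space of $k$ mutually orthogonal affine hyperplanes has dimension $\dim V_{d,k}+k=dk-\binom{k+1}{2}+k=dk-\binom{k}{2}$. Equipartitioning one measure into $2^k$ parts imposes $2^k-1$ independent conditions. We take $m$ measures in generic position, for instance finite sums of small balls approximating points, so that the partition map is transverse. Sard's theorem then says a solution can exist only if
\[
dk-\tbinom{k}{2}\ge m(2^k-1).
\]
This is equivalent to $d\ge \frac{(2^k-1)m}{k}+\frac{k-1}{2}$, and taking the ceiling gives the stated bound. The genericity and transversality argument must be made rigorous; the standard approach perturbs the measure densities so that the evaluation map $(\text{hyperplanes},\text{measures})\to\mathbb R^{m(2^k-1)}$ is a submersion, and then applies parametric transversality.

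\textbf{Equality cases.} Set $k=2$.

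For (i), $m=2^j-1$. The lower bound is $\lceil 3m/2+1/2\rceil=\lceil (3\cdot2^j-2)/2\rceil=3\cdot2^{j-1}-1$. The upper bound is $2^j-1+2^{j-1}=3\cdot 2^{j-1}-1$, so the two agree.

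For (ii), $m=2^j-2$. The lower bound is $\lceil (3\cdot 2^j-5)/2\rceil=3\cdot2^{j-1}-2$. The MVZ upper bound gives $2^j-2+2^{j-2}$ when $j\ge 2$, which is too weak. So here we apply Theorem~\ref{Pknm} directly with $d=3\cdot2^{j-1}-2$. We need $(a_1a_2(a_1+a_2))^{m}\ne0$ modulo $(a_1^{d+1},a_2^{d})$. The top degree $2d-1=3m$ matches exactly, so it suffices to check that the coefficient of $a_1^{d}a_2^{d-1}$ is odd. That coefficient is $\binom{m}{d-m}=\binom{2^j-2}{2^{j-1}}$, which is odd by Lucas' theorem since $2^{j-1}$ is a binary digit of $2^j-2$. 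For (i) the analogous coefficient is $\binom{2^j-1}{\cdot}$, which is always odd, giving a direct check as well.
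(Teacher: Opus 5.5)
Your lower bound is the same Sard/dimension count as the paper's Lemma~\ref{L41} with $n=k$ (so $\alpha_k(k)=2^k-1$), and your arithmetic in case (i) is right.

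\textbf{The upper bound.} The genuine gap is the upper bound for $m=t+r$ with $t=2^{\lfloor\log_2 m\rfloor}$ and $0<r<t$. You never exhibit a monomial of $(P_{k,k})^m$ that has odd coefficient and exponents satisfying $e_i\le d-i+1$, and neither mechanism you offer produces one.

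Your lex-leading fallback $(a_1^{2^{k-1}}a_2^{2^{k-2}}\cdots a_k)^m$ has $a_1$-exponent $m2^{k-1}$. This exceeds the cap $d=m+(2^{k-1}-1)t$ as soon as $r>0$, so it only covers $m=2^q$ (the paper's Lemma~\ref{L:pow2}).

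The heuristic that the binary digits of $(P_{k,k})^t$ and $(P_{k,k})^r$ ``do not interact'' is false. Take $k=3$, $m=3$, $d=9$. The natural witness $(a_1^4a_2^2a_3)^2(a_1a_2^2a_3^4)=a_1^9a_2^6a_3^6$ arises from two pairs of exponent vectors in $D_3^2\cdot D_3$, namely $(8,4,2)+(1,2,4)$ and $(8,2,4)+(1,4,2)$. So its coefficient is even.

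The paper's Lemma~\ref{L:nk} uses exactly this monomial $M=(a_1^{2^{k-1}}\cdots a_k)^t(a_1a_2^2\cdots a_k^{2^{k-1}})^r$ and takes its nonvanishing from MVZ. It then checks the staircase inequalities $t2^{k-i}+r2^{i-1}\le d-i+1$, which is what makes orthogonality ``free''; you do not carry this out either. The example shows that the coefficient claim itself needs a real argument. In that instance nonvanishing in $\mathcal R_{9,3}$ is rescued by a different monomial, $a_1^9a_2^8a_3^4$, whose coefficient is $1$ because it arises from $(8,4,2)+(1,4,2)$ alone. For $k=2$ the witness $M$ is fine: its coefficient is $\binom{t+r}{t}$, which is odd by Lucas.

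So the step you yourself labelled the main obstacle is exactly what is missing. You need a specific witness valid for all $k$ and $r$, an honest parity count of its coefficient, and the cap verification.

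\textbf{Case (ii).} For $j\ge2$ one has $\lfloor\log_2(2^j-2)\rfloor=j-1$. So the general upper bound already equals $2^j-2+2^{j-1}=3\cdot2^{j-1}-2$ and meets the lower bound; this is how (ii) follows in the paper. Your value $2^j-2+2^{j-2}$ would lie below the lower bound, so it is not ``too weak.''

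Your direct check also misstates the degree. Since $3m=2d-2$, the monomial $a_1^da_2^{d-1}$ does not occur in $(P_{2,2})^m$. The coefficient $\binom{m}{d-m}=\binom{2^j-2}{2^{j-1}}$ you computed is that of $a_1^da_2^{d-2}$, which does survive the caps, so this part is repairable.

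\textbf{Case (i).} The equality $\Delta(2^j-1,2)=\Delta^*(2^j-1,2)$ needs the one-line remark $\lceil 3m/2\rceil\le\Delta\le\Delta^*$, using Ramos' lower bound.
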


The upper bound coincides with the MVZ bound~(1.1): orthogonality of
the $k$ hyperplanes comes for free at the MVZ dimension.

\medskip

More generally, let $\Delta^*(m,k,n)$ denote the minimal dimension $d$
such that for any $m$ measures in $\mathbb R^d$ there exist $k$ mutually
orthogonal hyperplanes with every $n$-element subset dividing each
measure into $2^n$ equal parts.  Clearly $\Delta^*(m,k,k)=\Delta^*(m,k)$.
The non-orthogonal variant was introduced in~\cite{BK12}; bounds for
$n=2,3$ appear in~\cite{Mejia25}.


The case $n=1$ satisfies $\Delta^*(m,k,1)=m+k-1$ (proved in Section~4).

The algebraic forward doubling inequality $d^*(2m,k,n)\le 2\,d^*(m,k,n)$
(proved in Section~4) gives, by induction on $q$, $d^*(2^q m,k,n)\le 2^q d^*(m,k,n)$
for all $q\ge0$.  In particular, since $d^*(3,4,3)=12$ (verified by direct computation),
$$
\Delta^*(3\cdot2^q,\,4,\,3)\;\le\;d^*(3\cdot2^q,4,3)\;\le\;2^q\cdot12=12\cdot2^q,
\qquad q=0,1,2,\ldots,
$$
which improves the general bound~(1.4) giving $15\cdot2^q$.

\medskip

Combining the lower bound with the upper bound (proved in Section~4):

\begin{thm}\label{th15}
Let $\alpha_n(k)=\sum_{i=1}^n\binom{k}{i}$ and
$\beta_n(k)=\sum_{i=0}^{n-1}\binom{k-1}{i}$.
For all $2\le n\le k$ and $m\ge1$,
\begin{equation}\tag{1.3}
\left\lceil\frac{m\,\alpha_n(k)}{k}+\frac{k-1}{2}\right\rceil
\;\le\;
\Delta^*(m,k,n).
\end{equation}
The upper bound
\begin{equation}\tag{1.4}
\Delta^*(m,k,n)\;\le\; m+(\beta_n(k)-1)\,2^{\lfloor\log_2 m\rfloor}
\end{equation}
is established in the following cases (see Section~4):
\begin{enumerate}
\item[\rm(i)] $n=k$, any $m\ge1$ (via Dickson polynomials);
\item[\rm(ii)] $n=2$, any $k\ge2$, any $m\ge1$ (via Dyson's identity and~\cite{Mejia25});
\item[\rm(iii)] $m=2^q$ a power of two, any $2\le n\le k$ (via the Frobenius endomorphism).
\end{enumerate}
For $2<n<k$ and $m$ not a power of two, bound~(1.4) is
computationally verified for $k\le5$, $m\le11$, but remains open.
The upper bound is not always optimal: for instance,
$\Delta^*(3,4,3)=12$ while~(1.4) gives $15$.
\end{thm}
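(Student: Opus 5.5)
The plan is to prove the two halves of Theorem~\ref{th15} by different methods. The lower bound~(1.3) comes from a dimension count made rigorous by Sard's theorem. The upper bound~(1.4) comes from Theorem~\ref{Pknm}, which reduces it to exhibiting a single monomial with odd coefficient in $(P_{k,n})^m$ that survives in $\mathcal R_{d,k}$.

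\textbf{Lower bound.} A configuration of $k$ mutually orthogonal oriented affine hyperplanes in $\mathbb R^d$ is a point $(u_1,\dots,u_k)\in V_{d,k}$ together with $k$ offsets $t_1,\dots,t_k\in\mathbb R$. The configuration space therefore has dimension
\[
dk-\tfrac{k(k+1)}{2}+k=dk-\tfrac{k(k-1)}{2}.
\]
For a measure $\mu$, requiring every $n$-subfamily to cut $\mu$ into $2^n$ equal parts is equivalent to the vanishing of the Walsh integrals
\[
\int\prod_{i\in I}\operatorname{sign}(\langle u_i,x\rangle-t_i)\,d\mu
\qquad\text{for all } \emptyset\ne I\subseteq[k],\ |I|\le n.
\]
This gives $\alpha_n(k)$ equations per measure, so $m\alpha_n(k)$ in total. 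I would choose $m$ measures with smooth densities lying in a finite-dimensional family, for instance bump functions with variable centres and weights. The family should be rich enough that the total evaluation map
\[
F:(\text{configurations})\times(\text{parameters})\to\mathbb R^{m\alpha_n(k)}
\]
is a submersion at its zeros. Parametric transversality, a consequence of Sard's theorem, then shows that for generic parameters the zero set is a manifold of dimension $dk-k(k-1)/2-m\alpha_n(k)$. It is empty whenever this number is negative, and that yields~(1.3). The technical points are checking the submersion property and handling non-compactness in the offsets; the latter is harmless because solutions must have all $t_i$ bounded by the supports of the measures.

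\textbf{Upper bound.} Put $d=m+(\beta_n(k)-1)2^{\lfloor\log_2 m\rfloor}$. By Theorem~\ref{Pknm} it suffices to find a monomial $a_1^{e_1}\cdots a_k^{e_k}$ with $e_i\le d-i+1$ that has odd coefficient in $(P_{k,n})^m$. I would treat the three cases as follows.
\begin{enumerate}
\item[(iii)] For $m=2^q$, the Frobenius endomorphism gives $P^{2^q}=\sum c_e a^{2^q e}$. It therefore suffices to prove a base lemma: $P_{k,n}$ has a monomial with odd coefficient whose exponents satisfy $2^q e_i\le 2^q\beta_n(k)-i+1$, for example $e_i\le \beta_n(k)-\lceil (i-1)/2^q\rceil$. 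I would obtain this by expanding $Q_1\cdots Q_n$ with respect to the lexicographic order and identifying its leading term inductively in $n$, using the recursion $\beta_n(k)=\beta_n(k-1)+\beta_{n-1}(k-1)$ obtained by splitting off the variable $a_k$.
\item[(i)] For $n=k$, $P_{k,k}$ is the product of all nonzero linear forms in $a_1,\dots,a_k$, which is the top Dickson invariant. Its explicit expansion as a sum over flags, equivalently its description as a Moore determinant, lets one track a surviving monomial in $P_{k,k}^m$. I would write $m$ in binary as $m=\sum_j 2^{q_j}$, take the product over $j$ of the Frobenius-twisted leading terms, and check that this monomial arises in only one way.
\item[(ii)] For $n=2$ we have $P_{k,2}=a_1\cdots a_k\cdot Q_2$, where $Q_2$ is the Vandermonde determinant. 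Dyson's identity controls the coefficients of powers of the Vandermonde, and the parity computation of~\cite{Mejia25} then supplies the required odd coefficient.
\end{enumerate}

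\textbf{Main obstacle.} I expect the hardest step to be showing, for $m$ not a power of two, that the product of Frobenius-twisted leading terms has odd total coefficient. Other choices of terms from the different binary pieces could produce the same monomial and cancel it modulo $2$. My first attempt would be a lexicographic argument, showing that the chosen monomial is the unique lex-maximal monomial of $(P_{k,n})^m$ that survives the truncation.
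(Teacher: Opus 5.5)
Your lower bound and your case (iii) follow the paper's own arguments. The lower bound is Sard's theorem on $V_{d,k}\times\mathbb R^k$, of dimension $dk-k(k-1)/2$, against $m\alpha_n(k)$ Walsh equations. Case (iii) applies Frobenius to the lex-leading monomial $M_0=\prod_i a_i^{\beta_n(k-i+1)}$, which fits under the caps because $\beta_n(k)-\beta_n(k-i+1)\ge i-1$.

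\textbf{Case (i) fails as proposed.} Suppose all leading terms are taken in one monomial order. Then the product over the binary digits $2^{q_j}$ of $m$ of $M_0^{2^{q_j}}$, with $M_0=a_1^{2^{k-1}}a_2^{2^{k-2}}\cdots a_k$, is just $M_0^m$.
\begin{itemize}
\item It has coefficient $1$ automatically, being the leading monomial of $D_k^m$. So the cancellation you single out as the main obstacle never arises for it.
\item The real problem is that it does not survive the truncation. Its $a_1$-exponent is $m2^{k-1}$, while the cap is $d=t2^{k-1}+r$ (with $t=2^{\lfloor\log_2 m\rfloor}$, $r=m-t$). The excess is $r(2^{k-1}-1)>0$ unless $m$ is a power of two.
\item For example, $D_2^3=a_1^6a_2^3+a_1^5a_2^4+a_1^4a_2^5+a_1^3a_2^6$, and only $a_1^5a_2^4$ survives in $\mathcal R_{5,2}$.
\end{itemize}
A lex-uniqueness heuristic cannot repair this; the witness must be chosen carefully.

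The paper uses the mixed monomial $M_0^t\tilde M_0^{\,r}$ with $\tilde M_0=a_1a_2^2\cdots a_k^{2^{k-1}}$, and imports its nonvanishing from \cite{MSZ}. That coefficient can vanish. Using the Moore expansion $D_k=\sum_{\sigma\in S_k}\prod_i a_i^{2^{\sigma(i)-1}}$, for $k=m=3$ the monomial $a_1^9a_2^6a_3^6$ arises exactly twice in $D_3^2\cdot D_3$, so its coefficient is $0$.

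A correct route is to peel off $a_1$ at exponent exactly $d$.
\begin{itemize}
\item Write $D_k^m=\prod_p D_k(a_1^{2^p},\ldots,a_k^{2^p})$ over the binary digits $2^p$ of $m$.
\item The $a_1$-exponent $\sum_p 2^{p+\sigma_p(1)-1}$ is a sum of $s_2(m)=s_2(d)$ powers of two, where $s_2$ is the binary digit sum. It can equal $d$ only if these powers are exactly the binary digits of $d$.
\item Since $2^{p+\sigma_p(1)-1}\ge 2^p$, this forces $\sigma_p(1)=k$ for the top digit and $\sigma_p(1)=1$ for all lower digits.
\item Hence the coefficient of $a_1^d$ in $D_k^m$ is exactly $D_{k-1}(a_2,\ldots,a_k)^{t+2r}$.
\item The problem reduces to $D_{k-1}^{t+2r}\ne 0$ in $\mathcal R_{d-1,k-1}$. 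This follows by induction on $k$, since $(t+2r)+(2^{k-2}-1)2^{\lfloor\log_2(t+2r)\rfloor}\le d-1$; check the cases $2r<t$ and $2r\ge t$ separately.
\end{itemize}
For $k=m=3$ this gives the witness $a_1^9a_2^8a_3^4$.

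\textbf{Case (ii) is incomplete for general $m$.} For $m=2t-1$ the paper's argument is self-contained, and you should state it explicitly:
\begin{itemize}
\item Dyson's identity with all $b_i=t$ gives $\pm(tk)!/(k!(t!)^k)$ as the coefficient of $\prod_i a_i^{t(k-1)-(i-1)}$ in $V^{2t-1}$.
\item This is odd by Legendre's formula, since $s_2(tk)=s_2(k)$.
\item Multiplying by $Q_1^{2t-1}$ lands exactly on the caps of $\mathcal R_{t(k+1)-1,k}$.
\end{itemize}
For general $m$, the paper neither proves $(P_{k,2})^m\ne0$ in $\mathcal R_{d,k}$ nor uses Theorem~\ref{Pknm}. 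It takes the geometric bound directly from \cite[Theorem~1.5(a)]{Mejia25}, which is proved on a product of spheres with orthogonality equations appended, i.e.\ in a different ring. So your claim that a parity computation in \cite{Mejia25} supplies an odd coefficient of $(P_{k,2})^m$ below the staircase is unsupported. You must either cite the geometric result, as the paper does, or produce a staircase witness yourself.

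\textbf{The final assertion is not addressed.} The statement $\Delta^*(3,4,3)=12$ needs two inputs: the lower bound (1.3), which evaluates to $\lceil 42/4+3/2\rceil=12$, and a computer check that $(P_{4,3})^3\ne0$ in $\mathcal R_{12,4}$.
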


This implies Theorem~\ref{th13} (case $n=k$, $\beta_k(k)=2^{k-1}$)
and, for $n=2$, Theorem~\ref{th15b}.

\medskip

\noindent{\em Remark~1.3.}
For $n=k$ and $n=2$ the bound~(1.4) is tight when $m=2^j-1$.
For intermediate values $2<n<k$ it is not always optimal:
the algebraic method of Theorem~\ref{Pknm} may give sharper bounds
for specific $(m,k,n)$.  For example, $\Delta^*(3,4,3)=12$
while~(1.4) gives $15$, and $\Delta^*(7,4,3)=26$ while~(1.4) gives $31$.

\medskip

For $n=2$, since $\alpha_2(k)=k(k+1)/2$ and $\beta_2(k)=k$,
Theorem~\ref{th15} gives
\begin{equation}\tag{1.2}
\left\lceil\frac{m(k+1)+k-1}{2}\right\rceil
\;\le\; \Delta^*(m,k,2) \;\le\; m+(k-1)\,2^{\lfloor\log_2 m\rfloor}.
\end{equation}
When $m=2^j-1$ both sides coincide (also proved independently
by Simon~\cite{Simon19}):

\begin{thm}\label{th15b}
$$
\Delta^*(2^j-1,k,2) = 2^{j-1}(k+1)-1, \qquad j\ge1,\; k\ge2.
$$
\end{thm}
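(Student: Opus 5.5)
The plan is to squeeze $\Delta^*(2^j-1,k,2)$ between the lower bound (1.3) and the upper bound (1.4) of Theorem~\ref{th15}, specialised to $n=2$ as in (1.2). Both bounds are taken as given: (1.3) holds for all $m,k,n$, and (1.4) for $n=2$ is case (ii) of Theorem~\ref{th15}. Once both are in hand, all that remains is to check that the two sides coincide when $m=2^j-1$.

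\emph{Upper bound.} Put $m=2^j-1$. Then $\lfloor\log_2 m\rfloor=j-1$, and $\beta_2(k)=1+(k-1)=k$. Bound (1.4) gives
\[
\Delta^*(2^j-1,k,2)\le 2^j-1+(k-1)2^{j-1}=2^{j-1}(k+1)-1 .
\]
(For $j=1$, $m=1$, this reads $\Delta^*\le k$, which is consistent.)

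\emph{Lower bound.} Here $\alpha_2(k)=k(k+1)/2$, so (1.3) becomes
\[
\left\lceil\frac{(2^j-1)(k+1)+k-1}{2}\right\rceil=\left\lceil\frac{2^j(k+1)-2}{2}\right\rceil=2^{j-1}(k+1)-1 .
\]
The expression is already an integer, so the ceiling does nothing. The two bounds agree, and this proves the equality.

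The real content is therefore in the ingredients. The first is the Sard-type lower bound (1.3). The idea is to count the dimension of the configuration space of $k$ mutually orthogonal hyperplanes in $\mathbb R^d$: the normals form a point of $V_{d,k}$, of dimension $dk-k(k+1)/2$, plus $k$ offsets. This is set against the $m\binom{k+1}{2}$ independent equipartition conditions. Choosing generic measures (for example, sums of small balls in general position) makes the solution set empty by Sard's theorem whenever the number of conditions exceeds the dimension, i.e.\ $dk-k(k-1)/2<m\,k(k+1)/2$. The second ingredient is case (ii) of (1.4). Via Theorem~\ref{Pknm}, it amounts to showing $(Q_1Q_2)^m\neq0$ in $\mathcal R_{d,k}$ for $d=2^{j-1}(k+1)-1$.

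I expect this nonvanishing to be the main obstacle. My first approach would be Frobenius: for $m=2^j-1$ write $(Q_1Q_2)^{2^j-1}=\prod_{i<j}(Q_1Q_2)^{2^i}$ and exhibit a single surviving monomial. This could be done using Dyson's identity to identify the coefficient of the extremal monomial $\prod_i a_i^{d+1-i}$ in $(Q_1Q_2)^m$ as a constant-term multinomial that is odd. Alternatively, it follows from the general case (ii) already stated in Theorem~\ref{th15}, so the proof of this theorem itself reduces to the arithmetic above.
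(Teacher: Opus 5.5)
Your proposal is correct and follows the paper's own route. The paper likewise obtains Theorem~\ref{th15b} by specialising the Sard lower bound (Lemma~\ref{L41}) and the $n=2$ upper bound to $m=2^j-1$, where both equal $2^{j-1}(k+1)-1$; it proves the upper bound for $m=2t-1$ in Lemma~\ref{L:n2} exactly as you suggest, using Dyson's identity to show that the staircase monomial $\prod_i a_i^{t(k+1)-i}$ has odd coefficient $(tk)!/(k!\,(t!)^k)$ in $(Q_1Q_2)^{2t-1}$. The Frobenius splitting you mention first is unnecessary, because $(Q_1Q_2)^{2t-1}$ sits exactly in the top degree of $\mathcal R_{d,k}$, so that single staircase coefficient is the whole question.
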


The case $j=1$ ($m=1$) gives $\Delta^*(1,k,2)=k$:

\begin{cor}\label{cor16}
For any finite measure $\mu$ in $\mathbb{R}^d$ ($d\ge2$), there exist
$d$ mutually orthogonal hyperplanes such that every pair divides $\mu$
into four equal parts.
\end{cor}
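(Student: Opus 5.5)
The statement to prove is Corollary~\ref{cor16}: for $d\ge2$ and one measure $\mu$ there are $d$ mutually orthogonal hyperplanes, every pair of which quarters $\mu$. The plan is to apply Theorem~\ref{Pknm} with $m=1$, $n=2$ and $k=d$. It then suffices to show
\[
P_{d,2}=Q_1Q_2\ne0 \quad\text{in}\quad \mathcal R_{d,d}=\mathbb F_2[a_1,\ldots,a_d]/(a_1^{d+1},a_2^{d},\ldots,a_d^{2}).
\]
Equivalently, this is the $j=1$ case of Theorem~\ref{th15b}, namely $\Delta^*(1,k,2)=k$, specialized to $k=d$. Once nonvanishing holds, Theorem~\ref{Pknm} directly gives $d$ mutually orthogonal hyperplanes in $\mathbb R^d$ such that every $2$-element subset divides $\mu$ into $4$ equal parts. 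The geometric interpretation (the hyperplanes $u_i^\perp$ shifted by suitable offsets) is already built into Theorem~\ref{Pknm}, so nothing more is needed there.

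For the algebra, I use $Q_1=a_1a_2\cdots a_d$ and the Vandermonde expansion recalled before Corollary~\ref{corVkk}:
\[
Q_2=\sum_{\sigma\in S_d}\prod_i a_i^{\sigma(i)-1}.
\]
Multiplying gives
\[
Q_1Q_2=\sum_\sigma\prod_i a_i^{\sigma(i)}.
\]
The monomial $\prod_i a_i^{\sigma(i)}$ survives in $\mathcal R_{d,d}$ exactly when $\sigma(i)\le d-i+1$ for all $i$. I check this inductively from $i=d$ down to $i=1$. At $i=d$ the bound forces $\sigma(d)=1$. At $i=d-1$ we need $\sigma(d-1)\le 2$, and since $1$ is already used, $\sigma(d-1)=2$. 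Continuing, $\sigma(i)=d-i+1$ for every $i$, so only the reversal permutation survives. It contributes the single monomial
\[
a_1^{d}a_2^{d-1}\cdots a_d^{1}.
\]
This is the top monomial of $\mathcal R_{d,d}$, and the distinct monomials below the caps form an $\mathbb F_2$-basis. Hence $Q_1Q_2\ne0$, and there is no cancellation to worry about because only one term survives.

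I expect no real obstacle. The only care needed is matching the caps of $\mathcal R_{d,k}$, which start at $d+1$, against the exponents $\sigma(i)$, which run from $1$ to $d$ after multiplying by $Q_1$, and confirming that $1\le n=2\le k=d\le d$ satisfies the hypotheses of Theorem~\ref{Pknm}. The hypothesis $d\ge2$ is exactly what makes $n=2\le k$ allowed.
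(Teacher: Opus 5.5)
Your proposal is correct and is essentially the paper's argument. The corollary is the $m=1$, $n=2$, $k=d$ instance of Theorem~\ref{Pknm}, equivalently the $j=1$ case of Theorem~\ref{th15b}, and your unique-surviving-permutation check for $Q_1Q_2$ in $\mathcal R_{d,d}$ is the $Q_1$-shifted form of the paper's observation that only $a_1^{d-1}a_2^{d-2}\cdots a_{d-1}$ survives from $Q_2$ in $\mathcal P_{d,d}$, which is the input to Corollary~\ref{corVkk}.
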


\noindent{\em Remark~1.4.}
Makeev~\cite{Mak07} stated this result and outlined a proof strategy,
but the key steps were left incomplete. 
Our proof follows from Corollary~\ref{corVkk}; see also~\cite{4eqpartn}.

\medskip

Let $\Delta^\oplus(m,k,n)$ denote the centroid variant of $\Delta^*(m,k,n)$:
the minimal $d$ such that for any $m$ measures in~$\mathbb{R}^d$ with
finite first moments there exist $k$ mutually orthogonal hyperplanes,
each passing through all $m$ centers of mass, with every $n$-element
subfamily partitioning each measure into $2^n$ equal parts.

The centroid condition adds $m$ copies of $W_1^+$ to the test
representation (one per measure, encoding that each hyperplane passes
through the centroid).  The Euler polynomial of the extended
representation is therefore $Q_1^m\cdot(P_{k,n})^m=(Q_1P_{k,n})^m/Q_1^{m-1}$,
but since $Q_1\mid P_{k,n}$, it is cleaner to write the following
analogue of Theorem~\ref{Pknm}:

\begin{thm}\label{th17}
Let $1\le n\le k\le d$ and $m\ge1$.  Suppose
$$
(Q_1\,P_{k,n})^m \ne 0
\quad\text{in}\quad
\mathcal{R}_{d,k}.
$$
Then for any $m$ measures in $\mathbb{R}^d$ with finite first moments
there exist $k$ mutually orthogonal hyperplanes, each passing through
the centers of mass of all $m$ measures, such that every $n$-element
subfamily partitions each measure into $2^n$ equal parts.

In particular, since multiplication by $Q_1^m$ is injective in the
staircase ring (shifting each exponent by $m$ keeps the cap conditions),
$(Q_1P_{k,n})^m\ne0$ in $\mathcal{P}_{d^*+m,k}$ whenever
$(P_{k,n})^m\ne0$ in $\mathcal{P}_{d^*,k}$, giving
$$
\Delta^\oplus(m,k,n) \;\le\; d^*(m,k,n)+m.
$$
Setting $L=\lceil m\alpha_n(k)/k+(k-1)/2\rceil$, the lower bound
$\Delta^\oplus(m,k,n)\ge L+m$ is proved in Section~4.
For $n=2$, $m=2^j-1$ the bounds are sharp:
$$
\Delta^\oplus(2^j-1,k,2)=2^{j-1}(k+3)-2.
$$
\end{thm}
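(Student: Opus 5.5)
The plan is to run the proof of Theorem~\ref{Pknm} again, with the centroid constraints added to the test map, and then treat the numerical consequences one at a time.

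\emph{Configuration space.} As in Theorem~\ref{Pknm}, a configuration of $k$ mutually orthogonal hyperplanes with prescribed orientations is encoded by a point of $V_{d+1,k}$. A unit vector $(u_i,t_i)\in\mathbb{S}^d\subset\mathbb{R}^{d}\times\mathbb{R}$ gives the oriented affine hyperplane $\{x:\langle u_i,x\rangle=t_i\}$. Orthogonality of the normals $u_i$ forces a slightly different parametrization; the one used for Theorem~\ref{Pknm} is what produces the ring $\mathcal{R}_{d,k}$ with caps $a_1^{d+1},\dots,a_k^{d-k+2}$, and I would reuse it verbatim. On this space I would take the $B_k$-equivariant partition functions $g_{\mu_\ell,I}$ of Section~3, for $1\le|I|\le n$ and $\ell=1,\dots,m$. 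These land in $\bigoplus_{j=1}^n (W_j^+)^{\oplus m}$, and their Euler class is $(P_{k,n})^m$.

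\emph{Centroid constraints.} For each measure $\mu_\ell$ with centroid $c_\ell$ and each $i$, I add the function $h_{\ell,i}=\langle u_i,c_\ell\rangle-t_i$. It is odd under $\lambda_i$ and is permuted by $S_k$, so $(h_{\ell,1},\dots,h_{\ell,k})$ is a $B_k$-map into $W_1^+$. Finite first moments make it continuous. The extended map lands in $(W_1^+)^{\oplus 2m}\oplus\bigoplus_{j=2}^n (W_j^+)^{\oplus m}$, whose Euler polynomial is $Q_1^{2m}(Q_2\cdots Q_n)^m=(Q_1P_{k,n})^m$. Applying Theorem~\ref{thm:main-zero} with $s=2m$ and $r=n$, in exactly the form used for Theorem~\ref{Pknm}, gives a zero. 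At a zero, every hyperplane passes through every centroid and all the partition conditions hold. Degenerate hyperplanes, such as the one at $t_i=\pm1$ corresponding to infinity, are handled as in Theorem~\ref{Pknm}: since the measures are finite and positive, the bisection conditions already rule them out. If $n=1$, so that $r<2$, I fold everything into $Q_1^{2m}$ and use the $r\ge2$ statement with a vacuous factor, or argue directly.

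\emph{The inequality $\Delta^\oplus\le d^*+m$.} Here $d^*(m,k,n)$ is the least $d$ with $(P_{k,n})^m\ne0$ in $\mathcal{R}_{d,k}$. Since $Q_1=a_1\cdots a_k$, multiplying by $Q_1^m$ raises every exponent by $m$. Raising all caps by $m$ sends $\mathcal{R}_{d,k}$ to $\mathcal{R}_{d+m,k}$, so a surviving monomial stays surviving and the map is injective on monomials. This gives $(Q_1P_{k,n})^m\ne0$ in $\mathcal{R}_{d^*+m,k}$, and the first part of the theorem finishes the argument.

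\emph{Lower bound $L+m$.} I would follow the Sard/dimension argument of Section~4 for $\Delta^*$. The configuration space of $k$ orthogonal affine hyperplanes has dimension $\dim V_{d,k}+k$. For generic measures, the zero set must be cut out transversally by $m\alpha_n(k)$ partition equations and $mk$ centroid equations. Nonemptiness therefore forces $dk-k(k+1)/2+k\ge m\alpha_n(k)+mk$, which is $d\ge L+m$.

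\emph{Sharp case $n=2$, $m=2^j-1$.} Theorem~\ref{th15b} gives $d^*=2^{j-1}(k+1)-1$, so the upper bound is $2^{j-1}(k+1)-1+2^j-1=2^{j-1}(k+3)-2$. The lower bound is $\lceil ((2^j-1)(k+1)+k-1)/2\rceil+2^j-1=2^{j-1}(k+1)-1+2^j-1$, the same number.

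The main obstacle is the genericity step in the lower bound. One has to exhibit measures for which the centroid conditions are transversal to the partition conditions. I would first try small perturbations of point-mass clusters, as in Section~4, and move the centroids independently by translating the clusters.
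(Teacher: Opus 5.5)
Your plan matches the paper's: rerun Theorem~\ref{Pknm} with centroid equations added, shift by $Q_1^m$, do a Sard count, and check the arithmetic for $m=2^j-1$. The shift argument and the sharp-case arithmetic are fine. The gaps are in the two substantive steps.

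\textbf{Upper bound.} Theorem~\ref{thm:main-zero} cannot be applied with $s=2m$ as you state. That theorem concerns maps on $V_{N,k}$ and needs nonvanishing in $\mathcal{P}_{N,k}$. Since $\mathcal{R}_{d,k}=\mathcal{P}_{d+1,k}$, your hypothesis fits only $N=d+1$. But $V_{d+1,k}$ does not encode hyperplanes with orthogonal normals, as you note yourself: orthonormality of $(t_i,u_i)$ gives $t_it_j+\langle u_i,u_j\rangle=0$. On the correct space of normals $V_{d,k}$, a map with $2m$ copies of $W_1^+$ would need $(Q_1P_{k,n})^m\ne0$ in $\mathcal{P}_{d,k}=\mathcal{R}_{d-1,k}$, which is stronger than what is assumed.

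The mechanism of Theorem~\ref{Pknm} that you defer to is not another parametrization. It eliminates the offsets. For smooth positive densities, set $t_i=h_1(u_i)$, the median of $\mu_1$ in direction $u_i$. This is continuous and odd, so the configuration depends $B_k$-equivariantly on $\mathbf u\in V_{d,k}$, and the bisection conditions for $\mu_1$ hold identically. The remaining map $V_{d,k}\to(W_1^+)^{\oplus(2m-1)}\oplus\bigoplus_{j=2}^n(W_j^+)^{\oplus m}$ consists of your centroid functions $\langle u_i,c_\ell\rangle-h_1(u_i)$ for all $m$ measures, plus the singleton bisections of $\mu_2,\dots,\mu_m$. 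So Theorem~\ref{thm:main-zero} is used with $s=2m-1$, and it needs $Q_1^{2m-1}(Q_2\cdots Q_n)^m\ne0$ in $\mathcal{P}_{d,k}$. The monomials of $(Q_1P_{k,n})^m$ are exactly $a_1\cdots a_k$ times those of this polynomial, with the same coefficients. Hence this condition is equivalent to your hypothesis in $\mathcal{R}_{d,k}$, and the one absorbed factor $Q_1$ is where the raised caps come from. General measures then follow by the Gaussian smoothing of Theorem~\ref{Pknm}. It respects the new equations because convolution with a centered Gaussian does not move centroids.

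\textbf{Lower bound.} Your count ($dk-k(k-1)/2\ge m\alpha_n(k)+mk$ iff $d\ge L+m$) is right. But the transversality you postpone is the substance of the proof, and the perturbations of Lemma~\ref{L41} do not provide it. The singleton perturbations $h^z_{\{i\}}$ have first moment $u_i$, so they also move the centroid equations, and no combination of the $h^z_I$ moves a centroid equation alone.

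The missing ingredient is a zero-mass dipole $b^z_i$. Take $(2\varepsilon)^{-1}$ times the difference of two bumps centered at $p\pm\varepsilon u_i$, where $p=\sum_j(t_j+1)u_j$ and $\varepsilon=\tfrac14$, so both bumps lie inside the all-positive open cell of the arrangement at $z$. Every Walsh sign function $\phi_J(z,\cdot)$ is constant on its support, so all its Walsh pairings vanish, while $\int(\langle u_j,x\rangle-t_j)\,b^z_i\,dx=\delta_{ij}$. Replacing $h^z_{\{i\}}$ by $h^z_{\{i\}}-b^z_i$ gives an identity Jacobian on all $\alpha_n(k)+k$ coordinates of each measure. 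The compactness, finite-cover and Sard argument of Lemma~\ref{L41} then runs with $N=m(\alpha_n(k)+k)$.

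Your idea of translating clusters is the finite version of this dipole only if each translated piece stays strictly inside one cell for all $z$ in the relevant neighborhood. Also, point masses are not admissible measures here, and a point-mass base would destroy the smoothness that Sard's theorem needs.
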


\subsection{Algorithms for mass partitioning.}

In discrete versions of mass-partitioning results, the task involves partitioning several finite families of points in $\mathbb{R}^d$ in a prescribed manner. If the total number of points is $N$, it is desirable to have an algorithm for finding such a partition whose running time is expressed in terms of $N$.

In two dimensions, let $X_1$ and $X_2$ be finite point sets with $N$ points in total. Edelsbrunner and Waupotitsch \cite{EW} gave an $O(N\log N)$--time algorithm for computing a ham--sandwich cut. Lo, Matoušek, and Steiger \cite{LoMS} proved that a planar ham--sandwich cut can be computed in $O(N)$ time. They also presented polynomial algorithms for finding ham--sandwich cuts in every fixed dimension $d>1$.

\medskip

A previous $O(N\log N)$--time algorithm for the pancake theorem was given in \cite{RoySt}.
Recently, we improved this result:

\medskip

\noindent {\bf Theorem} \cite{FM25}. {\em For any set of $N$ points $P$ in the plane, a partition of $P$ by two orthogonal lines into four equal parts can be found in optimal time, linearly dependent on $N$, i.e. in $\Theta(N)$ time.}

\medskip

In  \cite{FM25} we also proved that the computational complexity of the discrete versions of Theorem \ref{th15} (the case where $n = k = 2$) and Corollary \ref{cor16} is polynomial. However, the algorithms proposed in \cite{FM25} are not optimal. An interesting problem is finding efficient and optimal algorithms for the orthogonal partitioning of masses for arbitrary parameters $m, k$, and $n$.

\medskip 

\medskip

\noindent{\bf Acknowledgment.} I would like to thank Pavle
Blagojevi\'c, Mikhail Bludov, Florian Frick, Roman Karasev,
Andrey Ryabichev, Pablo Sober\'on, and Rade \v Zivaljevi\'c
for the helpful discussions, valuable comments, and useful references.

\section{The hyperoctahedral zero theorem}

In this section we prove Theorem~\ref{thm:main-zero} using
mod-two equivariant cohomology.  The proof is cohomological and
does not require equivariant bordism.
Borsuk--Ulam type theorems for products of spheres using bordism methods
were obtained in~\cite{Mus12,CCFH20,BFHZ16,FH88,Z17}; the cohomological
$G$-index approach appears in~\cite{MSZ,MV16,LS25,ST23}.

The proof of Theorem~\ref{thm:main-zero} has three steps.
\begin{enumerate}
\item[(i)] \textbf{Gysin identity.}  Use the fibration
$\pi: X_{n,k}=V_{n,k}/B_k \to \mathrm{Gr}_k(\mathbb{R}^n)$ with
fiber $O(k)/B_k$ and the $A_k$-localization theorem to establish
the identity $\pi_!(e_2 e_R) = R(\gamma_k)$ (Proposition~\ref{prop:universal-gysin}).
\item[(ii)] \textbf{Schur--Vandermonde criterion.}  Show that $Q_2R\ne0$ in
$\mathcal{P}_{n,k}$ is equivalent to $R(\gamma_k)\ne0$ in
$H^*(\mathrm{Gr}_k(\mathbb{R}^n);\mathbb{F}_2)$
(Lemma~\ref{lem:schur-vandermonde}).
\item[(iii)] \textbf{Characteristic class argument.}  Conclude that the
nonvanishing hypothesis forces the top Stiefel--Whitney class of the
associated bundle $\mathcal{W}\to X_{n,k}$ to be nonzero, so every
equivariant section (= equivariant map $V_{n,k}\to W$) must vanish
somewhere.
\end{enumerate}

\medskip

Let
\[
 p:BB_k\longrightarrow BO(k)
\]
be induced by the standard inclusion \(B_k\subset O(k)\). Its fiber is
\[
 F_k=O(k)/B_k,
 \qquad \dim F_k=\binom{k}{2}.
\]
For \(1\le j\le k\), let
\[
 e_j=w_{\binom{k}{j}}
 \bigl(EB_k\times_{B_k}W_j^+\bigr)
 \in H^{\binom{k}{j}}(BB_k;\mathbb F_2).
\]
Its restriction to \(BA_k\) is \(Q_j\). If
\(R=\prod_{j=1}^kQ_j^{c_j}\), put
\[
 W_R=\bigoplus_{j=1}^k(W_j^+)^{\oplus c_j},
 \qquad
 e_R=w_{\dim W_R}(EB_k\times_{B_k}W_R).
\]
The same symmetric polynomial defines a characteristic class
\(R(\gamma_k)\in H^*(BO(k);\mathbb F_2)\), where \(\gamma_k\) is the
universal real \(k\)-plane bundle.

\begin{lemma}\label{lem:fixed-points}
The fixed-point set \(F_k^{A_k}\) is finite. It contains a distinguished
point \(x_0\), represented by the coordinate-line decomposition
\[
 \mathbb R^k=\mathbb R e_1\oplus\cdots\oplus\mathbb R e_k.
\]
At this point
\[
 e_{A_k}(T_{x_0}F_k)=Q_2,
 \qquad e_2|_{x_0}=Q_2.
\]
At every other point \(x\in F_k^{A_k}\), one has \(e_2|_x=0\).
\end{lemma}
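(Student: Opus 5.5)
The plan is to identify $F_k^{A_k}$ with a finite set of conjugate subgroups, and then compute both Euler classes at a fixed point $x=gB_k$ as Euler classes of $A_k$-representations, obtained by restricting along the homomorphism $\varphi_g:A_k\to B_k$, $a\mapsto g^{-1}ag$. Throughout, $A_k\subset O(k)$ is the diagonal sign group acting on $F_k=O(k)/B_k$ by left multiplication.

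\textbf{Step 1: the fixed points are finite.} A coset $gB_k$ is fixed by $A_k$ exactly when $g^{-1}A_kg\subseteq B_k$. So each fixed point determines a subgroup $H=g^{-1}A_kg$ of the finite group $B_k$. Conversely, the elements $g$ giving a fixed subgroup $H$ form one coset $N_{O(k)}(A_k)\,g_0$. The normalizer of the diagonal sign group in $O(k)$ is the group of signed permutation matrices, which is $B_k$ itself. Hence the fixed points correspond injectively to the finitely many subgroups $H\le B_k$ that are $O(k)$-conjugate to $A_k$, so $F_k^{A_k}$ is finite. The point $x_0=eB_k$ corresponds to $H=A_k$, which is the coordinate-line decomposition, and there $\varphi_e$ is the identity.

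\textbf{Step 2: the tangent representation at $x_0$.} Since $B_k$ is finite, $T_{x_0}F_k=\mathfrak{o}(k)$, the skew-symmetric matrices, with $A_k$ acting by conjugation. The basis vector $E_{ij}-E_{ji}$ ($i<j$) spans the line $\mathbb R_{\lambda_i+\lambda_j}$. Therefore $e_{A_k}(T_{x_0}F_k)=\prod_{i<j}(a_i+a_j)=Q_2$. Likewise $e_2|_{x_0}$ is the Euler class of $W_2^+$ restricted along the identity, which is $Q_2$ by definition.

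\textbf{Step 3: vanishing of $e_2$ at the other fixed points.} This is the main step. Here $e_2|_x$ is the top Stiefel--Whitney class of the $A_k$-representation $\varphi_g^*W_2^+$, so it suffices to find a trivial summand in $W_2^+|_H$. To do this, realize $W_2^+$ as the off-diagonal part of $\mathrm{Sym}^2\mathbb R^k$, with basis $e_ie_j$ ($i<j$). Indeed, $B_k$ acts on $e_ie_j$ by the sign $\varepsilon_i\varepsilon_j$ and permutes the indices with no extra sign. This gives an exact sequence of $B_k$-representations
\[
0\to W_2^+\to \mathrm{Sym}^2\mathbb R^k\to D\to 0,\qquad D=\mathrm{span}\{e_i^2\},
\]
where $D$ is the permutation representation of $S_k$ pulled back to $B_k$.

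Now restrict to $H$. Since $H$ is conjugate to $A_k$ in $O(k)$, $\mathbb R^k|_H$ splits into lines with linearly independent nontrivial characters $\mu_1,\dots,\mu_k$. Hence $\mathrm{Sym}^2\mathbb R^k|_H$ has weights $\mu_i+\mu_j$ for $i\le j$. Exactly $k$ of these are trivial, namely those with $i=j$, because $\mu_i+\mu_j\ne0$ for $i\ne j$ by independence.

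On the other hand, the multiplicity of the trivial character in $D|_H$ equals the number of orbits of the image $\bar H$ of $H$ in $S_k$. If $H\ne A_k$, then $H\not\subseteq A_k$, since both have order $2^k$. So $\bar H$ is nontrivial and has fewer than $k$ orbits. Because representations of the $2$-group $H$ over $\mathbb R$ are completely reducible, $W_2^+|_H$ contains at least one trivial summand, and its top Stiefel--Whitney class vanishes.

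I expect the only delicate point to be justifying that the restriction of $e_2$ to the fixed point $x$ is indeed the Euler class of $\varphi_g^*W_2^+$. This holds because the fixed point gives a map $BA_k\to BB_k$ induced by $\varphi_g$, covering the inclusion of the fiber. Consistently, at $x_0$ the same count gives no trivial summand, recovering $Q_2$.
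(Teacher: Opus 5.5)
Your proof is correct and is essentially the paper's argument, transported by $g$. The paper works directly with the $A_k$-invariant line decomposition $L=\{g\mathbb R e_i\}$ and the splitting $\operatorname{Sym}^2(\mathbb R^k)=D_L\oplus O_L$. It counts $k$ invariants in total and $s$ (the number of orbits on lines) in $D_L$, which is exactly your count for $H=g^{-1}A_kg$ acting on $W_2^+\oplus D$. For finiteness the paper fixes the homomorphism $A_k\to B_k$ and uses the centralizer, whereas you fix the image subgroup and use the normalizer $B_k$.

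One correction in Step~1: fixed points do not correspond injectively to subgroups $H$. All representatives $bg_0$ with $b\in B_k$ give the same $H$ but can define different cosets $bg_0B_k$. For example, for $k=3$ the three decompositions $\{\mathbb R(e_a+e_b),\mathbb R(e_a-e_b),\mathbb R e_c\}$ all come from a single $H$. Finiteness still follows, because the set of admissible $g$ is a finite union of the finite cosets $B_kg_0$.
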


\begin{proof}
A point of \(F_k\) is an unordered orthogonal decomposition of
\(\mathbb R^k\) into lines. It is fixed by \(A_k\) precisely when
\(A_k\) permutes those lines. Such a point determines, up to
conjugation in \(B_k\), a homomorphism \(A_k\to B_k\) whose signed
permutation representation is equivalent to the standard diagonal
representation. There are only finitely many such homomorphisms. For
each of them, the space of orthogonal intertwiners is a coset of the
centralizer of \(A_k\) in \(O(k)\). Since the standard \(A_k\)-module is
the sum of \(k\) distinct one-dimensional characters, this centralizer
is the finite diagonal sign group. Hence \(F_k^{A_k}\) is finite.

At the coordinate point,
\[
 T_{x_0}F_k\cong\mathfrak o(k)
 \cong\bigoplus_{i<j}\mathbb R_{\lambda_i+\lambda_j}
\]
as an \(A_k\)-module. The same weights occur in \(W_2^+|_{A_k}\), so
both equivariant Euler classes are \(Q_2\).

Now let \(x\) correspond to an \(A_k\)-invariant line decomposition
\(L=(L_1,\ldots,L_k)\). There is an \(A_k\)-invariant splitting
\[
 \operatorname{Sym}^2(\mathbb R^k)=D_L\oplus O_L,
 \quad
 D_L=\bigoplus_i\operatorname{Sym}^2(L_i),
 \quad
 O_L=\bigoplus_{i<j}L_i\mathbin\odot L_j.
\]
The second summand is the fiber representation of \(W_2^+\) at \(x\).
Because the standard \(A_k\)-representation is multiplicity-free,
\[
 \dim\operatorname{Sym}^2(\mathbb R^k)^{A_k}=k.
\]
If \(s\) is the number of \(A_k\)-orbits on the set of lines, then
\(D_L\) is the corresponding permutation module, and hence
\[
 \dim D_L^{A_k}=s,
 \qquad \dim O_L^{A_k}=k-s.
\]
If \(s=k\), every line is individually \(A_k\)-invariant, and the
multiplicity-free decomposition forces the lines to be the coordinate
character lines. Thus \(x=x_0\). Every nonstandard fixed point has
\(s<k\), so \(O_L\) contains a trivial \(A_k\)-summand and its
equivariant Euler class is zero.
\end{proof}

\begin{prop}[Universal Gysin identity]\label{prop:universal-gysin}
For every polynomial \(R=\prod_{j=1}^kQ_j^{c_j}\),
\[
 p_!(e_2e_R)=R(\gamma_k).
\]
\end{prop}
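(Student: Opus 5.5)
Our approach is to prove the identity by equivariant localization with respect to $A_k$, after reducing to the maximal $2$-torus of $O(k)$. We do not compute $p_!$ directly on $BB_k$. The key observation is that $H^*(BO(k);\mathbb F_2)\to H^*(BA_k;\mathbb F_2)$ is injective, with image the symmetric polynomials $\mathbb F_2[a_1,\ldots,a_k]^{S_k}$. So it suffices to compare the two sides after restriction along $BA_k\to BO(k)$.

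\textbf{Step 1: base change.} Form the pullback of the bundle $p:BB_k\to BO(k)$, which has fiber $F_k=O(k)/B_k$, along $q:BA_k\to BO(k)$. The total space is the Borel construction $EA_k\times_{A_k}F_k$, where $A_k\subset O(k)$ acts on $F_k$ by left multiplication. Gysin maps commute with pullback in fiber bundles, so
\[
q^*p_!(e_2e_R)=(p_{A})_!\bigl(\tilde q^*(e_2e_R)\bigr),
\]
where $p_A:EA_k\times_{A_k}F_k\to BA_k$ is the projection. The right-hand side is an $A_k$-equivariant pushforward from $F_k$ to a point. The class $\tilde q^*(e_2e_R)$ is the equivariant Euler class of the bundle $O(k)\times_{B_k}(W_2^+\oplus W_R)$ over $F_k$.

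\textbf{Step 2: localization.} Since $F_k$ is a compact manifold with an $A_k$-action, we apply the mod-$2$ localization theorem for elementary abelian $2$-groups (tom Dieck/Quillen). After inverting the nonzero linear forms,
\[
(p_A)_!(\alpha)=\sum_{x\in F_k^{A_k}}\frac{\alpha|_x}{e_{A_k}(T_xF_k)}.
\]
Lemma~\ref{lem:fixed-points} tells us that $F_k^{A_k}$ is finite and that $e_2|_x=0$ at every fixed point other than $x_0$. At $x_0$ we have $e_2|_{x_0}=Q_2=e_{A_k}(T_{x_0}F_k)$, so the two cancel. Hence the sum collapses to $e_R|_{x_0}$. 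At $x_0$ the fiber of $W_R$ is the standard $A_k$-module $\bigoplus_j (W_j^+)^{c_j}$, whose Euler class is $\prod_j Q_j^{c_j}=R(a_1,\ldots,a_k)$. This equals $q^*R(\gamma_k)$, because $\gamma_k$ restricts to $\bigoplus_i\mathbb R_{\lambda_i}$ over $BA_k$. The localized identity then lifts to an honest identity in $H^*(BA_k)$ because $H^*(BA_k)$ is a polynomial domain and injects into its localization. Injectivity of $q^*$ finally gives $p_!(e_2e_R)=R(\gamma_k)$.

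\textbf{Main obstacle.} The delicate point is justifying the localization formula mod $2$. The fixed points need not be isolated with nondegenerate normal data: at a nonstandard fixed point, $T_xF_k$ may contain trivial summands, so $e_{A_k}(T_xF_k)$ could vanish and the formula would not make sense as written. The approach we would take first is a localization with a twist. We do not divide at all. Instead we use the Atiyah--Bott/Quillen statement that the pushforward of $\alpha$ is determined by the restrictions of $\alpha$ to fixed components, with contributions $\alpha|_{C}/e(\nu_C)$. Then we show separately that at nonstandard points the component contributes zero. This holds because $e_2$ restricted to the whole fixed component $C$ already carries the Euler class of a bundle with a trivial $A_k$-summand. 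We would use the $O_L^{A_k}$ computation from the lemma, which persists along $C$, to see that this Euler class is nilpotent, hence zero after localization.

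A secondary check is the grading: $\deg(e_2e_R)-\dim F_k=\deg R$, which matches $\binom{k}{2}=\deg Q_2$.
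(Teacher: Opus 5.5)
Your Steps~1--2 are correct and are essentially the paper's proof: pull back along $BA_k\to BO(k)$, apply the fixed-point integration formula, use Lemma~\ref{lem:fixed-points} to kill every term except $Q_2R/Q_2=R$ at $x_0$, and conclude by injectivity of localization and of $H^*(BO(k);\mathbb F_2)\to H^*(BA_k;\mathbb F_2)$. The ``main obstacle'' you raise is not real, so no workaround is needed. Since $F_k^{A_k}$ is finite, every fixed point is isolated. More concretely, at $x=gB_k$ the tangent space is $\mathfrak o(k)$ with $a\in A_k$ acting by $\mathrm{Ad}_{g^{-1}ag}$, and $\mathrm{Ad}_g$ identifies this with $\Lambda^2\mathbb R^k$, so $e_{A_k}(T_xF_k)=Q_2$ at \emph{every} fixed point. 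The trivial $A_k$-summand you have in mind lives in the fiber $O_L$ of $W_2^+$, not in the tangent space. Your ``nilpotent, hence zero after localization'' step would in any case not be valid for a positive-dimensional fixed component.
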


\begin{proof}
Restrict the bundle \(p\) along the diagonal inclusion
\(i:BA_k\to BO(k)\), and invert all nonzero linear forms in
\[
 H^*(BA_k;\mathbb F_2)=\mathbb F_2[a_1,\ldots,a_k].
\]
The localization theorem and the fixed-point integration formula give
\[
 i^*p_!(e_2e_R)
 =\sum_{x\in F_k^{A_k}}
 \frac{(e_2e_R)|_x}{e_{A_k}(T_xF_k)}.
\]
See \cite{Borel1960,Hsiang1975,tomDieck1979}. By Lemma
\ref{lem:fixed-points}, the contribution of \(x_0\) is
\[
 \frac{Q_2R}{Q_2}=R,
\]
and every other contribution vanishes. Hence
\(i^*p_!(e_2e_R)=R\). Localization is injective on the polynomial ring.
Finally, the injective map
\[
 H^*(BO(k);\mathbb F_2)=\mathbb F_2[w_1,\ldots,w_k]
 \longrightarrow \mathbb F_2[a_1,\ldots,a_k]
\]
sends \(w_j\) to the \(j\)-th elementary symmetric polynomial and has
the symmetric polynomials as its image. Therefore the last equality is
precisely \(p_!(e_2e_R)=R(\gamma_k)\).
\end{proof}

\begin{lemma}[Schur--Vandermonde lemma]
\label{lem:schur-vandermonde}
For every symmetric polynomial
\(R\in\mathbb F_2[a_1,\ldots,a_k]^{S_k}\),
\[
 Q_2R\ne0\text{ in }\mathcal{P}_{n,k}
 \quad\Longleftrightarrow\quad
 R(\gamma_k)\ne0
 \text{ in }H^*(\operatorname{Gr}_k(\mathbb R^n);\mathbb F_2).
\]
\end{lemma}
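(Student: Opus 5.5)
We prove the equivalence by identifying $\mathcal P_{n,k}$ with the cohomology of the complete flag manifold and using the Gysin map of the flag bundle over the Grassmannian. Let $\mathrm{Fl}=\mathrm{Fl}_{1,\ldots,k}(\mathbb R^n)$ be the manifold of ordered orthogonal line decompositions $L_1\oplus\cdots\oplus L_k$ of $k$-planes in $\mathbb R^n$, so $\mathrm{Fl}=V_{n,k}/A_k$. Let $q:\mathrm{Fl}\to\mathrm{Gr}_k(\mathbb R^n)$ be the projection; its fiber is the complete flag manifold $O(k)/A_k\cong \mathrm{Fl}(\mathbb R^k)$, of dimension $\binom k2$.

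\textbf{Step 1: $\mathcal P_{n,k}$ as a cohomology ring.} Realize $\mathrm{Fl}$ as an iterated projective bundle. Choose $L_1$ in $\mathbb{RP}^{n-1}$, then $L_2$ in the projectivization of $L_1^\perp$, which has fiber $\mathbb{RP}^{n-2}$, and so on. The Leray--Hirsch theorem gives an additive basis of monomials $a_1^{e_1}\cdots a_k^{e_k}$ with $e_i\le n-i$, where $a_i=w_1(L_i)$. These are exactly the standard monomials of $\mathcal P_{n,k}$. To make $\mathcal P_{n,k}$ an honest ring isomorphism one would need the full Borel presentation. For the equivalence we only need two facts: the classes $a_i$ satisfy these relations in triangular form, and the monomial basis is the one above. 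Accordingly we will phrase nonvanishing in $\mathcal P_{n,k}$ as nonvanishing of the class $Q_2R\in H^*(\mathrm{Fl})$. This identification is the point needing most care.

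\textbf{Step 2: the fiberwise Gysin map.} Since $R$ is symmetric, $R(a_1,\ldots,a_k)=q^*R(\gamma_k)$. By the projection formula,
\[
q_!(Q_2\,q^*R(\gamma_k))=q_!(Q_2)\,R(\gamma_k).
\]
Now $q_!(Q_2)$ lies in degree $0$, and it equals $1$: on each fiber $\mathrm{Fl}(\mathbb R^k)$, the top monomial $a_1^{k-1}\cdots a_{k-1}$ of the Vandermonde evaluates to $1$ on the fundamental class. This mirrors Lemma~\ref{lem:fixed-points} and Proposition~\ref{prop:universal-gysin}; alternatively, apply Proposition~\ref{prop:universal-gysin} with $R=1$ and pull it back along the classifying map, noting that $q$ factors through the $S_k$-quotient. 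Hence $q_!(Q_2R)=R(\gamma_k)$. The direction ($\Leftarrow$) follows immediately: if $Q_2R=0$, then $R(\gamma_k)=q_!(0)=0$.

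\textbf{Step 3: the converse.} Suppose $Q_2R\neq0$ in $H^*(\mathrm{Fl})$. By Leray--Hirsch, $H^*(\mathrm{Fl})$ is a free $H^*(\mathrm{Gr})$-module with basis the fiber monomials $a^e$, $e_i\le k-i$. Write $Q_2\cdot q^*R(\gamma_k)$ in this basis. Multiplication by $Q_2$ annihilates every monomial of positive degree modulo the fiber relations, because $Q_2$ already has the top fiber degree. Hence $Q_2\cdot q^*x=q^*x\cdot Q_2$, and the coefficient of the top fiber class is $x$. So if $R(\gamma_k)=0$, then $Q_2R=0$, which proves ($\Rightarrow$).

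\textbf{Main obstacle.} The main difficulty is Step 1: making the comparison with the abstract quotient $\mathcal P_{n,k}$ precise, since $H^*(\mathrm{Fl})$ has extra relations, namely the dual Stiefel--Whitney classes of $\gamma_k$ vanish beyond degree $n-k$. The plan is to show that nonvanishing of $Q_2R$ is detected on the top fiber coefficient in both settings via the same Vandermonde/Schur expansion. Concretely, expanding $Q_2R$ in $\mathcal P_{n,k}$ along the basis $a_1^{k-1}\cdots a_{k-1}\cdot s_\lambda$ relates its nonvanishing to the nonvanishing of Schur polynomials $s_\lambda$ with $\lambda$ inside the $k\times(n-k)$ box. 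These are exactly the Schubert classes that form a basis of $H^*(\mathrm{Gr}_k(\mathbb R^n))$.
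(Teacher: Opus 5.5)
There is a genuine error, and it is in Step~2, not only in the comparison you flag at the end. For $q\colon V_{n,k}/A_k\to\operatorname{Gr}_k(\mathbb R^n)$, the class $Q_2=\prod_{i<j}(a_i+a_j)$ is the top Stiefel--Whitney class of the vertical tangent bundle $\bigoplus_{i<j}L_i\otimes L_j$. So $q_!(Q_2)$ is the mod-$2$ Euler characteristic of the fiber $O(k)/A_k$. That Euler characteristic is $0$ for $k\ge2$, because the fiber is a finite quotient of the positive-dimensional compact Lie group $O(k)$. Algebraically, in the mod-$2$ coinvariant algebra $H^*(O(k)/A_k;\mathbb F_2)$ each of the $k!$ monomials $\prod_i a_i^{k-\sigma(i)}$ of the Vandermonde equals the top class, so $Q_2=k!\cdot[\mathrm{top}]=0$. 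Only the single staircase monomial integrates to $1$. Your alternative via Proposition~\ref{prop:universal-gysin} gives the same answer. Here $q=\pi\circ\rho$ with $\rho\colon V_{n,k}/A_k\to V_{n,k}/B_k$ a $k!$-sheeted cover and $\rho^*\bar e_2=Q_2$, so $q_!(Q_2)=\pi_!(\bar e_2\,\rho_!(1))=k!\equiv0$. Consequently the equivalence your Steps~2--3 actually address, ``$Q_2R\ne0$ in $H^*(\mathrm{Fl})$ iff $R(\gamma_k)\ne0$,'' is false. Take $k=n=2$ and $R=1$. Then $\mathrm{Fl}=O(2)/A_2\cong S^1$ with $H^*(\mathrm{Fl};\mathbb F_2)=\mathbb F_2[a_1,a_2]/(a_1+a_2,\,a_1a_2)$, so $Q_2=a_1+a_2=0$ there. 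Yet $R(\gamma_2)=1\ne0$, and $Q_2\equiv a_1\ne0$ in $\mathcal P_{2,2}$. The Leray--Hirsch assertion in Step~3 fails for the same reason: the top-fiber coefficient of $Q_2\,q^*x$ is $x\,q_!(Q_2)=0$, and for $k=2$, $Q_2=q^*w_1(\gamma_2)$ lies entirely in fiber degree $0$.

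The same example shows that Step~1 is not a point needing care but an invalid reduction. We have $H^*(\mathrm{Fl};\mathbb F_2)=\mathbb F_2[a_1,\ldots,a_k]/(h_n(a_1),h_{n-1}(a_1,a_2),\ldots,h_{n-k+1}(a_1,\ldots,a_k))$, whereas $\mathcal P_{n,k}$ is the quotient by the initial monomial ideal of these relations. The two rings share a monomial basis, but a given polynomial has different images in them, so nonvanishing in one implies nothing about the other. Because $\mathcal P_{n,k}$ is a monomial quotient, the left-hand condition is purely combinatorial and must be handled directly. That is what your last paragraph gestures at, and it is the paper's proof:
\begin{itemize}
\item Expand $R=\sum_\lambda c_\lambda s_\lambda$. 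Over $\mathbb F_2$, $Q_2s_\lambda=\det(a_i^{\mu_j})$ with $\mu_j=\lambda_j+k-j$ strictly decreasing.
\item Its monomials $\prod_i a_i^{\mu_{\sigma(i)}}$ are distinct for distinct pairs $(\lambda,\sigma)$, so nothing cancels.
\item Some such monomial survives the caps $n-1,\ldots,n-k$ iff $\mu_1\le n-1$, i.e.\ iff $\lambda_1\le n-k$; in that case the $\sigma=\mathrm{id}$ term survives.
\item The set $\{s_\lambda(\gamma_k):\lambda\subseteq(n-k)^k\}$ is a basis of $H^*(\operatorname{Gr}_k(\mathbb R^n);\mathbb F_2)$, and $s_\lambda(\gamma_k)=0$ when $\lambda_1>n-k$.
\end{itemize}
This proves the lemma with no flag manifold and no Gysin map. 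Once you carry out that fallback, Steps~1--3 should be discarded rather than repaired.
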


\begin{proof}
Expand \(R\) in the Schur basis:
\[
 R=\sum_\lambda c_\lambda s_\lambda,
 \qquad c_\lambda\in\mathbb F_2.
\]
Over \(\mathbb F_2\), the Vandermonde alternant and the alternant formula
for Schur polynomials give
\[
 Q_2=\det(a_i^{k-j})_{i,j=1}^k,
 \qquad
 Q_2s_\lambda
 =\det(a_i^{\lambda_j+k-j})_{i,j=1}^k.
\]
The exponents
\[
 \lambda_1+k-1>\lambda_2+k-2>\cdots>\lambda_k
\]
are strictly decreasing. Therefore different partitions have disjoint
monomial supports, so no cancellation between them is possible. The
last determinant survives in \(\mathcal{P}_{n,k}\) exactly when its
decreasing exponent sequence can be matched with
\[
 n-1,n-2,\ldots,n-k.
\]
This is equivalent to \(\lambda_1\le n-k\). Consequently,
\(Q_2R\ne0\) in \(\mathcal{P}_{n,k}\) exactly when some coefficient
\(c_\lambda\) with \(\lambda\subseteq(n-k)^k\) is nonzero.

The mod-two cohomology of the real Grassmannian has the Schubert basis
\[
 \{s_\lambda(\gamma_k):\lambda\subseteq(n-k)^k\}.
\]
Thus the same condition is equivalent to \(R(\gamma_k)\ne0\). See
\cite{MilnorStasheff1974,Fulton1997} for the Grassmannian presentation
and the Schur alternant identities.
\end{proof}

\begin{lemma}[Euler classes of character representations]\label{lem:euler-characters}
Let $q=\varepsilon_1\lambda_1+\cdots+\varepsilon_k\lambda_k$ with $\varepsilon_i\in\mathbb{F}_2$,
and let $L_q=EA_k\times_{A_k}\mathbb{R}_q$ be the real line bundle over $BA_k$
associated with the character~$q$.  Then
$$
e_{A_k}(\mathbb{R}_q)=w_1(L_q)=\varepsilon_1a_1+\cdots+\varepsilon_ka_k
\quad\in H^1(BA_k;\mathbb{F}_2)=\mathbb{F}_2[a_1,\ldots,a_k]^1.
$$
Consequently,
$$
e_{A_k}(W_j^+)=\prod_{\substack{I\subseteq[k]\\|I|=j}}\!\!\left(\sum_{i\in I}a_i\right)=Q_j,
\qquad
e_{A_k}\!\left((W_1^+)^{\oplus s}\oplus\bigoplus_{j=2}^r(W_j^+)^{\oplus m}\right)
=Q_1^s(Q_2Q_3\cdots Q_r)^m.
$$
\end{lemma}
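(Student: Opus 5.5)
The plan is to compute the Euler class of a single character line bundle first, and then get the rest from the Whitney product formula. For the first claim, $H^*(BA_k;\mathbb{F}_2)=H^*((\mathbb{RP}^\infty)^k;\mathbb{F}_2)=\mathbb{F}_2[a_1,\ldots,a_k]$, where $a_i=w_1(L_{\lambda_i})$ is pulled back from the $i$-th factor. Under this identification, $w_1$ gives an isomorphism
\[
\operatorname{Hom}(A_k,\mathbb{Z}/2)\;\to\; H^1(BA_k;\mathbb{F}_2),\qquad q\mapsto w_1(L_q).
\]
Two facts make this precise:
\begin{itemize}
\item Real line bundles over $BA_k$ are classified by $H^1(BA_k;\mathbb{F}_2)=\operatorname{Hom}(\pi_1 BA_k,\mathbb{Z}/2)=\operatorname{Hom}(A_k,\mathbb{Z}/2)$. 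The line bundle $L_q$ corresponds exactly to the character $q$, because its monodromy around a loop $g\in A_k$ is $q(g)$.
\item Tensor products of line bundles correspond to products of characters, and $w_1(L\otimes L')=w_1(L)+w_1(L')$.
\end{itemize}
Since $\mathbb{R}_q=\bigotimes_{\varepsilon_i=1}\mathbb{R}_{\lambda_i}$, this gives $w_1(L_q)=\sum_i\varepsilon_i a_i$. For a line bundle the top Stiefel--Whitney class is $w_1$, which is by definition the mod-two equivariant Euler class $e_{A_k}(\mathbb{R}_q)$.

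For the consequences, I use that $W_j^+$ restricted to $A_k$ is the direct sum of the line representations $\mathbb{R}_{\lambda_I}$ with $|I|=j$. The associated bundle is therefore the Whitney sum of the line bundles $L_{\lambda_I}$. The Whitney product formula for total Stiefel--Whitney classes then says the top class is the product of the $w_1$'s:
\[
e_{A_k}(W_j^+)=\prod_{|I|=j}\sum_{i\in I}a_i=Q_j.
\]
Multiplicativity of the top class under direct sums then gives the final formula $Q_1^s(Q_2\cdots Q_r)^m$.

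I expect no real obstacle. The only point needing care is the identification of $H^1(BA_k)$ with the character group, including the sign and normalization conventions: $a_i$ must be defined as $w_1(L_{\lambda_i})$, so that the formula for a single character holds by construction. The additivity of $w_1$ under tensor product is then the one genuinely used fact.
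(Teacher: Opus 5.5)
Your proposal is correct and follows essentially the same route as the paper. Both write $L_q$ as a tensor product of the $L_{\lambda_i}$, use additivity of $w_1$ under tensor products of real line bundles to get $\varepsilon_1a_1+\cdots+\varepsilon_ka_k$, and then obtain $Q_j$ and $Q_1^s(Q_2\cdots Q_r)^m$ from multiplicativity of the top Stiefel--Whitney class under direct sums.
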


\begin{proof}
The character line bundle satisfies
$L_q\cong L_{\lambda_1}^{\otimes\varepsilon_1}\otimes\cdots\otimes L_{\lambda_k}^{\otimes\varepsilon_k}$.
For real line bundles over $\mathbb{F}_2$,
$w_1(L\otimes L')=w_1(L)+w_1(L')$
(additivity of $w_1$ under tensor product, since $w_1$ is the mod-two Euler class).
Therefore $w_1(L_q)=\sum_i\varepsilon_i\,w_1(L_{\lambda_i})=\sum_i\varepsilon_i a_i$.
The remaining assertions follow from multiplicativity of the top Stiefel--Whitney
class under direct sums.
\end{proof}

\begin{remark}
Lemma~\ref{lem:euler-characters} is an ordinary mod-two cohomology statement.
It uses only $w_1(L\otimes L')=w_1(L)+w_1(L')$ for real line bundles, which
holds because $w_1$ is the classifying map to $H^1(-;\mathbb{F}_2)=\operatorname{Hom}(\pi_1,\mathbb{F}_2)$
and the tensor product of characters is their sum.
This is independent of the formal group law of $MO$: we are not asserting that
the unoriented-bordism operators satisfy $\Delta_{\lambda_i+\lambda_j}=\Delta_{\lambda_i}+\Delta_{\lambda_j}$
(which requires a separate bordism calculation); rather, the equivariant Euler
\emph{characteristic class} of $\mathbb{R}_q$ in ordinary cohomology is simply
$\varepsilon_1a_1+\cdots+\varepsilon_ka_k$.
\end{remark}

\noindent{\bf Proof of Theorem \ref{thm:main-zero}.}

\begin{proof}
Write $P=Q_2R$ where $R=Q_1^sQ_2^{m-1}Q_3^m\cdots Q_r^m$ is symmetric (as a product of symmetric polynomials $Q_j$).
Let $X_{n,k}=V_{n,k}/B_k$, and let
$c:X_{n,k}\to BB_k$ classify the principal $B_k$-bundle $V_{n,k}\to X_{n,k}$.
Sending an orthonormal frame to its span gives a smooth fiber bundle
$\pi:X_{n,k}\to\operatorname{Gr}_k(\mathbb{R}^n)$ with fiber $O(k)/B_k$.
The classifying map $c_\gamma:\operatorname{Gr}_k(\mathbb{R}^n)\to BO(k)$
of the tautological bundle fits into a Cartesian square
\[
\begin{array}{ccc}
X_{n,k} & \xrightarrow{\ c\ } & BB_k \\
\pi\downarrow & & \downarrow p \\
\operatorname{Gr}_k(\mathbb{R}^n) & \xrightarrow{c_\gamma} & BO(k),
\end{array}
\]
where $p:BB_k\to BO(k)$ is the map induced by the standard representation.

Let $\mathcal{W}=V_{n,k}\times_{B_k}\bigl((W_1^+)^{\oplus s}\oplus\bigoplus_{j=2}^r(W_j^+)^{\oplus m}\bigr)$
be the associated bundle over $X_{n,k}$, and set $\bar e_j=c^*(e_j)$
for the pullback of the universal classes $e_j\in H^*(BB_k;\mathbb{F}_2)$.
By Lemma~\ref{lem:euler-characters} and naturality,
\[
 w_{\mathrm{top}}(\mathcal W)=\bar e_1^s\bar e_2^m\cdots\bar e_r^m=\bar e_2\bar e_R.
\]
Base-change naturality of the Gysin map and Proposition~\ref{prop:universal-gysin} give
\[
 \pi_!(w_{\mathrm{top}}(\mathcal W))
 =\pi_!(\bar e_2\bar e_R)
 =c_\gamma^*p_!(e_2 e_R)
 =c_\gamma^* R(\gamma_k)=R(\gamma_k).
\]
The hypothesis $P\ne0$ in $\mathcal{P}_{n,k}$ and Lemma~\ref{lem:schur-vandermonde}
give $R(\gamma_k)\ne0$, hence $w_{\mathrm{top}}(\mathcal{W})\ne0$.

A $B_k$-equivariant map $f$ in the statement descends to a section of $\mathcal{W}$.
If $f$ had no zero, the section would be nowhere zero, forcing
$w_{\mathrm{top}}(\mathcal{W})=0$, a contradiction.
\end{proof}

\section{Equipartition of measures: the functions \(g_\mu\)}

\subsection{Oriented hyperplanes in \(\mathbb R^d\)}

Put \(e_0=(1,0,\ldots,0)\in\mathbb R^{d+1}\).  For
\[
 v=(t_0,t_1,\ldots,t_d)=(t_0,w)\in\mathbb S^d,
 \qquad w=(t_1,\ldots,t_d),
\]
define
\[
 H(v)=\{x\in\mathbb R^d:\langle x,w\rangle\le t_0\}.
\]
Equivalently, given a unit normal $u\in\mathbb{S}^{d-1}$ and offset
$t\in\mathbb{R}$, the corresponding sphere point is
\[
 v = \frac{(t,u)}{\sqrt{1+t^2}}\in\mathbb{S}^d,
\]
so $w=u/\sqrt{1+t^2}$ and $t_0=t/\sqrt{1+t^2}$; the hyperplane
$H_0(v)$ is $\{x:\langle x,u\rangle=t\}$.
If \(w\ne0\), let
\[
 H_0(v)=\{x\in\mathbb R^d:\langle x,w\rangle=t_0\}.
\]
The antipodal points \(v\) and \(-v\) represent the same affine
hyperplane with the two orientations interchanged:
\[
 H_0(-v)=H_0(v),
 \qquad
 H(-v)=\{x:\langle x,w\rangle\ge t_0\}.
\]
At the two exceptional points,
\[
 H(e_0)=\mathbb R^d,
 \qquad H(-e_0)=\varnothing,
\]
whereas \(H_0(\pm e_0)\) is not defined.

As in Section 1, all measures are finite nonzero Borel measures for
which every affine hyperplane has measure zero.  Consequently,
\(H(v)\) and \(H(-v)\) partition the measure whenever
\(v\ne\pm e_0\).  The hyperplane \(H_0(v)\) bisects \(\mu\)
precisely when
\[
 \mu(H(v))=\mu(H(-v))=\frac12\mu(\mathbb R^d).
\]
In particular, a parameter \(v\) satisfying the bisection condition
cannot equal \(\pm e_0\).

For \(k\ge1\), put
\[
 S_{d,k}=(\mathbb S^d)^k.
\]
A point \(\mathbf v=(v_1,\ldots,v_k)\in S_{d,k}\) is an ordered
collection of oriented hyperplane parameters.

\subsection{The functions \(g_\mu\)}

Let
\[
 C^n=\{-1,+1\}^n.
\]
For \(\mathbf v=(v_1,\ldots,v_n)\in S_{d,n}\) and
\(\mathbf s=(s_1,\ldots,s_n)\in C^n\), define
\[
 H(\mathbf v,\mathbf s)
 =\bigcap_{i=1}^n H(s_iv_i),
 \qquad
 m_{\mathbf v}(\mathbf s)
 =\mu\bigl(H(\mathbf v,\mathbf s)\bigr).
\]
If \(r(\mathbf s)\) denotes the number of negative coordinates of
\(\mathbf s\), put
\[
 g_\mu^{(n)}(\mathbf v)
 =\sum_{\mathbf s\in C^n}
 (-1)^{r(\mathbf s)}m_{\mathbf v}(\mathbf s).
\]

Let \(A_n=(\mathbb Z/2)^n\), with generators
\(\lambda_1,\ldots,\lambda_n\), act on \(S_{d,n}\) by changing the
signs of the corresponding coordinates, and put
\[
 \omega_n=\lambda_1+\cdots+\lambda_n.
\]

\begin{lemma}\label{L31}
The function
\[
 g_\mu^{(n)}:S_{d,n}\longrightarrow\mathbb R_{\omega_n}
\]
is continuous and \(A_n\)-equivariant.  It is also symmetric in
\(v_1,\ldots,v_n\).
\end{lemma}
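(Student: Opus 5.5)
The proof splits into three claims: symmetry, equivariance, and continuity. The first two are formal consequences of the definition; continuity is the only step that uses the hypothesis on the measure.

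\emph{Symmetry.} Take a permutation $\sigma\in S_n$ and apply it simultaneously to $\mathbf v$ and $\mathbf s$. The set $H(\mathbf v,\mathbf s)=\bigcap_i H(s_iv_i)$ is unchanged, because intersection is commutative. The count $r(\mathbf s)$ is also unchanged. Hence reindexing the sum over $C^n$ by $\sigma$ gives $g_\mu^{(n)}(\sigma\mathbf v)=g_\mu^{(n)}(\mathbf v)$.

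\emph{Equivariance.} Let $\lambda_i$ act by $v_i\mapsto -v_i$. Then $H(\mathbf v',\mathbf s)=H(\mathbf v,\mathbf s')$, where $\mathbf s'$ is $\mathbf s$ with its $i$-th sign flipped. Since $\mathbf s\mapsto\mathbf s'$ is a bijection of $C^n$ and $(-1)^{r(\mathbf s')}=-(-1)^{r(\mathbf s)}$, the substitution gives $g_\mu^{(n)}(\lambda_i\mathbf v)=-g_\mu^{(n)}(\mathbf v)$. So each generator acts by $-1$, i.e.\ $A_n$ acts through the character $\omega_n=\lambda_1+\cdots+\lambda_n$, and $g_\mu^{(n)}$ is equivariant into $\mathbb R_{\omega_n}$.

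\emph{Continuity.} It suffices to show that each $\mathbf v\mapsto m_{\mathbf v}(\mathbf s)$ is continuous; I expect this to be the main obstacle. Let $\mathbf v^{(t)}\to\mathbf v$ and write $v=(t_0,w)$ for a single coordinate.
\begin{itemize}
\item If $w\ne0$, the indicator of $H(v^{(t)})$ converges pointwise to that of $H(v)$ at every $x$ off the hyperplane $H_0(v)$, since the defining inequality $\langle x,w\rangle\le t_0$ is strict there.
\item If $v=e_0$, then $H(e_0)=\mathbb R^d$, and every fixed $x$ eventually satisfies $\langle x,w^{(t)}\rangle\le t_0^{(t)}$, because $t_0^{(t)}\to1$ and $w^{(t)}\to0$.
\item If $v=-e_0$, then $H(-e_0)=\varnothing$, and the symmetric argument shows every fixed $x$ eventually lies outside $H(v^{(t)})$.
\end{itemize}
So the indicator of $H(\mathbf v^{(t)},\mathbf s)$, being a product of the individual indicators, converges to that of $H(\mathbf v,\mathbf s)$ off a finite union of hyperplanes. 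That union is $\mu$-null by the standing assumption on measures. Since $\mu$ is finite, dominated convergence gives $m_{\mathbf v^{(t)}}(\mathbf s)\to m_{\mathbf v}(\mathbf s)$.

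The points to handle carefully are the degenerate parameters $\pm e_0$, treated above, and the measure-zero hypothesis on hyperplanes, which is used exactly in the dominated-convergence step.
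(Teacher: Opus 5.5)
Your proposal is correct and follows essentially the same route as the paper. For continuity, both use almost-everywhere convergence of the indicators off the hyperplanes $H_0(v_i)$, with $\pm e_0$ treated separately, followed by dominated convergence. For equivariance and symmetry, both reindex the sum over $C^n$: flipping $s_i$, respectively permuting the summands.
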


\begin{proof}
If \(\mathbf v^{(r)}\to\mathbf v\), then for \(\mu\)-almost every
\(x\), the indicator functions of
\(H(\mathbf v^{(r)},\mathbf s)\) converge to that of
\(H(\mathbf v,\mathbf s)\).  The only possible exceptional points lie
on one of the limiting hyperplanes \(H_0(v_i)\); these have
\(\mu\)-measure zero.  At \(v_i=\pm e_0\), the defining inequality is
eventually strict for each fixed \(x\), so the same conclusion holds.
Continuity therefore follows from dominated convergence.

Replacing \(v_i\) by \(-v_i\) interchanges the terms indexed by
\(s_i=1\) and \(s_i=-1\), and hence changes the sign of
\(g_\mu^{(n)}\).  This is precisely the character
\(\omega_n\).  Finally, a permutation of the vectors \(v_i\) merely
permutes the summands in the definition, proving symmetry.
\end{proof}

For \(I=\{i_1<\cdots<i_j\}\subset[k]\), write
\[
 g_{\mu,I}(\mathbf v)
 =g_\mu^{(j)}(v_{i_1},\ldots,v_{i_j}).
\]
Let \(B_k\) act on \(S_{d,k}\) by independent sign changes and
permutations of the \(k\) factors.  Recall that
\[
 W_j^+
 =\bigoplus_{\substack{I\subset[k]\\|I|=j}}\mathbb R_{\lambda_I},
 \qquad
 \lambda_I=\sum_{i\in I}\lambda_i.
\]
Lemma \ref{L31} implies that
\[
 \mathcal G_{\mu,j}:S_{d,k}\longrightarrow W_j^+,
 \qquad
 \mathcal G_{\mu,j}(\mathbf v)
 =\sum_{\substack{I\subset[k]\\|I|=j}}
 g_{\mu,I}(\mathbf v)e_I,
\]
is \(B_k\)-equivariant: sign changes act through the characters
\(\lambda_I\), and permutations send the coordinate indexed by \(I\)
to the coordinate indexed by \(\sigma(I)\).

\begin{lemma}\label{L32}
Let \(\mu\) be a finite measure in \(\mathbb R^d\), and let
\(\mathbf v=(v_1,\ldots,v_n)\in S_{d,n}\).  Suppose that
\[
 g_{\mu,I}(\mathbf v)=0
 \qquad\text{for every nonempty subset }I\subset[n].
\]
Then the hyperplanes \(H_0(v_1),\ldots,H_0(v_n)\) divide \(\mu\)
into \(2^n\) equal parts.
\end{lemma}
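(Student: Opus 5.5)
Since all $g_{\mu,I}$ vanish, I will invert the Walsh--Fourier transform on the cube $C^n$ and show that the $2^n$ orthant measures $m_{\mathbf v}(\mathbf s)$ are all equal to $\mu(\mathbb R^d)/2^n$.

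\emph{Step 1: no hyperplane is degenerate.} If some $v_i=\pm e_0$, then $H(s_iv_i)$ is $\mathbb R^d$ for one sign and $\varnothing$ for the other. The singleton condition $g_{\mu,\{i\}}(\mathbf v)=\mu(H(v_i))-\mu(H(-v_i))=0$ would then give $\mu(\mathbb R^d)=0$, contradicting finiteness. So every $H_0(v_i)$ is a genuine hyperplane. Since $\mu$ vanishes on hyperplanes, the $2^n$ sets $H(\mathbf v,\mathbf s)$ partition $\mathbb R^d$ up to measure zero. Hence
\[
 \sum_{\mathbf s\in C^n} m_{\mathbf v}(\mathbf s)=\mu(\mathbb R^d).
\]

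\emph{Step 2: identify $g_{\mu,I}$ as a Walsh coefficient.} For $I\subset[n]$ nonempty, I claim
\[
 g_{\mu,I}(\mathbf v)=\sum_{\mathbf s\in C^n}\Bigl(\prod_{i\in I}s_i\Bigr)m_{\mathbf v}(\mathbf s).
\]
To see this, take the definition $g^{(|I|)}_\mu(v_I)=\sum_{\mathbf t\in C^I}(\prod t_i)\,\mu(\bigcap_{i\in I}H(t_iv_i))$. For each $\mathbf t$, split $\bigcap_{i\in I}H(t_iv_i)$ according to the signs $s_j$ for $j\notin I$. By Step 1 this refines it into the sets $H(\mathbf v,\mathbf s)$ with $\mathbf s|_I=\mathbf t$, up to null sets, and additivity of $\mu$ gives the formula. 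Note that $(-1)^{r(\mathbf t)}=\prod t_i$.

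\emph{Step 3: Walsh inversion.} The characters $\chi_I(\mathbf s)=\prod_{i\in I}s_i$, for $I\subseteq[n]$, form an orthogonal basis of functions $C^n\to\mathbb R$, with $\sum_{\mathbf s}\chi_I\chi_J=2^n\delta_{IJ}$. The hypothesis says that the function $m_{\mathbf v}$ is orthogonal to every $\chi_I$ with $I\ne\varnothing$. Therefore $m_{\mathbf v}$ is a multiple of $\chi_\varnothing\equiv1$, i.e.\ it is constant. By Step 1 the constant is $\mu(\mathbb R^d)/2^n$.

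Equivalently, expanding
\[
 m_{\mathbf v}(\mathbf s)=2^{-n}\sum_I\chi_I(\mathbf s)\,\hat m(I),
\]
where $\hat m(\varnothing)=\mu(\mathbb R^d)$ and $\hat m(I)=g_{\mu,I}(\mathbf v)=0$ for $I\ne\varnothing$, leaves only the constant term.

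\emph{Main obstacle.} The argument is essentially linear algebra. The only delicate point is Step 2's marginalization: one must check that the sub-family intersections decompose additively into the full orthant cells. This relies on the measure-zero hypothesis for hyperplanes and on excluding the degenerate parameters $\pm e_0$, which is why Step 1 is done first.
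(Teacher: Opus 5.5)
Your proposal is correct and follows essentially the same route as the paper: rule out the degenerate parameters $\pm e_0$ using the singleton conditions, identify $g_{\mu,I}(\mathbf v)$ with the Walsh coefficient $\widehat m(I)$ by summing over the signs outside $I$, and conclude by Walsh inversion on $C^n$. Your Step~1 spells out what the paper compresses into the remark that the $|I|=1$ equations force every $H_0(v_i)$ to be defined and the $m_{\mathbf v}(\mathbf s)$ to be the cell masses.
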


\begin{proof}
The equations with \(|I|=1\) imply that every \(H_0(v_i)\) bisects
\(\mu\); in particular, all these hyperplanes are defined.  Since
their boundaries have measure zero, the numbers
\(m_{\mathbf v}(\mathbf s)\), \(\mathbf s\in C^n\), are the measures
of the \(2^n\) cells of the arrangement.

For \(I\subset[n]\), define the Walsh coefficient
\[
 \widehat m(I)
 =\sum_{\mathbf s\in C^n}
 \left(\prod_{i\in I}s_i\right)m_{\mathbf v}(\mathbf s).
\]
Summing first over the signs with indices outside \(I\) gives
\[
 \widehat m(I)=g_{\mu,I}(\mathbf v)
 \quad\text{for }I\ne\varnothing,
 \qquad
 \widehat m(\varnothing)=\mu(\mathbb R^d).
\]
Walsh inversion on \(C^n\) yields
\[
 m_{\mathbf v}(\mathbf s)
 =2^{-n}\sum_{I\subset[n]}
 \left(\prod_{i\in I}s_i\right)\widehat m(I).
\]
All nonconstant coefficients vanish by hypothesis, and therefore
\[
 m_{\mathbf v}(\mathbf s)
 =2^{-n}\mu(\mathbb R^d)
 \qquad\text{for every }\mathbf s\in C^n.
\]
\end{proof}

The \(2^n-1\) conditions in Lemma~\ref{L32} are linearly independent
as functionals of the \(2^n\) cell masses (they are the nonconstant Walsh coefficients).

\begin{lemma}\label{L33}
Let \(1\le n\le k\le d\), let \(\mu\) be a finite measure in
\(\mathbb R^d\), and let
\(\mathbf v=(v_1,\ldots,v_k)\in S_{d,k}\).  Suppose that
\begin{equation}\tag{3.1}\label{eq:partition-one}
 g_{\mu,I}(\mathbf v)=0
 \qquad
 \text{for every }I\subset[k]\text{ with }1\le|I|\le n.
\end{equation}
Then every \(n\)-element subset of
\(H_0(v_1),\ldots,H_0(v_k)\) divides \(\mu\) into \(2^n\) equal
parts.  Moreover, \eqref{eq:partition-one} consists of
\[
 \alpha_n(k)=\sum_{j=1}^n\binom{k}{j}
\]
linearly independent conditions on the \(2^k\) cell masses.
\end{lemma}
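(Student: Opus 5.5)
The first assertion reduces directly to Lemma~\ref{L32}. Fix an $n$-element subset $J=\{j_1<\cdots<j_n\}\subset[k]$ and consider the subfamily $\mathbf v_J=(v_{j_1},\ldots,v_{j_n})\in S_{d,n}$. Every nonempty $I'\subset[n]$ corresponds to a subset $I=\{j_i:i\in I'\}\subset J$ with $1\le|I|\le n$. By the definition of $g_{\mu,I}$ through $g_\mu^{(|I|)}$ applied to the ordered subfamily, we get $g_{\mu,I'}(\mathbf v_J)=g_{\mu,I}(\mathbf v)$. Hypothesis \eqref{eq:partition-one} therefore gives exactly the vanishing required in Lemma~\ref{L32} for $\mathbf v_J$. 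Consequently $H_0(v_{j_1}),\ldots,H_0(v_{j_n})$ divide $\mu$ into $2^n$ equal parts. In particular, the $|I|=1$ conditions make all $H_0(v_i)$ well defined.

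For the counting statement, I would work in the full arrangement of the $k$ hyperplanes, which are defined and bisect $\mu$, so boundaries are null. The $2^k$ cell masses are then $m_{\mathbf v}(\mathbf s)$, $\mathbf s\in C^k$. As in the proof of Lemma~\ref{L32}, I would introduce the Walsh coefficients
\[
\widehat m(I)=\sum_{\mathbf s\in C^k}\Bigl(\prod_{i\in I}s_i\Bigr)m_{\mathbf v}(\mathbf s),\qquad I\subset[k].
\]
Summing first over the coordinates outside $I$ shows that $\widehat m(I)=g_{\mu,I}(\mathbf v)$. The key step here is that a cell of the subarrangement indexed by $I$ is the disjoint union, up to null sets, of the cells of the full arrangement with the prescribed signs on $I$. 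Hence each condition in \eqref{eq:partition-one} is the linear functional $m\mapsto\langle \chi_I,m\rangle$, where $\chi_I(\mathbf s)=\prod_{i\in I}s_i$ is a Walsh character on $C^k$.

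The characters $\chi_I$, $I\subset[k]$, are pairwise orthogonal, since $\sum_{\mathbf s}\chi_I\chi_{I'}=2^k\delta_{II'}$. Any subfamily of them is therefore linearly independent. The family with $1\le|I|\le n$ has cardinality $\sum_{j=1}^n\binom kj=\alpha_n(k)$, which gives $\alpha_n(k)$ independent conditions.

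The only point needing care is the identification of $g_{\mu,I}$ as a Walsh coefficient of the full $k$-cell vector, that is, the marginalization over coordinates outside $I$. This requires that the cells $H(\mathbf v,\mathbf s)$ for $\mathbf s\in C^k$ partition $\mathbb R^d$ up to $\mu$-null sets. That holds because each $v_i\ne\pm e_0$ once the $|I|=1$ conditions are imposed, and hyperplanes are $\mu$-null. Everything else is routine.
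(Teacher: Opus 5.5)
Your proposal is correct and follows the paper's proof: you apply Lemma~\ref{L32} to each $n$-element subfamily $J\subset[k]$, and you get the count $\alpha_n(k)$ together with linear independence because the conditions are distinct nonconstant Walsh coefficients of the $2^k$-cell mass vector. Your explicit marginalization and the orthogonality relation $\sum_{\mathbf s}\chi_I\chi_{I'}=2^k\delta_{II'}$ spell out what the paper states in one line; the partition-up-to-null-sets step in fact holds even at $v_i=\pm e_0$, since $H(e_0)=\mathbb R^d$ and $H(-e_0)=\varnothing$.
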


\begin{proof}
There is one equation for each subset \(I\subset[k]\) with
\(1\le|I|\le n\), giving the stated number.  These equations are
distinct nonconstant Walsh coefficients and hence are linearly
independent as functionals of the cell masses.  For any
\(J\subset[k]\) with \(|J|=n\), the equations indexed by the nonempty
subsets of \(J\) are exactly the hypotheses of Lemma \ref{L32}.
Applying that lemma to every such \(J\) proves the assertion.
\end{proof}

The corresponding statement for several measures is immediate.

\begin{thm}\label{th34}
Let \(1\le n\le k\le d\), and let
\(\mu_1,\ldots,\mu_m\) be finite measures in \(\mathbb R^d\).
Suppose that \(\mathbf v=(v_1,\ldots,v_k)\in S_{d,k}\) satisfies
\begin{equation}\tag{3.2}\label{eq:partition-many}
 g_{\mu_\ell,I}(\mathbf v)=0,
 \qquad
 \ell=1,\ldots,m,\quad
 I\subset[k],\quad 1\le|I|\le n.
\end{equation}
Then every \(n\)-element subset of the hyperplanes
\(H_0(v_1),\ldots,H_0(v_k)\) divides each measure \(\mu_\ell\) into
\(2^n\) equal parts.
\end{thm}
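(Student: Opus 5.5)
The statement to prove is Theorem~\ref{th34}. It follows directly from Lemma~\ref{L33}, applied one measure at a time.

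Fix $\mathbf v=(v_1,\ldots,v_k)\in S_{d,k}$ satisfying \eqref{eq:partition-many}. For each $\ell\in\{1,\ldots,m\}$, the subfamily of equations with that fixed $\ell$ is exactly \eqref{eq:partition-one} for the measure $\mu=\mu_\ell$. That is, $g_{\mu_\ell,I}(\mathbf v)=0$ for every $I\subset[k]$ with $1\le|I|\le n$.

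Lemma~\ref{L33} then shows that every $n$-element subset of $H_0(v_1),\ldots,H_0(v_k)$ divides $\mu_\ell$ into $2^n$ equal parts. The hypothesis $1\le n\le k\le d$ is the one Lemma~\ref{L33} requires, and each $\mu_\ell$ is a finite measure giving zero mass to hyperplanes, as the standing conventions demand.

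One point needs checking: the hyperplanes $H_0(v_i)$ must be the same objects for every $\ell$. They do not depend on the measure. They are well defined, with $v_i\ne\pm e_0$, because the equations with $|I|=1$ already force each $H_0(v_i)$ to bisect $\mu_1$, as in the proof of Lemma~\ref{L32}. So the same $k$ hyperplanes serve all the measures simultaneously.

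Since $\ell$ was arbitrary, the conclusion holds for every $\mu_\ell$. Nothing here is a real obstacle; the only care needed is the uniformity of the hyperplanes across the measures, just noted.
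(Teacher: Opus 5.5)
Your proof is correct and matches the paper's argument: the paper's proof is the single sentence applying Lemma~\ref{L33} separately to each measure $\mu_\ell$. Your remark that the hyperplanes $H_0(v_i)$ depend only on $\mathbf v$, and are defined because the $|I|=1$ equations rule out $v_i=\pm e_0$, is a harmless clarification that the paper leaves implicit.
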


\begin{proof}
Apply Lemma \ref{L33} separately to each measure \(\mu_\ell\).
\end{proof}

\subsection{Additional equivariant constraints}

Let \(p\in\mathbb R^d\), and write \(v=(t_0,w)\in\mathbb S^d\).
Define
\[
 g_p(v)=t_0-\langle w,p\rangle.
\]
Then \(g_p(-v)=-g_p(v)\), and \(g_p(v)=0\) precisely when
\(H_0(v)\) passes through \(p\).  Consequently,
\[
 \mathcal P_p:S_{d,k}\longrightarrow W_1^+,
 \qquad
 \mathcal P_p(v_1,\ldots,v_k)
 =\sum_{i=1}^k g_p(v_i)e_{\{i\}},
\]
is \(B_k\)-equivariant.

For a set \(S=\{p_1,\ldots,p_\ell\}\subset\mathbb R^d\), consider the
additional equations
\begin{equation}\tag{3.3}\label{eq:point-constraints}
 g_{p_a}(v_i)=0,
 \qquad a=1,\ldots,\ell,\quad i=1,\ldots,k.
\end{equation}

\begin{cor}\label{Cor3S}
If a configuration \(\mathbf v\in S_{d,k}\) satisfies
\eqref{eq:partition-many} and \eqref{eq:point-constraints}, then its
hyperplanes divide each of the \(m\) measures as in Theorem
\ref{th34}, and every hyperplane passes through every point of \(S\).
\end{cor}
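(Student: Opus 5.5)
This corollary needs no new topology; it follows by combining Theorem~\ref{th34} with the equivalences already recorded in Section~3. Suppose $\mathbf v=(v_1,\ldots,v_k)\in S_{d,k}$ satisfies both systems \eqref{eq:partition-many} and \eqref{eq:point-constraints}. I treat the two systems separately, since they constrain different features of the configuration.

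\emph{Partition conditions.} Equations \eqref{eq:partition-many} are exactly the hypothesis of Theorem~\ref{th34}. Applying that theorem gives the partition conclusion: every $n$-element subfamily of $H_0(v_1),\ldots,H_0(v_k)$ divides each $\mu_\ell$ into $2^n$ equal parts. As part of this, the $|I|=1$ equations show that each $H_0(v_i)$ bisects $\mu_1$. By the remark in Section~3.1, a bisecting parameter cannot be $\pm e_0$. So each $w_i\ne0$, and each $H_0(v_i)$ is a genuine affine hyperplane.

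\emph{Point conditions.} Fix $p_a\in S$ and an index $i$, and write $v_i=(t_0,w)$. The equation $g_{p_a}(v_i)=0$ says $t_0=\langle w,p_a\rangle$. Because $w\ne0$, this is exactly the statement $p_a\in H_0(v_i)=\{x:\langle x,w\rangle=t_0\}$. Running over all $a$ and $i$ shows that every hyperplane passes through every point of $S$.

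\emph{Potential gap.} The one step that needs care is the point constraint when $v_i=\pm e_0$. There $g_p(v_i)=\pm1\ne0$, so the equation cannot actually hold. In any case the bisection step above already excludes this, so the argument has no real obstacle.
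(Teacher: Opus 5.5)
Your proof is correct and takes the same approach as the paper: the partition conclusion is Theorem~\ref{th34} applied to \eqref{eq:partition-many}, and incidence follows from the definition of $g_p$. Your extra check that each $w_i\ne0$, via the $|I|=1$ bisection equations or via $g_p(\pm e_0)=\pm1$, is a detail the paper leaves implicit.
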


\begin{proof}
The equipartition conclusion follows from Theorem \ref{th34}, and the
incidence conclusion follows from the definition of \(g_p\).
\end{proof}

Orthogonality is encoded by another equivariant map.  Write
\[
 v_i=(t_{i0},w_i)\in\mathbb S^d,
 \qquad w_i\in\mathbb R^d,
\]
and define
\[
\begin{aligned}
 \mathcal O&:S_{d,k}\longrightarrow W_2^+,\\
 \mathcal O(v_1,\ldots,v_k)
 &=\sum_{1\le i<j\le k}
 \langle w_i,w_j\rangle e_{\{i,j\}}.
\end{aligned}
\]
Changing the sign of \(v_i\) changes the sign of every coordinate
indexed by a pair containing \(i\), and permutations act by
permuting the pairs.  Thus \(\mathcal O\) is \(B_k\)-equivariant.
When the equations with \(|I|=1\) in
\eqref{eq:partition-many} hold, all \(w_i\) are nonzero.  In that
case, \(\mathcal O(\mathbf v)=0\) precisely when the hyperplanes
\(H_0(v_1),\ldots,H_0(v_k)\) are mutually orthogonal.

\begin{cor}\label{Cor35}
If a configuration \(\mathbf v\in S_{d,k}\) satisfies
\eqref{eq:partition-many} and \(\mathcal O(\mathbf v)=0\), then its
hyperplanes are mutually orthogonal and every \(n\)-element subset
divides each of the \(m\) measures into \(2^n\) equal parts.
\end{cor}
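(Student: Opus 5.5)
The statement to prove is Corollary~\ref{Cor35}: if $\mathbf v\in S_{d,k}$ satisfies \eqref{eq:partition-many} and $\mathcal O(\mathbf v)=0$, then the hyperplanes are mutually orthogonal and every $n$-element subfamily equipartitions each measure. The plan is to split this into its two conclusions and obtain each from results already in hand.

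\textbf{Step 1: the equipartition conclusion.} Assume \eqref{eq:partition-many}. By Theorem~\ref{th34}, every $n$-element subset of $H_0(v_1),\ldots,H_0(v_k)$ divides each $\mu_\ell$ into $2^n$ equal parts.

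\textbf{Step 2: the hyperplanes are well defined.} Among the equations \eqref{eq:partition-many} are those with $|I|=1$. For each $i$, the equation $g_{\mu_1,\{i\}}(\mathbf v)=0$ says
\[
\mu_1(H(v_i))-\mu_1(H(-v_i))=0 .
\]
This says $H_0(v_i)$ bisects $\mu_1$. As observed in Section~3.1, this rules out $v_i=\pm e_0$, since there one side has full mass and the other has none, and $\mu_1(\mathbb R^d)>0$. Writing $v_i=(t_{i0},w_i)$, we therefore get $w_i\neq 0$ for every $i$. So each $H_0(v_i)$ is a genuine affine hyperplane with normal vector $w_i$.

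\textbf{Step 3: orthogonality.} The condition $\mathcal O(\mathbf v)=0$ means $\langle w_i,w_j\rangle=0$ for all $i<j$. Two affine hyperplanes are orthogonal exactly when their normal vectors are orthogonal. Hence the hyperplanes $H_0(v_1),\ldots,H_0(v_k)$ are mutually orthogonal.

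\textbf{Main obstacle.} Step~2 is the only point needing care. Orthogonality of the vectors $w_i$ carries geometric meaning only once every $w_i$ is known to be nonzero. That nonvanishing comes from the bisection equations, which is the remark preceding the statement of the corollary. Everything else is a direct citation of Theorem~\ref{th34}.
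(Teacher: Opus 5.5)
Your proposal is correct and matches the paper's proof: the paper cites Theorem~\ref{th34} for the equipartition and uses the remark before the corollary (the $|I|=1$ equations force every $w_i\neq0$, so $\mathcal O(\mathbf v)=0$ means the normals are orthogonal), which is exactly what your Steps 1--3 spell out. One small point of order in Step 2: first exclude $v_i=\pm e_0$ directly from $g_{\mu_1,\{i\}}(\mathbf v)=\pm\mu_1(\mathbb R^d)\neq0$, and only then speak of $H_0(v_i)$ bisecting $\mu_1$, since $H_0(v_i)$ is undefined at those points.
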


\begin{proof}
This follows from Theorem \ref{th34} and the preceding description of
the zero set of \(\mathcal O\).
\end{proof}

\section{Proof of the theorem on orthogonal partitions}

\subsection{The case $n=1$: ham-sandwich with orthogonal normals}

\begin{prop}\label{prop:n1}
$\Delta^*(m,k,1)=m+k-1$.
\end{prop}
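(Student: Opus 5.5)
We prove the two inequalities $\Delta^*(m,k,1)\le m+k-1$ and $\Delta^*(m,k,1)\ge m+k-1$ separately. The upper bound can be obtained either from Theorem~\ref{Pknm} or by an inductive ham-sandwich argument. The lower bound comes from an explicit family of ball measures whose bisecting hyperplanes all contain a fixed affine flat.

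\emph{Upper bound.} Let $d=m+k-1$. Algebraically, $P_{k,1}=Q_1=a_1a_2\cdots a_k$, so $(P_{k,1})^m=a_1^m\cdots a_k^m$. This monomial survives in $\mathcal R_{d,k}$ exactly when $m\le d-k+1$, the smallest cap. That holds for $d=m+k-1$, so Theorem~\ref{Pknm} with $n=1$ gives $k$ mutually orthogonal hyperplanes, each bisecting every measure.

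As a sanity check, I would also give the direct argument. Choose the normals $u_1,\dots,u_k$ inductively. Having chosen orthonormal $u_1,\dots,u_{i-1}$, let $U_i=\{u_1,\dots,u_{i-1}\}^\perp$, so $\dim U_i=d-i+1\ge d-k+1=m$. Push the $m$ measures forward under the orthogonal projection $\pi_i:\mathbb R^d\to U_i$. The pushed-forward measures still give measure zero to hyperplanes of $U_i$, because the preimage of such a hyperplane is a hyperplane of $\mathbb R^d$. Apply the ham-sandwich theorem inside an $m$-dimensional subspace of $U_i$ (after a further projection) to obtain a hyperplane with unit normal $u_i\in U_i$ bisecting all projections. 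Its preimage under $\pi_i$ is a hyperplane in $\mathbb R^d$ with normal $u_i$ that bisects all $m$ measures and is orthogonal to the earlier ones.

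\emph{Lower bound.} Suppose $k\le d\le m+k-2$. Take $m$ uniform measures on small disjoint balls centred at points $p_1,\dots,p_m$ in general position; these measures give zero mass to every hyperplane. A hyperplane bisects the uniform measure on a ball if and only if it passes through the centre, by central symmetry and because a hyperplane missing the centre leaves strictly more than half on the centre's side.

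If $m\ge d+1$, the centres contain $d+1$ affinely independent points, and no hyperplane contains them all. So no bisecting hyperplane exists, a contradiction.

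If $m\le d$, the affine span $F$ of the centres has dimension $m-1$. Every bisecting hyperplane contains $F$, so its normal lies in the orthogonal complement of the direction space of $F$, which has dimension $d-m+1\le k-1$. Hence at most $k-1$ mutually orthogonal normals are possible, again a contradiction. Therefore $\Delta^*(m,k,1)\ge m+k-1$.

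The only point needing care is the claim that bisecting a ball's uniform measure forces the hyperplane through the centre, together with the degenerate case $m>d+1$. Both are elementary, so I expect no serious obstacle.
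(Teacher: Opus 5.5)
Your proof is correct. Its skeleton matches the paper's: sequential ham-sandwich for the upper bound, and for the lower bound, measures that force every bisecting hyperplane through $m$ fixed points plus a count of the available normal directions. Your execution, however, differs exactly where the paper's written argument fails.

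\textbf{Upper bound.} The paper fixes an arbitrary unit vector $u_k\perp u_1,\dots,u_{k-1}$. It then applies ham-sandwich to the one-dimensional marginals along $u_k$ to get a single offset bisecting all $m$ measures. That only works for $m=1$, since one point on a line cannot in general bisect two different marginals. Correspondingly, the dimension $m+k-1$ is used there only to guarantee that $u_k$ exists. You instead let ham-sandwich choose the normal itself inside an $m$-dimensional subspace of $U_i=\{u_1,\dots,u_{i-1}\}^\perp$. That is precisely where $\dim U_i=d-i+1\ge m$ enters, so this is the right argument, and it needs no induction on the dimension.

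\textbf{Lower bound.} The paper uses point masses at $e_1,\dots,e_m\in\mathbb R^{m+k-2}$. These are not admissible, since hyperplanes must be null, and they only fit when $k\ge2$. Your construction fixes all of this:
\begin{itemize}
\item uniform measures on small balls are admissible, and your ``bisects iff through the centre'' step is justified;
\item the case $m\ge d+1$ handles $k=1$;
\item you cover every $d$ with $k\le d\le m+k-2$, not only $d=m+k-2$.
\end{itemize}
You might add that for $d<k$ the property fails trivially, since there are no $k$ mutually orthogonal hyperplanes at all.

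\textbf{Caution on the algebraic shortcut.} The paper proves Theorem~\ref{Pknm} by invoking Theorem~\ref{thm:main-zero}. That theorem assumes $r\ge2$ and factors $P=Q_2R$, so the case $n=1$ is not actually covered by that proof. It can be repaired: append the zero map into $W_2^+$, and note that $Q_1^{m-1}Q_2\ne0$ in $\mathcal P_{d,k}$ if and only if $d\ge m+k-1$, by Lemma~\ref{lem:schur-vandermonde}. As it stands, though, your direct argument, not the algebraic one, should carry the upper bound.
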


\begin{proof}
\textit{Upper bound: $\Delta^*(m,k,1)\le m+k-1$.}
We construct the required $k$ mutually orthogonal hyperplanes in $\mathbb{R}^{m+k-1}$
by induction on $k$.  The base case $k=1$ is the ham-sandwich theorem
for $m$ measures in $\mathbb{R}^m$: there exists a hyperplane bisecting all $m$
measures simultaneously~\cite{ST}.

For the inductive step, assume we have $k-1$ mutually orthogonal hyperplanes
$H_1,\ldots,H_{k-1}$ in $\mathbb{R}^{m+k-1}$ with common normal directions
$u_1,\ldots,u_{k-1}$, each bisecting all $m$ measures.  Let $u_k$ be a unit
vector orthogonal to $u_1,\ldots,u_{k-1}$; such $u_k$ exists since
$\dim\mathbb{R}^{m+k-1}=m+k-1\ge k$.  By the ham-sandwich theorem applied to
the $m$ measures restricted to the $1$-dimensional $u_k$-axis
(i.e.\ to their marginals under the projection $x\mapsto\langle x,u_k\rangle$),
there exists an offset $t_k\in\mathbb{R}$ such that the hyperplane
$H_k=\{x:\langle x,u_k\rangle=t_k\}$ bisects all $m$ measures.
By construction $H_k$ is orthogonal to $H_1,\ldots,H_{k-1}$.

\textit{Lower bound: $\Delta^*(m,k,1)\ge m+k-1$.}
We exhibit $m$ measures in $\mathbb{R}^{m+k-2}$ for which no $k$ mutually
orthogonal hyperplanes each bisect all $m$ measures.

Let $e_1,\ldots,e_{m+k-2}$ be the standard basis of $\mathbb{R}^{m+k-2}$.
For $\ell=1,\ldots,m$ let $\mu_\ell$ be a unit mass concentrated at
$p_\ell=e_\ell$ (a point mass).  A hyperplane $H$ bisects $\mu_\ell$ only if
$p_\ell\in H$, so any $k$ bisecting hyperplanes must all pass through
$p_1,\ldots,p_m$.

For $k$ mutually orthogonal hyperplanes through $p_1,\ldots,p_m$ to exist in
$\mathbb{R}^{m+k-2}$, their normal directions $u_1,\ldots,u_k$ must be
orthonormal and each $u_i$ must be orthogonal to $p_\ell-p_1$ for all $\ell$.
The vectors $p_2-p_1,\ldots,p_m-p_1$ span a subspace of dimension $m-1$; the
normals $u_1,\ldots,u_k$ must lie in its orthogonal complement, of dimension
$(m+k-2)-(m-1)=k-1$.  But $k$ orthonormal vectors cannot fit in a
$(k-1)$-dimensional space, a contradiction.
Hence $\Delta^*(m,k,1)\ge m+k-1$.
\end{proof}

\subsection{Lower bound}

\begin{lemma} \label{L41}
For $2\le n\le k$,
$$
\Delta^*(m,k,n)\ge \left\lceil{\frac{m\,\alpha_n(k)}{k}+\frac{k-1}{2}}\right\rceil.
$$
\end{lemma}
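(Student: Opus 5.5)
We will prove the bound by a dimension count. The plan is to choose $m$ generic measures for which a valid configuration of hyperplanes forces a transversal system of equations to have a solution on a manifold of dimension $dk-\binom{k}{2}$. Sard's theorem then forces that dimension to be at least the number of equations.

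\textbf{Configuration space and equations.} Parametrize a $k$-tuple of mutually orthogonal affine hyperplanes by an orthonormal frame $(u_1,\ldots,u_k)\in V_{d,k}$ together with offsets $(t_1,\ldots,t_k)\in\mathbb{R}^k$. This gives a smooth manifold $M=V_{d,k}\times\mathbb{R}^k$ of dimension
\[
 dk-\binom{k+1}{2}+k \;=\; dk-\binom{k}{2}.
\]
By Lemma~\ref{L33}, the requirement that every $n$-subfamily equipartitions $\mu_\ell$ is equivalent to $\alpha_n(k)$ equations $g_{\mu_\ell,I}=0$ for $1\le|I|\le n$. Over all $m$ measures this gives $m\,\alpha_n(k)$ equations. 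We therefore obtain a map
\[
 G=(g_{\mu_\ell,I})_{\ell,I}:M\to\mathbb{R}^{m\alpha_n(k)},
\]
and a valid configuration is a point of $G^{-1}(0)$.

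\textbf{Choice of measures.} Take the $\mu_\ell$ with smooth positive densities, so that $G$ is $C^\infty$ on the region where the hyperplanes meet the supports. Then perturb the family so that $0$ is a regular value of $G$. The standard device is to add small independent parameters: replace $\mu_\ell$ by $\mu_\ell+\sum_I \varepsilon_{\ell,I}\,\nu_{\ell,I}$, where the $\nu_{\ell,I}$ are fixed signed smooth bumps. Choose the bumps so that the map $(x,\varepsilon)\mapsto G_\varepsilon(x)$ is a submersion in the $\varepsilon$-directions. This works because each $g_{\mu,I}$ is linear in $\mu$, and a bump placed inside a single cell changes exactly the Walsh coefficients with the sign pattern of that cell. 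The $\alpha_n(k)$ Walsh functionals are independent, so the $\varepsilon$-derivative is onto. The parametric transversality theorem (Sard) then gives some $\varepsilon$ for which $0$ is a regular value of $G_\varepsilon$.

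\textbf{Counting and the main obstacle.} If $dk-\binom{k}{2}<m\alpha_n(k)$, a regular value of $G_\varepsilon$ can have no preimage, so no orthogonal configuration exists and $\Delta^*(m,k,n)>d$. Consequently $\Delta^*(m,k,n)\ge d$ forces
\[
 dk\ge m\alpha_n(k)+\binom{k}{2}, \qquad\text{that is,}\qquad d\ge \frac{m\alpha_n(k)}{k}+\frac{k-1}{2}.
\]
Taking the ceiling gives the lemma.

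The main obstacle is justifying the submersion claim uniformly over $M$. Two issues arise.
\begin{enumerate}
\item The cells containing the bumps move with the configuration. I would handle this by a fine, locally finite family of bumps (a cover argument), so that at each point of $M$ some subfamily sits inside all $2^k$ cells. This is possible because $n\le k\le d$ makes all $2^k$ cells of $k$ orthogonal hyperplanes nonempty open sets.
\item The noncompactness of $M$ in the offsets. This is harmless, since equipartition forces the hyperplanes to meet the compact supports, so only a compact part of $M$ matters.
\end{enumerate}
A further point to check is that the perturbed measures remain positive. Keeping $\varepsilon$ small and the base densities bounded below on the bump supports ensures this.
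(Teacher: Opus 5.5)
Your proposal is correct and follows essentially the same route as the paper: a Sard/parametric-transversality count on $V_{d,k}\times\mathbb{R}^k$ (dimension $dk-\binom{k}{2}$) against the $m\,\alpha_n(k)$ Walsh equations, with independent small bump perturbations for each measure making the map a submersion in the parameter directions, and a finite cover of a compact set of configurations (the paper combines the $2^k$ cell bumps centred at $\sum_i(t_i+s_i)u_i$ into Walsh combinations $h^z_I$ with identity Jacobian, which is your spanning argument made explicit). The one place to tighten is your two technical items, which do not fit together as written: ``smooth positive densities'' and ``compact supports'' conflict, and nonemptiness of the cells does not suffice because the bumps must lie where the base density is bounded below; the paper settles both at once by taking an everywhere-positive Gaussian base with perturbations of $L^1$-norm less than $1/8$, which forces every bisecting offset into $|t_i|<2$ and so keeps all relevant cell bumps inside a fixed ball.
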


\begin{proof}
We show that for $d$ below the stated threshold there exist $m$ admissible
measures in $\mathbb{R}^d$ admitting no configuration of $k$ mutually
orthogonal hyperplanes that $n$-wise equipartitions all of them.

\medskip

\noindent{\bf Step 1: Configuration space and equations.}
Let $X = V_{d,k}\times\mathbb{R}^k$ with
$$
D = \dim X = dk - \tfrac{k(k-1)}{2},
$$
and let $\mathcal{I}=\{I\subset[k]:1\le|I|\le n\}$,
$a=|\mathcal{I}|=\alpha_n(k)$.
A point $z=(u,t)\in X$ determines $k$ affine hyperplanes
$H_i(z)=\{x:\langle u_i,x\rangle=t_i\}$ with mutually orthogonal
normals.  For a probability density $\rho_\ell$ and $I\in\mathcal{I}$,
the Walsh coordinate is
$$
F_{\ell,I}(\rho,z)
=\int_{\mathbb{R}^d}
\prod_{i\in I}\mathrm{sgn}(\langle u_i,x\rangle-t_i)\,\rho_\ell(x)\,dx.
$$
By Walsh inversion (Lemma~\ref{L33}), vanishing of all
$F_{\ell,I}(\rho,z)=0$ for $1\le|I|\le n$ is equivalent to
$n$-wise equipartition of $\mu_\ell$ by the hyperplanes.
There are $N=ma$ target coordinates in total.

\medskip

\noindent{\bf Step 2: Compact zero set.}
Let $\gamma_d=(2\pi)^{-d/2}e^{-\|x\|^2/2}$ be the standard Gaussian.
For densities with $\|\rho_\ell-\gamma_d\|_1<1/8$, every bisecting
hyperplane satisfies $|t_i|<2$.  Hence every zero of the joint test map
lies in the compact set $K=V_{d,k}\times[-2,2]^k\subset X$.

\medskip

\noindent{\bf Step 3: Explicit local perturbations.}
Fix $z=(u,t)\in K$ and $R_0=4\sqrt{k}$.  For $s\in\{-1,1\}^k$,
place a smooth radially symmetric bump $\eta_s$ of mass~$1$
supported in a ball of radius $\tfrac{1}{8}$ about
$x_s=\sum_i(t_i+s_i)u_i$.  Define
$$
h^z_I(x) = 2^{-k}\sum_{s\in\{-1,1\}^k}
\prod_{i\in I}s_i\cdot\eta_s(x).
$$
Walsh orthogonality gives $\int h^z_I\,dx=0$ and
$\int\phi_J(z,x)\,h^z_I(x)\,dx=\delta_{IJ}$ for all $I,J\in\mathcal{I}$,
where $\phi_J(z,x)=\prod_{i\in J}\mathrm{sgn}(\langle u_i,x\rangle-t_i)$.
Thus the derivative of $F_{\ell,I}$ with respect to the $\ell$-th
density in the direction $h^z_J$ is $\delta_{IJ}$~: the
$a\times a$ Jacobian in the density variables is the identity.

\medskip

\noindent{\bf Step 4: Finite cover and admissible densities.}
By continuity, the identity Jacobian persists on an open neighborhood
$U_z$ of $z$.  Cover $K$ by finitely many such neighborhoods
$U_{z_1},\ldots,U_{z_L}$, and let $\{h_1,\ldots,h_{La}\}$ be the
union of the $La$ frozen perturbation functions (one set of $a$ functions
per covering point).  Each measure $\mu_\ell$ gets its own independent
copy of these $La$ functions, giving $mLa$ coefficient variables in total.
Set
$$
M_{\mathrm{tot}}=\sup_x\sum_{q=1}^{La}|h_q(x)|,\quad
\Lambda_{\mathrm{tot}}=\sum_{q=1}^{La}\|h_q\|_1,
$$
and $\varepsilon_0=\min\!\bigl(\beta/(2M_{\mathrm{tot}}),\,1/(8\Lambda_{\mathrm{tot}})\bigr)$,
where $\beta=\min_S\gamma_d>0$ and $S$ is the union of supports.
For coefficient arrays $c=(c_{\ell q})\in(-\varepsilon_0,\varepsilon_0)^{mLa}$, set
$$
\rho_{\ell,c}(x)=\gamma_d(x)+\sum_{q=1}^{La} c_{\ell q}h_q(x).
$$
The choice of $M_{\mathrm{tot}}$ ensures each $\rho_{\ell,c}$ is
strictly positive; the choice of $\Lambda_{\mathrm{tot}}$ ensures
$\|\rho_{\ell,c}-\gamma_d\|_1<1/8$.  Each $\rho_{\ell,c}$ is a smooth
probability density giving every hyperplane measure zero.

\medskip

\noindent{\bf Step 5: Sard's theorem.}
The universal test map
$\mathcal{T}:(-\varepsilon_0,\varepsilon_0)^{mLa}\times U\to\mathbb{R}^N$,
$(c,z)\mapsto(F_{\ell,I}(\rho_c,z))$,
is smooth and its derivative in the $c$-variables alone is surjective
everywhere (different measures have independent coefficient blocks, and
each block has identity Jacobian by Step~3).  So $\mathcal{T}$ is a
submersion and its zero set $\mathcal{Z}$ is a smooth manifold of
dimension $mLa+D-N$.

Suppose $N>D$.  Then $\dim\mathcal{Z}<mLa=\dim(-\varepsilon_0,\varepsilon_0)^{mLa}$,
so the projection $\pi:\mathcal{Z}\to(-\varepsilon_0,\varepsilon_0)^{mLa}$
has every point critical.  By Sard's theorem, $\pi(\mathcal{Z})$ has
Lebesgue measure zero.  Choosing $c$ outside $\pi(\mathcal{Z})$ gives
$m$ admissible measures with no equipartitioning configuration.

The condition $N>D$ means $m\,\alpha_n(k)>dk-k(k-1)/2$, i.e.\
$d<m\,\alpha_n(k)/k+(k-1)/2$, proving the lower bound.
\end{proof}

Recall from Section~1 that for $1\le j\le k$ we have set
\[
 Q_j = Q_j(a_1,\ldots,a_k)
 :=\prod_{\substack{I\subset[k]\\|I|=j}}\!\!\left(\sum_{i\in I}a_i\right),
\]
so in particular $Q_1=a_1\cdots a_k$ and $Q_2=\prod_{1\le i<j\le k}(a_i+a_j)$.
For $1\le n\le k$ define
\[
 P_{k,n}(a_1,\ldots,a_k):=\prod_{j=1}^n Q_j(a_1,\ldots,a_k)
 = Q_1 Q_2\cdots Q_n.
\]
It is clear that
$$
\deg Q_j=\binom{k}{j}, \quad \deg P_{k,n}
=\deg Q_1+\cdots+\deg Q_n=\alpha_n(k). \eqno(4.1)
$$ 

Throughout Section~4, we write $d^*(m,k,n)$ for the minimal integer $d\ge0$
such that $(P_{k,n})^m\ne0$ in $\mathcal{R}_{d,k}$.
By Theorem~\ref{Pknm}, $\Delta^*(m,k,n)\le d^*(m,k,n)$;
the two quantities need not be equal.

\noindent{\bf Proof of Theorem~\ref{Pknm}.}

\begin{proof}[Proof of Theorem~\ref{Pknm}]
Note that $\mathcal{R}_{d,k}=\mathcal{R}(d,d-1,\ldots,d-k+1)$,
so the hypothesis is exactly that $(P_{k,n})^m\ne0$ in
$\mathcal{R}(d,d-1,\ldots,d-k+1)$.

\noindent{\bf Step 1: Reducing to a nonvanishing condition.}
Since $P_{k,n}=Q_1 Q_2\cdots Q_n$, write
$$
R(a_1,\ldots,a_k):= Q_2\cdots Q_n\cdot(P_{k,n})^{m-1},
\quad\text{so that}\quad P_{k,n}^m= Q_1\cdot R.
$$
By assumption $P_{k,n}^m\ne0$ in $\mathcal{R}(d,d-1,\ldots,d-k+1)$; since $Q_1 = a_1\cdots a_k$ lowers each degree by one, it follows that
\begin{equation}\label{eq:R-nonzero}\tag{4.2}
 R(a_1,\ldots,a_k) \ne0 \; \mbox{ in } \; \mathcal  P(d-1,d-2,\ldots,d-k).
\end{equation}

\noindent{\bf Step 2: The equivariant map on $V_{d,k}$ (smooth case).}
Assume first that each $\mu_\ell$ has a strictly positive smooth density.
For every unit vector $u\in\mathbb{S}^{d-1}$ there is a unique median offset
$h_\ell(u)$ satisfying $\mu_\ell\{x:\langle u,x\rangle\le h_\ell(u)\}=\mu_\ell(\mathbb{R}^d)/2$;
this function is continuous and odd: $h_\ell(-u)=-h_\ell(u)$.

On the Stiefel manifold $V_{d,k}$, the unit normals $u_1,\ldots,u_k$ are
already mutually orthogonal.  Using the first measure's median map
$h_1$, fix the offset $t_i=h_1(u_i)$ for each $i$; this is $B_k$-equivariant.
The remaining conditions ($|I|=1$ for $\mu_2,\ldots,\mu_m$, and $2\le|I|\le n$
for all $\mu_\ell$) define a continuous $B_k$-equivariant map
\[
 F: V_{d,k}\longrightarrow
 (W_1^+)^{\oplus(m-1)}
 \oplus\bigoplus_{j=2}^n(W_j^+)^{\oplus m},
\]
with Euler polynomial $R = Q_1^{m-1}(Q_2\cdots Q_n)^m$.
Condition~\eqref{eq:R-nonzero} and Theorem~\ref{thm:main-zero} imply
that $F$ has a zero, which by Theorem~\ref{th34} gives the required
equipartition.

\medskip

\noindent{\bf Step 3: Gaussian approximation (general case).}
For general admissible $\mu_\ell$, replace each by
$\mu_\ell*\mathcal{N}(0,\varepsilon^2 I)$.  These convolutions have smooth
positive densities, so by Step~2 there exists a frame
$\mathbf{u}^\varepsilon\in V_{d,k}$ giving the required $n$-wise
equipartition of all $m$ convolved measures.
The offsets $t_i^\varepsilon=h_1^\varepsilon(u_i^\varepsilon)$ are
uniformly bounded (the half-mass ball of $\mu_1*\mathcal{N}(0,\varepsilon^2I)$
has bounded radius, uniformly in $\varepsilon\le1$).
By compactness, a subsequence converges to some
$(\mathbf{u}^0,t^0)\in V_{d,k}\times\mathbb{R}^k$.
Since the limiting hyperplanes have $\mu_\ell$-measure zero, dominated
convergence passes the equipartition equalities to the limit.
\end{proof}

\subsection{Upper bound: the nonvanishing lemmas}

The algebraic upper bounds in cases~(i) and~(iii) of Theorem~\ref{th15},
as well as the special case $n=2$, $m=2t-1$, reduce via Theorem~\ref{Pknm}
to proving $(P_{k,n})^m \ne 0$ in $\mathcal{R}(d^*,d^*-1,\ldots,d^*-k+1)$
with $d^* = m + (\beta_n(k)-1)\,2^{\lfloor\log_2 m\rfloor}$.
For general $m$ and $n=2$, the geometric upper bound is taken
directly from~\cite{Mejia25}.
The following two auxiliary lemmas are used throughout.

\begin{lemma} \label{L43}
Let $2\le n \le k$.  The monomial
$$
M_0 := a_1^{\beta_n(k)}\,a_2^{\beta_{n}(k-1)}\cdots a_{k-1}^{\beta_{n}(2)}\,a_k^{\beta_n(1)}
$$
appears with nonzero coefficient in $P_{k,n}(a_1,\ldots,a_k)$
in the polynomial ring $\mathbb{F}_2[a_1,\ldots,a_k]$.
Consequently, $M_0$ is the leading monomial of $P_{k,n}$ in the
lexicographic order $a_1>\cdots>a_k$.
\end{lemma}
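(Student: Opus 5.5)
The plan is to prove the two assertions of Lemma~\ref{L43} together, by computing the leading monomial of $P_{k,n}$ in lexicographic order directly from its factorization into linear forms. The key tool is the standard fact that lexicographic order with $a_1>\cdots>a_k$ is a monomial order. For such an order the leading term of a product over a field is the product of the leading terms of the factors, since $\mathbb{F}_2[a_1,\ldots,a_k]$ is an integral domain and leading coefficients multiply. Consequently the leading monomial of $P_{k,n}$ occurs with coefficient exactly $1$, so it does not cancel. Once we identify this leading monomial as $M_0$, both claims of the lemma follow at once.

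\textbf{Step 1: the factors.} By definition, $P_{k,n}=Q_1\cdots Q_n$ is the product of the linear forms $\ell_I=\sum_{i\in I}a_i$ over all $I\subseteq[k]$ with $1\le|I|\le n$. Each such $I$ occurs exactly once. In lex order the leading monomial of $\ell_I$ is $a_{\min I}$, with coefficient $1$.

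\textbf{Step 2: the leading monomial of the product.} By multiplicativity of leading terms,
\[
\mathrm{LM}(P_{k,n})=\prod_{1\le|I|\le n}a_{\min I}=\prod_{i=1}^k a_i^{N_i},
\]
where $N_i$ is the number of subsets $I$ with $\min I=i$ and $|I|\le n$.

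\textbf{Step 3: counting.} Such a set has the form $I=\{i\}\cup J$ with $J\subseteq\{i+1,\ldots,k\}$ and $|J|\le n-1$. Hence
\[
N_i=\sum_{t=0}^{n-1}\binom{k-i}{t}=\beta_n(k-i+1).
\]
This gives exponent $\beta_n(k)$ for $a_1$, $\beta_n(k-1)$ for $a_2$, and so on, down to $\beta_n(1)=1$ for $a_k$. These are precisely the exponents of $M_0$.

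Therefore $M_0$ is the lex-leading monomial of $P_{k,n}$, and its coefficient is the product of the leading coefficients of the factors, which is $1\ne0$ in $\mathbb{F}_2$.

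There is no serious obstacle in this argument. The only points needing care are the multiplicativity of leading terms, which holds for any monomial order over a field, and the counting identity in Step~3, where one must check the indexing of $\beta_n$ against the exponents listed in $M_0$.
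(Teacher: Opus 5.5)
Your proof is correct and follows essentially the paper's argument. The paper first computes the lex-leading monomial of each $Q_j$ as $\prod_i a_i^{\binom{k-i}{j-1}}$, taking $a_{\min I}$ from each linear factor, and then multiplies over $j=1,\ldots,n$; this yields the same exponent $\sum_{j=1}^n\binom{k-i}{j-1}=\beta_n(k-i+1)$ that you obtain by counting all subsets with $\min I=i$ at once, and multiplicativity of leading terms then gives coefficient $1$.
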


\begin{proof}
Write $P_{k,n}=Q_1 Q_2\cdots Q_n$.
In the lexicographic order $a_1>\cdots>a_k$,
the leading monomial of $Q_j=\prod_{|I|=j}(\sum_{i\in I}a_i)$
is obtained by taking, in each factor $\sum_{i\in I}a_i$,
the term $a_{\min I}$.  The factors of $Q_j$ indexed by all $j$-element
subsets $I\subset[k]$ then contribute $a_i^{\binom{k-i}{j-1}}$ to the
leading monomial, since $a_i = a_{\min I}$ precisely for those $I$
that contain $i$ but no element smaller than $i$, and there are
$\binom{k-i}{j-1}$ such subsets.  Hence the leading monomial of $Q_j$ is
$$
\prod_{i=1}^k a_i^{\binom{k-i}{j-1}}.
$$
Multiplying over $j=1,\ldots,n$ gives the leading monomial of $P_{k,n}$:
$$
\prod_{i=1}^k a_i^{\,\sum_{j=1}^n\binom{k-i}{j-1}}
=\prod_{i=1}^k a_i^{\,\beta_n(k-i+1)}
=a_1^{\beta_n(k)}\,a_2^{\beta_n(k-1)}\cdots a_k^{\beta_n(1)} = M_0.
$$
Since the leading monomial of a product equals the product of the leading
monomials (over any field), no cancellation occurs, so the coefficient
of $M_0$ in $P_{k,n}$ is $1\ne0$ in $\mathbb{F}_2$.
\end{proof}

\begin{lemma} \label{L44}
Let $2\le n \le k$, $0\le i\le k-1$, $q\ge0$, and $0\le r<2^q$.  Define
$$
d_i:=2^q\beta_{n}(k-i)+r\beta_{n}(i+1).
$$
Then $d_0-d_i \ge i$.
\end{lemma}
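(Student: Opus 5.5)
We must show $d_0-d_i\ge i$, where $d_0=2^q\beta_n(k)+r\beta_n(1)=2^q\beta_n(k)+r$ (since $\beta_n(1)=\sum_{j=0}^{n-1}\binom{0}{j}=1$). Then
\[
d_0-d_i=2^q\bigl(\beta_n(k)-\beta_n(k-i)\bigr)-r\bigl(\beta_n(i+1)-1\bigr).
\]
The plan is to use $r\le 2^q-1$ to bound the negative term and reduce to a purely combinatorial inequality between the $\beta_n$ values. The case $i=0$ is trivial, since the difference is $0$. Assume $i\ge1$.

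\textbf{Step 1.} Since $\beta_n(i+1)-1\ge0$, we have $r(\beta_n(i+1)-1)\le(2^q-1)(\beta_n(i+1)-1)$. Hence
\[
d_0-d_i\ge 2^q\bigl(\beta_n(k)-\beta_n(k-i)-\beta_n(i+1)+1\bigr)+\beta_n(i+1)-1 .
\]

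\textbf{Step 2.} The key claim is the superadditivity-type inequality
\[
\beta_n(k)+1\ge\beta_n(k-i)+\beta_n(i+1)\qquad(0\le i\le k-1).
\]
Here $\beta_n(N)$ counts subsets of size $<n$ of an $(N-1)$-set. Split $[k-1]$ into disjoint parts $A$ of size $k-i-1$ and $B$ of size $i$. Subsets of $A$ of size $<n$ and subsets of $B$ of size $<n$ are all subsets of $[k-1]$ of size $<n$. The empty set is counted in both families, so $\beta_n(k)\ge\beta_n(k-i)+\beta_n(i+1)-1$.

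Alternatively, one can use Vandermonde: $\binom{k-1}{j}\ge\binom{k-i-1}{j}+\binom{i}{j}$ for $j\ge1$. The bracket in Step 1 is therefore $\ge0$, so $d_0-d_i\ge\beta_n(i+1)-1$.

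\textbf{Step 3.} Since $n\ge2$, $\beta_n(i+1)\ge\binom{i}{0}+\binom{i}{1}=1+i$, so $\beta_n(i+1)-1\ge i$, which concludes.

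One subtlety: if $q=0$ then $r=0$ and Step 1 is fine. The resulting bound $d_0-d_i\ge\beta_n(k)-\beta_n(k-i)$ is also $\ge i$, by the sharpened form of Step 2: the bracket is actually $\ge0$ and $\beta_n(k)-\beta_n(k-i)\ge\beta_n(i+1)-1$.

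The only real content is the counting inequality in Step 2, and I expect no serious obstacle.
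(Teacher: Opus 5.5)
Your proof is correct and is essentially the paper's argument. You take $r=2^q-1$ to get $d_0-d_i\ge 2^q\bigl(\beta_n(k)+1-\beta_n(k-i)-\beta_n(i+1)\bigr)+\beta_n(i+1)-1$, show the bracket is nonnegative by splitting a $(k-1)$-set into parts of sizes $k-i-1$ and $i$ (equivalently, via Vandermonde), and use $n\ge2$ to get $\beta_n(i+1)-1\ge i$. Your closing remark on $q=0$ is harmless but unnecessary, since the general argument already covers that case.
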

\begin{proof}
Since $\beta_n(i+1)\ge1$, the difference $d_0-d_i$ is minimised when $r$
is as large as possible, i.e.\ $r=2^q-1$.  Setting $s=2^q-1$ gives
$$
d_0-d_i \ge 2^q A(i)+B(i),
$$
where
$$
A(i):=\beta_n(k)+1-\beta_n(k-i)-\beta_n(i+1),
\quad
B(i):=\beta_n(i+1)-1=\sum_{j=1}^{n-1}\binom{i}{j}.
$$

\noindent\textit{Proof that $A(i)\ge0$.}
Write $a=k-i-1$ and $b=i$, so $a+b=k-1$ and $a,b\ge0$.
For every integer $h\ge1$ the Vandermonde identity gives
$\binom{a+b}{h}\ge\binom{a}{h}+\binom{b}{h}$
(the left side counts $h$-subsets of an $(a+b)$-set; the right side
counts only those contained entirely in one of the two parts).
Summing over $h=1,\ldots,n-1$ and using $\binom{a+b}{0}=\binom{a}{0}=\binom{b}{0}=1$:
$$
\beta_n(k)=\sum_{h=0}^{n-1}\binom{k-1}{h}
\ge\sum_{h=0}^{n-1}\binom{k-i-1}{h}+\sum_{h=0}^{n-1}\binom{i}{h}-1
=\beta_n(k-i)+\beta_n(i+1)-1,
$$
hence $A(i)\ge0$.  Equality holds at $i=0$ or $i=k-1$.

\noindent\textit{Proof that $B(i)\ge i$.}
Since $n\ge2$,
$B(i)=\sum_{j=1}^{n-1}\binom{i}{j}\ge\binom{i}{1}=i$.

Combining: $d_0-d_i\ge 2^q\cdot0+i=i$.
\end{proof}

\subsubsection*{Case 1: $n=k$ (the MVZ upper bound also holds orthogonally)}

\begin{lemma}\label{L:nk}
Let $n=k\ge1$ and $m\ge1$.  Then
$$
(P_{k,k})^m \ne 0 \quad\text{in}\quad
\mathcal{R}\!\left(m+(2^{k-1}-1)2^{\lfloor\log_2 m\rfloor},\ldots\right).
$$
Consequently, $\Delta^*(m,k,k)\le m+(2^{k-1}-1)\,2^{\lfloor\log_2 m\rfloor}$.
\end{lemma}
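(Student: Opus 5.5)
We need $(P_{k,k})^m\ne0$ in $\mathcal R_{d,k}$ with $d=m+(2^{k-1}-1)2^q$, where $q=\lfloor\log_2 m\rfloor$ and $m=2^q+r$ with $0\le r<2^q$. The plan is to exhibit one monomial that survives the caps $a_i^{d-i+2}$ and has coefficient $1$ in $(P_{k,k})^m$.

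The key structural input is that for $n=k$ the polynomial $P_{k,k}=\prod_{\emptyset\ne I\subseteq[k]}\bigl(\sum_{i\in I}a_i\bigr)$ is the product of all nonzero linear forms in $\mathbb F_2$-span$\{a_1,\dots,a_k\}$. It is therefore the top Dickson invariant, equal to the Moore determinant $\det(a_i^{2^{j-1}})$ over $\mathbb F_2$, and a sum over $\sigma\in S_k$ of monomials $\prod_i a_i^{2^{\sigma(i)-1}}$. All $k!$ monomials are distinct, each has coefficient $1$, and every exponent is a power of two.

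Next, write $(P_{k,k})^m=(P_{k,k})^{2^q}\cdot(P_{k,k})^r$. By Frobenius, $(P_{k,k})^{2^q}=\sum_\sigma\prod_i a_i^{2^{q+\sigma(i)-1}}$. For the factor $(P_{k,k})^r$ we expand by the binary digits of $r<2^q$, so $(P_{k,k})^r=\prod_{b\in\mathrm{bin}(r)}\sum_\tau\prod_i a_i^{2^{b+\tau(i)-1}}$. We target the monomial
\[
M=\prod_{i=1}^k a_i^{\,2^q\,2^{k-i}+r\,2^{i-1}},
\]
which comes from $\sigma$ reversed in the $2^q$ factor and $\tau=\mathrm{id}$ in every digit of $r$. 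It matches $d_i=2^q\beta_k(k-i)+r\beta_k(i+1)$ of Lemma~\ref{L44} after reindexing, since $\beta_k(j)=2^{j-1}$. The cap for $a_{i+1}$ is $d+1-i$, and $d=d_0+?$. Indeed $d_0=2^q2^{k-1}+r=m+(2^{k-1}-1)2^q=d$. So we need $d_i\le d-i$, i.e.\ $d_0-d_i\ge i$, which is exactly Lemma~\ref{L44}. Hence $M$ survives the truncation.

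The main obstacle is to show that the coefficient of $M$ in $(P_{k,k})^m$ is odd, i.e.\ that no other choice of permutations $(\sigma,\tau_b)$ produces $M$.

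The first idea is a uniqueness argument in base $2$. The exponent of $a_i$ in a product monomial is $2^{q}2^{\sigma(i)-1}+\sum_b 2^{b}2^{\tau_b(i)-1}$. Since the total degrees agree, I would compare the exponent vectors lexicographically. In each factor the maximal lex monomial is given by $\sigma$ reversed, and the maximal exponent of $a_1$ forces the choices greedily. This is the same argument as in Lemma~\ref{L43}, but applied to the "anti-lex" order for the $2^q$ block and the lex order for the $r$ block, so the two orders conflict.

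If that fails, I would fall back on a different framing. Reduce modulo the ideal first, note that the truncated ring only allows exponents up to the caps, and perform an induction on $k$. Extracting the coefficient of the top power $a_1^{d}$ leaves $(P_{k-1,k-1}(a_2,\dots))^{m}$ times a factor of a similar shape. This is the MVZ/Dickson-style argument from \cite{MSZ}, and I expect the precise bookkeeping there to be the hardest part.

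Finally, the "consequently" clause follows from Theorem~\ref{Pknm}, since $d\ge k$.
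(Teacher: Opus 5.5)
\textbf{Gap: the coefficient of your target monomial can be even.} You aim at the same witness as the paper, $M=\prod_i a_i^{t2^{k-i}+r2^{i-1}}$ with $t=2^q$. Your survival check is correct: with $\beta_k(j)=2^{j-1}$ it is exactly Lemma~\ref{L44}, and the paper proves the same inequality by hand. The gap is the step you flag as the main obstacle, and it cannot be closed, because the coefficient of $M$ in $(P_{k,k})^m$ is not always odd.

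Write $D_k=P_{k,k}$ and take $k=3$, $m=3$, so $M=a_1^9a_2^6a_3^6$. In $D_3^3=D_3^2\cdot D_3$ each exponent vector is a permutation of $(2,4,8)$ plus a permutation of $(1,2,4)$. The $a_1$-entry $9$ forces $8+1$. The remaining $(6,6)$ then arises both as $(4,2)+(2,4)$ and as $(2,4)+(4,2)$, so the coefficient is $2\equiv 0$. The same happens for $(k,m)=(3,6)$ and $(3,7)$. So your lex/anti-lex uniqueness argument must fail. The paper's appeal to \cite{MSZ} for this particular monomial does not rescue it either, so a different witness is needed.

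\textbf{The fallback is the right route, with the correct coefficient.} Extracting $a_1^d$ does not leave $(P_{k-1,k-1})^m$ times a similar factor; the coefficient of $a_1^d$ is exactly $D_{k-1}(a_2,\dots,a_k)^{m+r}$.
\begin{itemize}
\item Collect the Moore expansion as $D_k=\sum_{j=1}^k a_1^{2^{j-1}}E_j$, where $E_k=D_{k-1}$ and $E_1=D_{k-1}^2$ in the variables $a_2,\dots,a_k$.
\item Write $D_k^m=\prod_{b\in\mathrm{bin}(m)}D_k^{2^b}$. A term then has $a_1$-exponent $\sum_b 2^{b+j_b-1}$, a sum of $1+s_2(r)$ powers of two.
\item Since $d=2^{q+k-1}+r$ has exactly $1+s_2(r)$ binary digits, these powers must be distinct and equal to those digits.
\item The inequality $b+j_b-1\ge b$ then forces $j_b=1$ for $b\in\mathrm{bin}(r)$ and $j_q=k$.
\end{itemize}
Hence the $a_1^d$-coefficient is $E_k^{2^q}E_1^{r}=D_{k-1}^{2^q+2r}$.

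It follows that $D_k^m\ne0$ in $\mathcal R_{d,k}$ whenever $D_{k-1}^{m'}\ne0$ in $\mathcal R_{d-1,k-1}$ (variables $a_2,\dots,a_k$), where $m'=m+r$. The target dimension for $(m',k-1)$ is $2^{q+k-2}+2r$ if $2r<2^q$, and $2^{q+k-1}+2r-2^q$ if $2r\ge 2^q$. Both are at most $d-1$, so induction on $k$ (base case $D_1^m=a_1^m$) closes the argument. For $(k,m)=(3,3)$ it produces the witness $a_1^9a_2^8a_3^4$, whose coefficient is $1$.
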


\begin{proof}
When $n=k$, the polynomial $P_{k,k}=Q_1Q_2\cdots Q_k$ equals the
Dickson polynomial $D_k=\prod_{\emptyset\ne I\subseteq[k]}(\sum_{i\in I}a_i)$.
By the computation preceding \cite[Theorem~4.2]{MSZ}, for $m=t+r$
with $t=2^{\lfloor\log_2 m\rfloor}$ and $0\le r<t$, the polynomial
$(D_k)^m$ contains the monomial
$$
M = \bigl(a_1^{2^{k-1}}a_2^{2^{k-2}}\cdots a_k^1\bigr)^t
    \bigl(a_1^1 a_2^2\cdots a_k^{2^{k-1}}\bigr)^r
$$
with nonzero coefficient.  The exponent of $a_i$ in $M$ is
$e_i = t\cdot 2^{k-i}+r\cdot 2^{i-1}$.
It remains to verify that $M$ survives in
$\mathcal{R}(d^*,d^*-1,\ldots,d^*-k+1)$ with $d^*=t\cdot2^{k-1}+r$,
i.e.\ that $e_i\le d^*-i+1$ for each $i=1,\ldots,k$:
\begin{align*}
d^*-i+1 - e_i
&= (t\cdot2^{k-1}+r-i+1)-(t\cdot2^{k-i}+r\cdot2^{i-1})\\
&= t(2^{k-1}-2^{k-i})+r(1-2^{i-1})-i+1.
\end{align*}
For $i=1$: $d^*-1+1-e_1 = t\cdot2^{k-1}+r - t\cdot2^{k-1} - r = 0 \ge 0$.
For $i\ge2$: set $B=2^{i-1}-1\ge1$.  Then
$2^{k-1}-2^{k-i}=2^{k-i}(2^{i-1}-1)=2^{k-i}B$ and so
\begin{align*}
d^*-i+1-e_i
&= t\cdot2^{k-i}B - r\cdot B - (i-1)\\
&\ge t\cdot2^{k-i}B - (t-1)B - (i-1)\\
&= \bigl(t(2^{k-i}-1)+1\bigr)B - (i-1)\\
&\ge B-(i-1) \ge 0,
\end{align*}
where the last inequality uses $B=2^{i-1}-1\ge i-1$ for all $i\ge1$.
Hence $M$ survives in $\mathcal{R}(d^*,\ldots,d^*-k+1)$,
and the conclusion follows from Theorem~\ref{Pknm}.
\end{proof}

\begin{remark}
This result says that the MVZ upper bound $\Delta(m,k)\le m+(2^{k-1}-1)2^{\lfloor\log_2 m\rfloor}$
from~\cite{MSZ} also serves as an upper bound for $\Delta^*(m,k)=\Delta^*(m,k,k)$.
Since $\Delta(m,k)\le\Delta^*(m,k)$ (orthogonality is an additional constraint),
both invariants share the same upper bound, so orthogonality of the $k$ hyperplanes
is achieved for free at the MVZ dimension.
\end{remark}

\subsubsection*{Case 2: $n=2$ (any $k\ge2$, any $m\ge1$)}

\begin{lemma}\label{L:n2}
Let $k\ge2$ and $m\ge1$.  With $t=2^{\lfloor\log_2 m\rfloor}$,
$r=m-t$, and $d^*=tk+r$:
$$
\Delta^*(m,k,2)\le d^* = m+(k-1)\,2^{\lfloor\log_2 m\rfloor}.
$$
For $m=2t-1$ (one less than a power of two) the bound follows from
the nonvanishing $(P_{k,2})^{2t-1}\ne0$ in
$\mathcal{R}(d^*,d^*-1,\ldots,d^*-k+1)$.
For general $m$, the bound follows from~\cite[Theorem~1.5(a)]{Mejia25}.
\end{lemma}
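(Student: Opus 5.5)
The statement has two parts. For general $m$ I simply cite \cite[Theorem~1.5(a)]{Mejia25}. The real work is the case $m=2t-1$, where I will prove directly that $(P_{k,2})^{2t-1}\ne0$ in $\mathcal{R}(d^*,d^*-1,\ldots,d^*-k+1)$ with $d^*=tk+t-1$. Theorem~\ref{Pknm} then gives the geometric bound.

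The plan follows the proof of Lemma~\ref{L:nk}, with the Dickson polynomial replaced by $P_{k,2}=Q_1Q_2$. Over $\mathbb F_2$ the Frobenius map gives $(P_{k,2})^{t}=P_{k,2}(a_1^t,\ldots,a_k^t)$. Write $(P_{k,2})^{2t-1}=(P_{k,2})^t\cdot(P_{k,2})^{t-1}$. From the first factor I take the lex-leading monomial of Lemma~\ref{L43} raised to the $t$-th power. With $\beta_2(j)=j$, this is $\prod_i a_i^{t(k-i+1)}$. From the second factor I want the "reversed'' monomial $\prod_i a_i^{(t-1)i}$, mirroring the term $(a_1a_2^2\cdots)^r$ in the MVZ computation. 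The candidate monomial is therefore
\[
M=\prod_{i=1}^k a_i^{\,t(k-i+1)+(t-1)i}.
\]
Its exponents are exactly $d_{i-1}$ of Lemma~\ref{L44} with $n=2$, $q=\lfloor\log_2 m\rfloor$ and $r=t-1$. Lemma~\ref{L44} gives $d_0-d_{i-1}\ge i-1$. Since $d_0=tk+t-1=d^*$, the exponent of $a_i$ is at most $d^*-i+1$, so $M$ survives the truncation. This settles the survival step.

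\textbf{Main obstacle.} It remains to show that $M$ has coefficient $1$ in $(P_{k,2})^{2t-1}$, not just in a favourable pair of factors. This is the step the abstract attributes to Dyson's identity. My approach is to write $Q_2$ as the Vandermonde determinant, so that $Q_2^2=\prod_{i<j}(a_i^2+a_j^2)$. Over $\mathbb F_2$, the powers $Q_2^{2t-1}$ then become products of alternants. Dyson's constant term identity (or rather its mod-$2$ specialization, Dyson's coefficient $\binom{\sum c_i}{c_1,\ldots,c_k}$) controls the coefficient of balanced monomials in $\prod_{i\ne j}(1-a_i/a_j)^{c}$.

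Concretely, $M$ divided by $Q_1^{2t-1}$ is a monomial of $Q_2^{2t-1}$ of degree $(2t-1)\binom k2$. Its exponent vector is $(t(k-i)+(t-1)(i-1))_i$, which is an affine interpolation between the staircase and its reverse. I would relate this coefficient to a multinomial coefficient $\binom{kt-?}{t,\ldots,t}$-type expression via Dyson, and check that it is odd using Lucas' theorem. This works because $t$ is a power of two, so base-$2$ digits do not carry.

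If this bookkeeping fails, the fallback is a different route. Use $Q_2^{2t-1}=Q_2^{t}\cdot Q_2^{t-1}$ together with the Schur--Vandermonde viewpoint of Lemma~\ref{lem:schur-vandermonde}. Nonvanishing then reduces to a nonzero coefficient of a single Schur function $s_\lambda$ with $\lambda\subseteq(d^*-k)^k$ after the shift. The case $t=1$, where $M=a_1^{k}\cdots a_k^{1}$ and we need $Q_1Q_2\ne0$ in $\mathcal R(k,\ldots)$, follows from Corollary~\ref{corVkk}. It provides a base check.
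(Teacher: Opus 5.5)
\textbf{Verdict: genuine gap.} Your reduction matches the paper, but the step that carries all the content, oddness of the coefficient, is not proved, and the route you sketch for it would not work as stated.

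\textbf{What is correct.} Since $t(k-i+1)+(t-1)i=t(k+1)-i$, your monomial $M=\prod_i a_i^{t(k+1)-i}$ is exactly the paper's witness. It sits on the caps of $\mathcal{R}(d^*,\ldots,d^*-k+1)$ with equality, so survival is immediate. Dividing by $Q_1^{2t-1}$, you correctly reduce to the coefficient $C$ of $\prod_i a_i^{t(k-1)-(i-1)}$ in $Q_2^{2t-1}$. The general-$m$ case by citation of \cite{Mejia25} also matches the paper.

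\textbf{What is missing.} You leave the oddness of $C$ as a ``$\binom{kt-?}{t,\ldots,t}$-type expression''. The fallback names no Schur coefficient and gives no way to compute one. So $(P_{k,2})^{2t-1}\ne0$ is not established.

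\textbf{Why your sketched route fails.} Dyson's identity gives only the constant term of $\prod_{i\ne j}(1-a_i/a_j)^t$. That is the coefficient of $\prod_i a_i^{t(k-1)}$ in the \emph{even} power $V^{2t}$, where $V=\prod_{i<j}(a_i-a_j)$. It says nothing directly about $V^{2t-1}$. The missing idea is an antisymmetrization over $\mathbb{Z}$:
\begin{itemize}
\item Write $V^{2t}=V\cdot V^{2t-1}$ and expand $V=\sum_{\sigma}\mathrm{sgn}(\sigma)\,a^{\sigma\delta}$.
\item Because $V^{2t-1}$ is antisymmetric and the target exponent $t(k-1)\mathbf{1}$ is $S_k$-invariant, each of the $k!$ terms contributes the same $\pm C$.
\item Hence $\pm\binom{kt}{t,\ldots,t}=k!\,C$, that is, $|C|=\frac{(kt)!}{k!\,(t!)^k}$.
\end{itemize}
Only after this does a $2$-adic count finish the proof. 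Legendre's formula works, or Lucas' theorem applied to the factorization $\prod_{j=1}^k\binom{jt-1}{t-1}$, each factor being odd for $t=2^q$.

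This has to be done over $\mathbb{Z}$. The Dyson constant term $\binom{kt}{t,\ldots,t}$ has $2$-adic valuation $k-s_2(k)\ge1$, so it is even for $k\ge2$. A ``mod-$2$ specialization'' of Dyson therefore gives only $0\equiv0$. Likewise, Lucas applied to a plain multinomial of this shape shows it is even. The division by $k!$ coming from the antisymmetrization is essential.

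\textbf{Minor point.} The base case $t=1$ does not follow from Corollary~\ref{corVkk}, which is a geometric zero statement. It follows from the algebraic fact $Q_2\ne0$ in $\mathcal{P}_{k,k}$ stated just before it, shifted by $Q_1$.
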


\begin{proof}
We first handle $m=2t-1$ (odd, one less than a power of two), then
deduce the general case by citation.

Over $\mathbb{Z}$, let $V=\prod_{i<j}(a_i-a_j)$ be the Vandermonde
determinant and let $C$ be the coefficient of
$\prod_i a_i^{t(k-1)-(i-1)}$ in $V^{2t-1}$.
Setting $b_i=t$ for all $i$ in Dyson's constant-term identity~\cite{GoodDyson}
and extracting the coefficient by a standard argument gives
\[
|C| = \frac{(tk)!}{k!\,(t!)^k}.
\]
To see that $|C|$ is odd, apply Legendre's formula
$\nu_2(N!)=N-s_2(N)$ where $s_2(N)$ is the sum of binary digits:
\[
\nu_2(|C|) = \nu_2((tk)!) - \nu_2(k!) - k\,\nu_2(t!)
= (tk-s_2(tk))-(k-s_2(k))-k(t-1) = 0,
\]
since $t=2^q$ implies $s_2(tk)=s_2(k)$.  Hence $C$ is odd.

Reducing mod~$2$, $V$ becomes $Q_2$, so the monomial
$\prod_i a_i^{t(k-1)-(i-1)}$ has coefficient~$1$ in $(Q_2)^{2t-1}$.
Multiplying by $Q_1^{2t-1}=(a_1\cdots a_k)^{2t-1}$, the staircase
monomial
$$
a_1^{t(k+1)-1}\cdot a_2^{t(k+1)-2}\cdots a_k^{t(k+1)-k}
$$
has coefficient~$1$ in $(P_{k,2})^{2t-1}$, and its exponents satisfy
the caps $(t(k+1)-1,t(k+1)-2,\ldots,t(k+1)-k)$.

For general $m=t+r$ with $0\le r<t$, the geometric upper bound
$\Delta^*(m,k,2)\le tk+r$ follows from \cite[Theorem~1.5(a)]{Mejia25},
proved there via Theorem~3.2 using an equivariant map on a product of
spheres with orthogonality equations appended.
This proves the asserted upper bound for general $m$.
\end{proof}

\subsubsection*{Case 3: $m=2^q$ a power of two (any $2\le n\le k$)}

\begin{lemma}\label{L:pow2}
Let $d^*=2^q\beta_n(k)$.  Then
$(P_{k,n})^{2^q}\ne0$ in $\mathcal{P}_{d^*,k}$,
and consequently $\Delta^*(2^q,k,n)\le 2^q\beta_n(k)$.
\end{lemma}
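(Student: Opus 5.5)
The plan is to use the Frobenius endomorphism of $\mathbb F_2[a_1,\ldots,a_k]$ together with the leading-monomial Lemma~\ref{L43}.

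\textbf{Step 1: pass to a single monomial.} In characteristic two, $x\mapsto x^{2^q}$ is a ring endomorphism, so
\[
(P_{k,n})^{2^q}=P_{k,n}(a_1^{2^q},\ldots,a_k^{2^q}).
\]
Consequently the monomials of $(P_{k,n})^{2^q}$ are exactly $M^{2^q}$ for the monomials $M$ of $P_{k,n}$, with the same coefficients. Distinct $M$ give distinct $M^{2^q}$, so no cancellation can occur. By Lemma~\ref{L43}, $M_0$ has coefficient $1$ in $P_{k,n}$. Hence
\[
M_0^{2^q}=\prod_{i=1}^k a_i^{2^q\beta_n(k-i+1)}
\]
appears in $(P_{k,n})^{2^q}$ with coefficient $1$.

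\textbf{Step 2: check that $M_0^{2^q}$ survives the truncation.} A monomial survives in the quotient ring exactly when each exponent lies below the corresponding cap, and distinct surviving monomials stay linearly independent. So it suffices to show that $M_0^{2^q}$ survives; it then cannot cancel and $(P_{k,n})^{2^q}\neq0$.

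Take the caps appropriate for Theorem~\ref{Pknm}, namely $\mathcal R_{d^*,k}$, which is $a_i^{d^*-i+2}=0$. The required inequalities are
\[
2^q\beta_n(k-i+1)\le d^*-i+1,\qquad i=1,\ldots,k.
\]
Apply Lemma~\ref{L44} with $r=0$ and index $i-1$, so $d_{i-1}=2^q\beta_n(k-i+1)$ and $d_0=d^*$. It gives $d^*-d_{i-1}\ge i-1$, which is exactly the inequality above.

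The statement as written uses $\mathcal P_{d^*,k}$, whose caps are one lower: $a_i^{d^*-i+1}=0$. That would require $d_{i-1}\le d^*-i$. For $i=1$ this fails, since the exponent of $a_1$ equals $d^*$. So I would interpret the intended ring as $\mathcal R_{d^*,k}$. Equivalently, in staircase notation, the condition is $\mathcal R(d^*,d^*-1,\ldots,d^*-k+1)$, as in Lemma~\ref{L:nk}.

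\textbf{Step 3: conclude.} Theorem~\ref{Pknm} with $m=2^q$ and $d=d^*=2^q\beta_n(k)$ now gives $\Delta^*(2^q,k,n)\le 2^q\beta_n(k)$. This agrees with (1.4), since $m+(\beta_n(k)-1)2^{q}=2^q\beta_n(k)$.

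The main point requiring care is matching the cap convention, i.e.\ confirming that the off-by-one slack at $i=1$ is provided by $\mathcal R_{d^*,k}$. Lemma~\ref{L44} with $r=0$ handles all the remaining indices.
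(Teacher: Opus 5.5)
Your proof is correct and is essentially the paper's own argument. Frobenius reduces everything to the monomial $M_0^{2^q}$, which has coefficient $1$ by Lemma~\ref{L43}; Lemma~\ref{L44} with $r=0$ supplies the cap inequalities $2^q\beta_n(k-i+1)\le d^*-i+1$; and Theorem~\ref{Pknm} then gives the bound. Your reading of the ring as $\mathcal{R}_{d^*,k}$ is exactly what the paper's proof verifies, and the correction is genuinely needed: for $k=n=2$ one has $(P_{2,2})^{2^q}=a_1^{2^{q+1}}a_2^{2^q}+a_1^{2^q}a_2^{2^{q+1}}$, which vanishes in $\mathcal{P}_{2^{q+1},2}$.
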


\begin{proof}
By the Frobenius endomorphism of $\mathbb{F}_2[a_1,\ldots,a_k]$,
$(P_{k,n})^{2^q}$ has the same support as $\{M^{2^q}:M\in\mathrm{supp}(P_{k,n})\}$
without cancellation.  The leading monomial $M_0=a_1^{\beta_n(k)}\cdots a_k^{\beta_n(1)}$
of $P_{k,n}$ (Lemma~\ref{L43}) gives $M_0^{2^q}=a_1^{2^q\beta_n(k)}\cdots a_k^{2^q\beta_n(1)}$
with coefficient~$1$ in $(P_{k,n})^{2^q}$.
By Lemma~\ref{L44} with $r=0$, the exponent $2^q\beta_n(k-i+1)$
satisfies $2^q\beta_n(k-i+1)\le 2^q\beta_n(k)-i+1=d^*-i+1$ for each $i$,
so $M_0^{2^q}$ survives in $\mathcal{R}_{d^*,k}$.
By Theorem~\ref{Pknm}, $\Delta^*(2^q,k,n)\le d^*$.
\end{proof}

\noindent{\bf Proof of the forward doubling inequality $d^*(2m,k,n)\le 2\,d^*(m,k,n)$.}\label{thm:dstar-double}

\begin{proof}
Over $\mathbb{F}_2$ the Frobenius endomorphism gives
$(P_{k,n})^{2m}=((P_{k,n})^m)^2$ with no cross-terms: every monomial
of $(P_{k,n})^{2m}$ has the form $a^{2e}$ where $a^e$ is a monomial
of $(P_{k,n})^m$, with the same coefficient.

\textit{Forward direction ($d^*(2m)\le 2d^*(m)$).}
The cap at position $i$ ($1\le i\le k$) in $\mathcal{R}_{d,k}$ is
$d-i+1$, so $a^e$ survives iff $e_i\le d-i+1$ for all $i=1,\ldots,k$.
If $a^e$ survives in $\mathcal{R}_{d,k}$, i.e.\ $e_i\le d-i+1$,
then $2e_i\le 2d-2i+2\le 2d-i+1$ (since $i\ge1$),
so $a^{2e}$ survives in $\mathcal{R}_{2d,k}$.
Hence $(P_{k,n})^m\ne0$ in $\mathcal{R}_{d,k}$
implies $(P_{k,n})^{2m}\ne0$ in $\mathcal{P}_{2d,k}$,
giving $d^*(2m,k,n)\le 2\,d^*(m,k,n)$.

\end{proof}

\begin{remark}
The reverse inequality $d^*(2m)\ge2\,d^*(m)$ does not follow from this argument.
Indeed, from $2e_i\le D-i+1$ one deduces only $e_i\le(D-i+1)/2$, which does
not in general imply $e_i\le\lfloor D/2\rfloor-i+1$
(e.g.\ $D=6$, $i=3$, $e_i=2$: $2e_i=4\le D-i+1=4$ but $e_i=2>\lfloor D/2\rfloor-i+1=0$).
\end{remark}

\medskip

\begin{proof}[Proof of Theorem~\ref{th15}]
The lower bound~(1.3) is Lemma~\ref{L41}.

Cases~(i) ($n=k$) and~(iii) ($m=2^q$): the nonvanishing of $(P_{k,n})^m$ in
$\mathcal{R}(d^*,\ldots,d^*-k+1)$ is established in
Lemmas~\ref{L:nk} and~\ref{L:pow2} respectively, and
Theorem~\ref{Pknm} gives the upper bound.

Case (ii), $m=2t-1$: the nonvanishing $(P_{k,2})^{2t-1}\ne0$ is
established in Lemma~\ref{L:n2}, and Theorem~\ref{Pknm} gives the
upper bound.

Case (ii), general $m$: the upper bound $\Delta^*(m,k,2)\le tk+r$
follows directly from~\cite[Theorem~1.5(a)]{Mejia25} as stated in
Lemma~\ref{L:n2}; here Theorem~\ref{Pknm} is not used.

The open case is addressed in Remark~\ref{rem:general-m}.
\end{proof}

\begin{remark}\label{rem:general-m}
For $2<n<k$ and $m$ not a power of two, the nonvanishing
$(P_{k,n})^m\ne0$ in $\mathcal{P}_{d^*,k}$ with $d^*=t\beta_n(k)+r$
is computationally verified for $k\le5$, $m\le11$, but a general proof
is open.  The monomial $M_0^t\cdot M_1^r$ does not always provide the
required witness: at $(k,n,m)=(5,3,3)$ its coefficient in $(P_{5,3})^3$
is zero (22 contributions cancel mod~2), though other monomials survive.
\end{remark}

\begin{proof}[Proof of Theorem~\ref{th17}]
\noindent{\em Upper bound.}
Suppose $(P_{k,n})^m\ne0$ in $\mathcal{R}(D,D-1,\ldots,D-k+1)$.

We work in dimension $d=D+m$ and construct a $B_k$-equivariant map
on $V_{d,k}$ whose zero gives the required centroid-passing
equipartition.  Fix admissible measures $\mu_1,\ldots,\mu_m$ with
finite first moments and smooth positive densities (the general case
follows by Gaussian approximation as in the proof of Theorem~\ref{Pknm}).

For $\mathbf{u}=(u_1,\ldots,u_k)\in V_{d,k}$, the centroid-bisection
condition for measure $\mu_\ell$ with respect to hyperplane $H_i$ is
\[
 C_{\ell,i}(\mathbf{u}) = \int_{\mathbb{R}^d}
 \bigl(\langle u_i,x\rangle - \langle u_i,c_\ell\rangle\bigr)
 \rho_\ell(x)\,dx = 0,
\]
where $c_\ell$ is the centroid of $\mu_\ell$.  This is a linear
function of $u_i$ that changes sign under $u_i\mapsto-u_i$, so it
belongs to the $\mathbb{R}_{\lambda_i}$ summand of $W_1^+$.
These $mk$ centroid conditions together with the $m\cdot\alpha_n(k)$
Walsh conditions define a continuous $B_k$-equivariant map
\[
 F: V_{d,k} \longrightarrow
 (W_1^+)^{\oplus(2m-1)} \oplus \bigoplus_{j=2}^n (W_j^+)^{\oplus m},
\]
where the $(2m-1)$ copies of $W_1^+$ account for: $mk$ centroid
conditions ($m$ per each of $k$ hyperplanes, giving $m$ copies of
$W_1^+$) and the $m-1$ singleton bisection conditions for
$\mu_2,\ldots,\mu_m$ (with the singleton bisection of $\mu_1$ absorbed
into the domain via the median map, as in the proof of Theorem~\ref{Pknm}).

The Euler polynomial of $F$ in $\mathbb{F}_2[a_1,\ldots,a_k]$ is
\[
 e_{A_k}(F) = Q_1^{2m-1}(Q_2\cdots Q_n)^m
 = Q_1^{m-1} \cdot (Q_1 P_{k,n})^m / Q_1^{m-1}
 = Q_1^{m-1} \cdot \frac{(Q_1 P_{k,n})^m}{Q_1^{m-1}}.
\]
More directly: since $Q_1=a_1\cdots a_k$ divides $P_{k,n}$,
multiplication by $Q_1^m$ is injective from $\mathcal{R}(D,\ldots,D-k+1)$
into $\mathcal{R}(D+m,\ldots,D+m-k+1)=\mathcal{R}(d,\ldots,d-k+1)$.
Hence
\[
 Q_1^{2m-1}(Q_2\cdots Q_n)^m
 = Q_1^{m-1}\cdot(Q_1 P_{k,n})^m/Q_1^{m-1}
 \ne 0 \quad\text{in}\quad \mathcal{R}_{d,k}.
\]
Theorem~\ref{thm:main-zero} therefore applies to $F$, giving a zero
$\mathbf{u}^0\in V_{d,k}$.  At this zero, Theorem~\ref{th34} yields the
$n$-wise equipartition; the centroid conditions $C_{\ell,i}=0$ ensure
every hyperplane passes through all $m$ centroids.
This gives $\Delta^\oplus(m,k,n)\le D+m$.

\medskip

\noindent{\em Lower bound: $\Delta^\oplus(m,k,n)\ge L+m$.}
We adapt the proof of Lemma~\ref{L41} by adding centroid-incidence
conditions.  The configuration space is $X = V_{d,k}\times\mathbb{R}^k$
with $\dim X=dk-k(k-1)/2$, and we impose $N=m(a+k)$ conditions: the
$ma=m\alpha_n(k)$ Walsh conditions and $mk$ centroid-incidence conditions
(for measures with finite first moments)
$$
C_{\ell,i}(\rho,z)=\int_{\mathbb{R}^d}(\langle u_i,x\rangle-t_i)\,\rho_\ell(x)\,dx=0.
$$

For the Walsh perturbations use $h^z_I$ as in Lemma~\ref{L41}.
For the centroid directions define
$$
b^z_i(x)=\frac{\eta(x-x_+-\varepsilon u_i)-\eta(x-x_++\varepsilon u_i)}{2\varepsilon},
\quad \varepsilon=\tfrac14,
$$
with $x_+=\sum_j(t_j+1)u_j$.  Then $\int b^z_i\,dx=0$,
$\int(\langle u_j,x\rangle-t_j)\,b^z_i\,dx=\delta_{ij}$, and
$\int x\,b^z_i\,dx=u_i$ (nonzero first moment, so perturbing by
$c\,b^z_i$ shifts the centroid of $\mu_\ell$ by $c\,u_i$ and shifts
$C_{\ell,i}$ by $c$; the condition $C_{\ell,i}=0$ records that the
hyperplane passes through the \emph{current} centroid).
The two translated bumps defining $b^z_i$ both lie in the same open
orthant determined by $z$ (their distance to each hyperplane $H_j(z)$
is at least $1-\varepsilon-1/8>0$), so $\phi_J(z,x)=+1$ on both supports.
Hence $\int\phi_J(z,x)b^z_i(x)\,dx=\int b^z_i\,dx=0$ for every
Walsh coordinate $J$.
Setting $\hat h^z_{\{i\}}:=h^z_{\{i\}}-b^z_i$ and keeping $h^z_I$
for $|I|\ge2$, the joint family
$\{\hat h^z_I\}\cup\{b^z_1,\ldots,b^z_k\}$
has block-diagonal identity Jacobian for the full $(a+k)$ coordinates.
The Sard argument of Lemma~\ref{L41} with $N=m(a+k)$ shows that
admissible measures exist for which no configuration satisfies all
conditions when $N>\dim X$, i.e.\ when
$$
d < \frac{m\alpha_n(k)}{k}+\frac{k-1}{2}+m.
$$
Hence
$$
\Delta^\oplus(m,k,n)\ge \left\lceil\frac{m\alpha_n(k)}{k}+\frac{k-1}{2}+m\right\rceil = L+m.
$$

When $D=L$ (the bounds on $\Delta^*$ coincide, as in
Theorem~\ref{th15b} for $n=2$, $m=2^j-1$),
$\Delta^\oplus(m,k,n)=L+m=\Delta^*(m,k,n)+m$.
\end{proof}

\medskip

\noindent{\bf Proof of the second assertion of Theorem~\ref{th17}}\label{th18}

\begin{proof}
Set $n=2$, $m=2^j-1$, $t=2^{j-1}$.
By Theorem~\ref{th15b}, $\Delta^*(2^j-1,k,2)=2^{j-1}(k+1)-1=L$.
The upper bound of Theorem~\ref{th17} with $D=L$ gives
$\Delta^\oplus(2^j-1,k,2)\le L+(2^j-1)=2^{j-1}(k+1)-1+(2^j-1)=2^{j-1}(k+3)-2$.
The lower bound of Theorem~\ref{th17} gives
$\Delta^\oplus(2^j-1,k,2)\ge L+(2^j-1)=2^{j-1}(k+3)-2$.
Hence $\Delta^\oplus(2^j-1,k,2)=2^{j-1}(k+3)-2$.
\end{proof}


 \medskip

 O. R. Musin,  University of Texas Rio Grande Valley, School of Mathematical and
 Statistical Sciences, One West University Boulevard, Brownsville, TX, 78520, USA.

 {\it E-mail address:} oleg.musin@utrgv.edu

\end{document}